\newtheorem{Theorem}{Theorem}[section]
\newtheorem{Lemma}[Theorem]{Lemma}
\newtheorem{Proposition}[Theorem]{Proposition}
\newtheorem{Corollary}[Theorem]{Corollary}
\newtheorem{Example}[Theorem]{Example}
\newtheorem{Remark}[Theorem]{Remark}
\newtheorem{Definition}[Theorem]{Definition}
\newtheorem{Notation}[Theorem]{Notation}
\newtheorem*{Theorem A}{Theorem A}
\newcommand*{\overbar}[1]{\mkern 1.5mu\overline{\mkern-1.5mu#1\mkern-1.5mu}\mkern 1.5mu}
\begin{document}
\author{Charlie Beil}
 \address{Institut f\"ur Mathematik und Wissenschaftliches Rechnen, Universit\"at Graz, Heinrichstrasse 36, 8010 Graz, Austria.}
 \email{charles.beil@uni-graz.at}
 \title[Dimer algebras, ghor algebras, and cyclic contractions]{Dimer algebras, ghor algebras,\\ and cyclic contractions}
 \keywords{Dimer algebra, dimer model, Jacobian algebra, quiver with potential, quiver gauge theory.}
 \subjclass[2020]{16G20, 16S38, 16S50}
 \date{}

\begin{abstract}
A ghor algebra is the path algebra of a dimer quiver on a surface, modulo relations that come from the perfect matchings of its quiver.
Such algebras arise from abelian quiver gauge theories in physics.
We show that a ghor algebra $\Lambda$ on a torus is a dimer algebra (a quiver with potential) if and only if it is noetherian, and otherwise $\Lambda$ is the quotient of a dimer algebra by homotopy relations.
Furthermore, we classify the simple $\Lambda$-modules of maximal dimension and give an explicit description of the center of $\Lambda$ using a special subset of perfect matchings. 
In our proofs we introduce formalized notions of Higgsing and the mesonic chiral ring from quiver gauge theory. 
\end{abstract}

\maketitle

\section{Introduction}

Let $k$ be an algebraically closed field.
A dimer quiver $Q$ is a quiver that embeds in a compact surface $\Sigma$ such that each connected component of $\Sigma \setminus Q$ is simply connected and bounded by an oriented cycle, called a unit cycle.
A perfect matching $x$ of $Q$ is a subset of arrows such that each unit cycle contains precisely one arrow in $x$.
We will often assume that $Q$ is nondegenerate, that is, each arrow is contained in at least one perfect matching.
One can imagine associating a different color to each perfect matching, and coloring each arrow $a$ by all the perfect matchings that contain $a$.  In this article we introduce the \textit{ghor algebra} of $Q$, which is a quotient of the path algebra $kQ$ where paths with coincident heads and coincident tails are deemed equal if they have the same coloring.

More precisely, let $\mathcal{P}$ be the set of perfect matchings of $Q$, and let $n := |Q_0|$ be the number of vertices of $Q$.
Consider the algebra homomorphism from $kQ$ to the $n \times n$ matrix ring over the polynomial ring generated by $\mathcal{P}$,
\begin{equation*} \label{eta}
\eta: kQ \to M_n\left(k[\mathcal{P}] \right),
\end{equation*}
defined on the vertices $e_i \in Q_0$ and arrows $a \in Q_1$ by
\begin{equation} \label{eta def}
e_i \mapsto e_{ii} \ \ \ \text{ and } \ \ \ a \mapsto e_{\operatorname{h}(a),\operatorname{t}(a)} \prod_{\substack{x \in \mathcal{P}:\\ x \ni a}} x,
\end{equation}
and extended multiplicatively and $k$-linearly to $kQ$.
The ghor algebra of $Q$ is the quotient
$$\Lambda := kQ/\operatorname{ker} \eta.$$
$\Lambda$ may be viewed as a tiled matrix ring by identifying it with its induced image under $\eta$.
In companion articles we will find that ghor algebras, although often nonnoetherian, have rich geometric, algebraic, and homological properties \cite{B2,B3,B5,B6}.\footnote{`Ghor' is Klingon for surface.}

Ghor algebras generalize cancellative dimer algebras.
The \textit{dimer algebra} of a dimer quiver $Q$ is the superpotential algebra $A = kQ/I$, where $I$ is the ideal
\begin{equation} \label{I def}
I := \left\langle p-q \ | \ \exists a \in Q_1 \text{ s.t.\ } ap,aq \text{ are unit cycles} \right\rangle \subset kQ.
\end{equation}
Dimer algebras originated in string theory \cite{HK, F-W}, and have found wide application to many areas of mathematics (e.g., \cite{BKM, Br, IU, F-V, H, IN}).
Although dimer algebras describe a class of abelian superconformal quiver gauge theories, it is really ghor algebras--and not dimer algebras--that describe abelian quiver gauge theories which are not superconformal.
As we will show, ghor algebras are quotients of superpotential algebras, and the additional relations arise from the assumption that the gauge group is abelian.

We consider the following questions for dimer quivers $Q$ on a torus:
\begin{itemize}
 \item \textit{What is a minimal subset of perfect matchings $\mathcal{P}_0 \subseteq \mathcal{P}$ with the property that the algebra homomorphism
\begin{equation*} \label{P'}
\tau: kQ \to M_n\left(k[ \mathcal{P}_0] \right),
\end{equation*}
defined by (\ref{eta def}) with $\mathcal{P}_0$ in place of $\mathcal{P}$, satisfies $kQ/\ker \eta \cong kQ/ \ker \tau$?}
 \item \textit{How is the ghor algebra $\Lambda$ related to the dimer algebra $A$?}
 \item \textit{How is the center and representation theory of $\Lambda$ related to the perfect matchings in $\mathcal{P}_0$?}
\end{itemize}

Typically $Q$ contains \textit{non-cancellative pairs} of paths, which are distinct paths $p,q$ in $A$ with the property that there is some path $r$ satisfying
$$pr = qr \not = 0 \ \ \ \text{ or } \ \ \ rp = rq \not = 0.$$
If $A$ has a non-cancellative pair, then $A$ and $Q$ are said to be \textit{non-cancellative}, and otherwise $A$ and $Q$ are \textit{cancellative}.
A fundamental characterization of this property is that $A$ is cancellative if and only if $A$ is noetherian \cite[Theorem 1.1]{B3}.

To address our questions, we use the operation of \textit{cyclic contractions} introduced in \cite{B2}, which formalizes and augments the operation of Higgsing in quiver gauge theories.
A cyclic contraction is a $k$-linear map of dimer algebras
$$\psi: A = kQ/I \to A' = kQ'/I',$$
where $Q'$ is cancellative and is obtained by contracting a set of arrows in $Q$ such that the cycles in $Q$ are suitably preserved (Definition \ref{cyclic contraction}).
Cyclic contractions are useful because they allow non-cancellative dimer algebras to be studied by relating them to well-understood cancellative dimer algebras that share similar structure.
Moreover, cyclic contractions exist for all nondegenerate dimer algebras \cite[Theorem 1.1]{B1}.
Examples of cyclic contractions are given in Figure \ref{deformation figure}.

A perfect matching $x \in \mathcal{P}$ is called \textit{simple} if there is a cycle in $Q \setminus x$ that passes through each vertex of $Q$; in this case, $Q \setminus x$ supports a simple $\Lambda$-module (and simple $A$-module) of dimension vector $1^{Q_0}$.
Let $\mathcal{S}$ be the set of simple matchings of $Q$.
We will show that if $Q$ is cancellative, then we may take $\mathcal{P}_0$ to be the simple matchings $\mathcal{S}$.
However, in general $\mathcal{P}_0$ cannot equal $\mathcal{S}$ since there are (nondegenerate) dimer quivers for which $\mathcal{S} = \emptyset$; see Figure \ref{deformation figure}.i.
One may ask if $\mathcal{P}_0$ is simply a minimal set of perfect matchings such that each arrow of $Q$ is contained in some $x \in \mathcal{P}_0$.
To the contrary: for our choice of $\mathcal{P}_0$, there will always be arrows which are not contained in any $x \in \mathcal{P}_0$ whenever $Q$ is non-cancellative.

Our main theorem is the following.

\begin{Theorem} \label{main} (Theorems \ref{first main} and \ref{impression prop}.)
Let $Q$ be a nondegenerate dimer quiver on a torus, and fix a cyclic contraction $\psi: A \to A'$.
Let $\mathcal{P}_0 \subseteq \mathcal{P}$ be the perfect matchings $x$ of $Q$ for which $\psi(x)$ is a simple matching of $Q'$.
Set $B := k[\mathcal{P}_0]$.
\begin{enumerate}
 \item Let $\tau_{\psi}: kQ \to M_n(B)$ be the algebra homomorphism defined in (\ref{eta def}) with $\mathcal{P}_0$ in place of $\mathcal{P}$.
There are algebra isomorphisms
$$\begin{array}{rcl}
\Lambda := kQ/\ker \eta & \cong & kQ/\ker \tau_{\psi} \\
& = & A/\left\langle p-q \ | \ p,q \text{ is a non-cancellative pair} \right\rangle.
\end{array}$$
 \item Suppose $k$ is uncountable.
The induced homomorphism $\tau_{\psi}: \Lambda \to M_n(B)$ classifies all simple $\Lambda$-module isoclasses of maximal $k$-dimension: for each such module $V$, there is a maximal ideal $\mathfrak{b} \in \operatorname{Max}B$ such that
$$V \cong (B/\mathfrak{b})^n,$$
 where $av := \tau_{\psi}(a)v$ for each $a \in \Lambda$, $v \in (B/\mathfrak{b})^n$.
 \item The centers of $\Lambda$ and $\Lambda' \cong A'$ are given by the intersection and union of the vertex corner rings of $\Lambda$,
 $$Z(\Lambda) \cong k\left[ \cap_{i \in Q_0} \bar{\tau}_{\psi}\left( e_i \Lambda e_i \right) \right] \subseteq k\left[ \cup_{i \in Q_0} \bar{\tau}_{\psi}\left( e_i\Lambda e_i \right) \right] \cong Z(\Lambda').$$
\end{enumerate}
\end{Theorem}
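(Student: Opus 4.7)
The overall strategy is to realize $\Lambda$ concretely as a tiled subring of $M_n(B)$ via the injection $\tau_{\psi}$, and to translate all three statements into questions about this tiled image, using the cyclic contraction $\psi: A \to A'$ to transport well-understood perfect-matching data from the cancellative algebra $A' \cong \Lambda'$ back to the potentially nonnoetherian $\Lambda$.

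For part (1), the inclusion $\ker \eta \subseteq \ker \tau_{\psi}$ is immediate from $\mathcal{P}_0 \subseteq \mathcal{P}$. The reverse inclusion is the content of showing that $\mathcal{P}_0$ is colorwise separating on parallel paths: given two parallel paths $p,q$ with identical $\mathcal{P}_0$-colorings, I would push them to $\psi(p), \psi(q)$, which inherit equal simple-matching colorings in $Q'$ and therefore coincide in $A'$ by the cancellative theory, and then lift this equality back to an equality of full $\mathcal{P}$-colorings in $Q$. For the identification with $A/\langle p-q \mid (p,q) \text{ non-cancellative} \rangle$, the superpotential relations sit in $\ker \eta$ because two unit cycles completing a common arrow carry identical multisets of perfect matchings; a non-cancellative pair $(p,q)$ with $pr = qr \neq 0$ also lies in $\ker \tau_{\psi}$, since the nonzero entries of $\tau_{\psi}(r)$ live in the integral domain $B$ and $\tau_{\psi}(p)\tau_{\psi}(r) = \tau_{\psi}(q)\tau_{\psi}(r)$ forces $\tau_{\psi}(p) = \tau_{\psi}(q)$. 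The remaining direction asks that every element of $\ker \tau_{\psi}$ beyond $I$ arise from a non-cancellative pair, which I would prove by using the cyclic contraction to exhibit, for any such $(p,q)$, a witness path $r$ with $pr = qr \neq 0$ in $A$.

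For part (2), the plan is to verify that $\tau_{\psi}$ is an \emph{impression}: over a cofinite subset of $\operatorname{Max} B$, reduction modulo $\mathfrak{b}$ makes the composition $\Lambda \to M_n(B) \to M_n(B/\mathfrak{b}) = M_n(k)$ surjective, producing a simple $\Lambda$-module structure on $(B/\mathfrak{b})^n$ of the maximal possible $k$-dimension $n$. The surjectivity after localization reduces to finding, for each $i,j$, a path representative whose image becomes a unit multiple of $e_{ij}$ modulo $\mathfrak{b}$; this uses part (1) together with the uncountability of $k$ to avoid the countably many degeneration loci. Conversely, any simple module of maximal dimension is annihilated by a maximal ideal of the image of the center of $\Lambda$, which by the following step is identified (up to a negligible locus) with $B$.

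For part (3), the center of a tiled matrix subring of $M_n(B)$ is computed diagonally: a central element must project to the same scalar in every vertex corner, giving $Z(\Lambda) \cong \bigcap_{i \in Q_0} \bar{\tau}_{\psi}(e_i \Lambda e_i)$. The analogous diagonal computation applied to the impression of $A'$, combined with the fact that $\psi$ contracts only arrows outside every $D \in \mathcal{P}_0$ and therefore identifies each $e_i \Lambda e_i$ with $e_{\psi(i)} \Lambda' e_{\psi(i)}$ as a $B$-subalgebra, upgrades the intersection to a union and yields $Z(\Lambda') \cong \bigcup_{i \in Q_0} \bar{\tau}_{\psi}(e_i \Lambda e_i)$. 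The main obstacle I expect is the colorwise-separation step in part (1): when $Q$ has no simple matchings of its own, there is no direct separating family in $Q$ itself, and one must carefully transport separating data from $Q'$ through the contraction while controlling the nonnoetherian pathologies of $\Lambda$ that can otherwise create hidden identifications between paths.
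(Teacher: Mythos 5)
Your part (1) follows essentially the paper's route: the paper packages it as the four-way equivalence of Lemma \ref{r in T'} (namely $\psi(p)=\psi(q)$ iff $p,q$ is a non-cancellative pair iff $\bar{\tau}_{\psi}(p)=\bar{\tau}_{\psi}(q)$ iff $\bar{\eta}(p)=\bar{\eta}(q)$), with the witness path $r$ being an explicit power of the unit cycle $\sigma_i$ supplied by Lemma \ref{here2}.1. The one step you elide is that before any such witness can be produced one must know that $\bar{\tau}_{\psi}(p)=\bar{\tau}_{\psi}(q)$ forces the lifts $p^+$ and $q^+$ to have coincident endpoints; this is Lemma \ref{coincident}, which rests on the Euler-characteristic argument (Lemma \ref{positive length cycle}) that a contraction to a dimer algebra with a perfect matching cannot collapse a nontrivial cycle to a vertex. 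That is the point where the ``hidden identifications'' you worry about are actually excluded.

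The genuine gaps are in (2) and (3). In (2), uncountability of $k$ is not needed where you invoke it: surjectivity of $\Lambda\to M_n(B/\mathfrak{b})$ holds for every $\mathfrak{b}$ in the complement of the single closed set $\mathcal{Z}(\sigma)$, with no countability issue. Where uncountability is indispensable is in showing that a simple $\Lambda$-module of maximal $k$-dimension has dimension vector $1^{Q_0}$ at all. Your proposed converse --- that a maximal-dimension simple is annihilated by a maximal ideal of (the image of) the center, which you then compare to $B$ --- tacitly assumes $\Lambda$ is module-finite over its center; that is exactly what fails in the non-cancellative, nonnoetherian case the theorem is aimed at (and $\tau_{\psi}(Z(\Lambda))$ is in general much smaller than $B$, e.g.\ $k+\sigma S$). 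The paper instead observes that $e_iV$ is a simple module over the commutative, countably generated algebra $e_i\Lambda e_i$, hence $\operatorname{dim}_k e_iV\leq 1$ over an uncountable algebraically closed field, and then cites \cite[Proposition 3.10, Theorem 3.11]{B2} for the classification of the $1^{Q_0}$-dimensional simples. In (3), your assertion that $\psi$ identifies each $e_i\Lambda e_i$ with $e_{\psi(i)}\Lambda' e_{\psi(i)}$ as $B$-subalgebras is false in general: the restriction of $\psi$ is injective but not surjective on corner rings, and the upgrade from intersection to union does not follow from it. What the paper actually uses is (a) the defining property of a \emph{cyclic} contraction, $k[\cup_i\bar{\tau}_{\psi}(e_iAe_i)]=k[\cup_i\bar{\tau}(e_iA'e_i)]$, which is a hypothesis rather than something to be derived, and (b) Proposition \ref{i=j}, which shows that for the cancellative $A'$ all vertex corner rings have the same $\bar{\tau}$-image and equal $Z(A')e_i$, so that union and intersection coincide on the $A'$ side.
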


To define the ghor algebra $\Lambda$ it thus suffices to keep only those perfect matchings $x \in \mathcal{P}$ for which $\psi(x)$ is a simple matching of $Q'$.

Ghor algebras are contrasted with Broomhead's construction of toric algebras \cite[5.1]{Br}, which are also based on dimer quivers, in Remark \ref{Broomhead}.

\begin{figure}
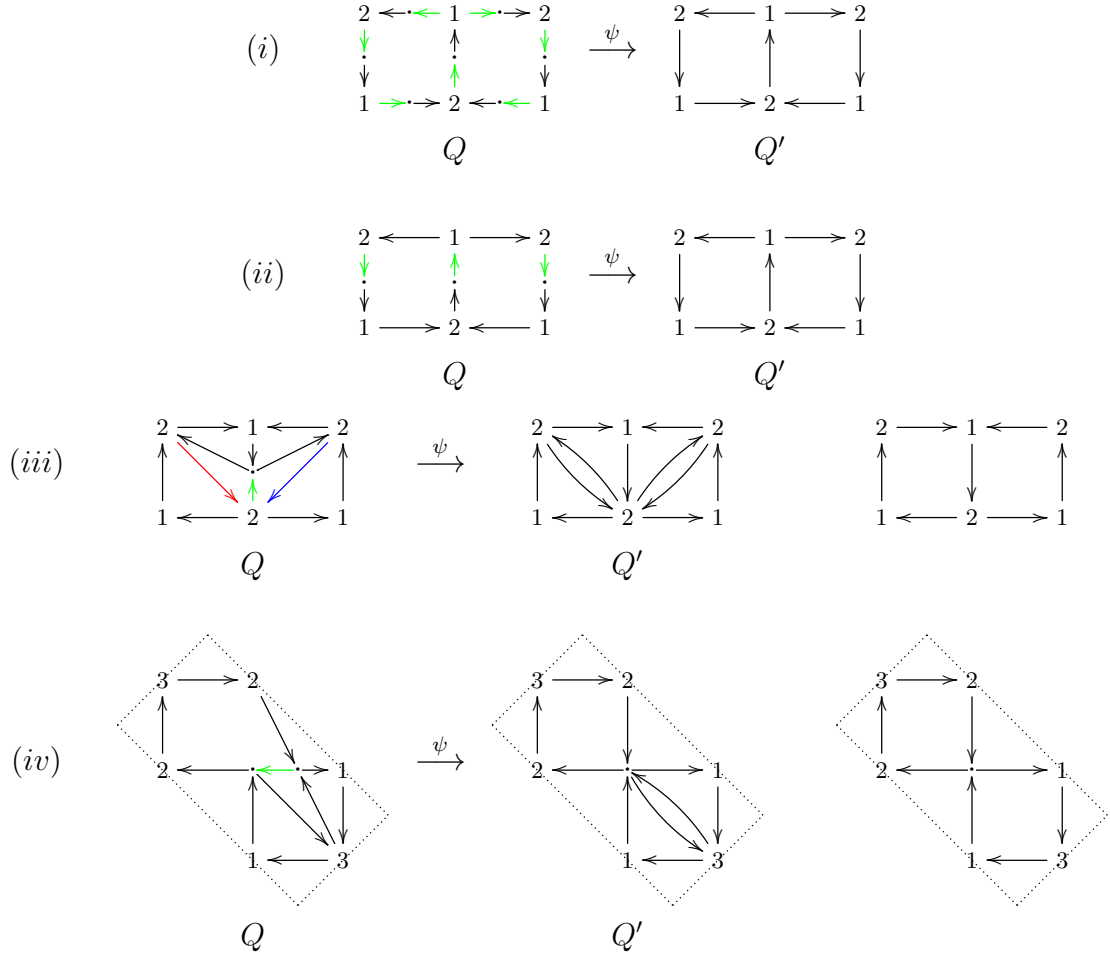

$$\begin{array}{ccccc}
(i) & \ \ & \xy
(-12,6)*+{\text{\scriptsize{$2$}}}="1";(0,6)*+{\text{\scriptsize{$1$}}}="2";(12,6)*+{\text{\scriptsize{$2$}}}="3";
(-12,-6)*+{\text{\scriptsize{$1$}}}="4";(0,-6)*+{\text{\scriptsize{$2$}}}="5";(12,-6)*+{\text{\scriptsize{$1$}}}="6";
(-12,0)*{\cdot}="7";(0,0)*{\cdot}="8";(12,0)*{\cdot}="9";
(-6,6)*{\cdot}="10";(6,6)*{\cdot}="11";(-6,-6)*{\cdot}="12";(6,-6)*{\cdot}="13";
{\ar@[green]"2";"10"};{\ar"10";"1"};{\ar^{}"7";"4"};{\ar@[green]"4";"12"};{\ar"12";"5"};{\ar@[green]"5";"8"};{\ar@[green]"2";"11"};{\ar"11";"3"};{\ar^{}"9";"6"};
{\ar@[green]"6";"13"};{\ar"13";"5"};
{\ar@[green]"3";"9"};{\ar@[green]"1";"7"};{\ar"8";"2"};
\endxy
& \stackrel{\psi}{\longrightarrow} &
\xy
(-12,6)*+{\text{\scriptsize{$2$}}}="1";(0,6)*+{\text{\scriptsize{$1$}}}="2";(12,6)*+{\text{\scriptsize{$2$}}}="3";
(-12,-6)*+{\text{\scriptsize{$1$}}}="4";(0,-6)*+{\text{\scriptsize{$2$}}}="5";(12,-6)*+{\text{\scriptsize{$1$}}}="6";
{\ar^{}"2";"1"};{\ar^{}"1";"4"};{\ar^{}"4";"5"};{\ar^{}"5";"2"};{\ar^{}"2";"3"};{\ar^{}"3";"6"};{\ar^{}"6";"5"}; \endxy \\
&&Q &\ \ \ \ & Q' \\
\\
(ii) & \ \ & \xy
(-12,6)*+{\text{\scriptsize{$2$}}}="1";(0,6)*+{\text{\scriptsize{$1$}}}="2";(12,6)*+{\text{\scriptsize{$2$}}}="3";
(-12,-6)*+{\text{\scriptsize{$1$}}}="4";(0,-6)*+{\text{\scriptsize{$2$}}}="5";(12,-6)*+{\text{\scriptsize{$1$}}}="6";
(-12,0)*{\cdot}="7";(0,0)*{\cdot}="8";(12,0)*{\cdot}="9";
{\ar^{}"2";"1"};{\ar^{}"7";"4"};{\ar^{}"4";"5"};{\ar^{}"5";"8"};{\ar^{}"2";"3"};{\ar^{}"9";"6"};
{\ar^{}"6";"5"};
{\ar@[green]"3";"9"};{\ar@[green]"1";"7"};{\ar@[green]"8";"2"};
\endxy
& \stackrel{\psi}{\longrightarrow} &
\xy
(-12,6)*+{\text{\scriptsize{$2$}}}="1";(0,6)*+{\text{\scriptsize{$1$}}}="2";(12,6)*+{\text{\scriptsize{$2$}}}="3";
(-12,-6)*+{\text{\scriptsize{$1$}}}="4";(0,-6)*+{\text{\scriptsize{$2$}}}="5";(12,-6)*+{\text{\scriptsize{$1$}}}="6";
{\ar^{}"2";"1"};{\ar^{}"1";"4"};{\ar^{}"4";"5"};{\ar^{}"5";"2"};{\ar^{}"2";"3"};{\ar^{}"3";"6"};{\ar^{}"6";"5"};
\endxy
\\
&&Q & \ \ \ \ & Q' \\
\end{array}$$
$$\begin{array}{ccccccc}
(iii) & \ & \xy
(-12,6)*+{\text{\scriptsize{$2$}}}="1";(0,6)*+{\text{\scriptsize{$1$}}}="2";(12,6)*+{\text{\scriptsize{$2$}}}="3";
(-12,-6)*+{\text{\scriptsize{$1$}}}="4";(0,-6)*+{\text{\scriptsize{$2$}}}="5";(12,-6)*+{\text{\scriptsize{$1$}}}="6";
(0,0)*{\cdot}="7";
{\ar^{}"1";"2"};{\ar^{}"2";"7"};{\ar^{}"7";"1"};{\ar@[red]"1";"5"};{\ar^{}"5";"4"};{\ar^{}"4";"1"};{\ar^{}"5";"6"};{\ar"6";"3"};{\ar@[blue]"3";"5"};{\ar^{}"7";"3"};{\ar^{}"3";"2"};
{\ar@[green]"5";"7"};
\endxy
& \stackrel{\psi}{\longrightarrow} &
\xy
(-12,6)*+{\text{\scriptsize{$2$}}}="1";(0,6)*+{\text{\scriptsize{$1$}}}="2";(12,6)*+{\text{\scriptsize{$2$}}}="3";
(-12,-6)*+{\text{\scriptsize{$1$}}}="4";(0,-6)*+{\text{\scriptsize{$2$}}}="5";(12,-6)*+{\text{\scriptsize{$1$}}}="6";
{\ar^{}"1";"2"};{\ar^{}"2";"5"};{\ar^{}"5";"4"};{\ar^{}"4";"1"};{\ar^{}"5";"6"};{\ar^{}"6";"3"};{\ar^{}"3";"2"};{\ar@/_.3pc/"1";"5"};{\ar@/_.3pc/"5";"1"};{\ar@/^.3pc/"5";"3"};{\ar@/^.3pc/"3";"5"};
\endxy
& &
\xy
(-12,6)*+{\text{\scriptsize{$2$}}}="1";(0,6)*+{\text{\scriptsize{$1$}}}="2";(12,6)*+{\text{\scriptsize{$2$}}}="3";
(-12,-6)*+{\text{\scriptsize{$1$}}}="4";(0,-6)*+{\text{\scriptsize{$2$}}}="5";(12,-6)*+{\text{\scriptsize{$1$}}}="6";
{\ar^{}"1";"2"};{\ar^{}"2";"5"};{\ar^{}"5";"4"};{\ar^{}"4";"1"};{\ar^{}"5";"6"};{\ar^{}"6";"3"};{\ar^{}"3";"2"};
\endxy
\\
&& Q & \ \ \ & Q' & \ \ \ &
\\ \\
(iv) & \ & \xy
(-12,12)*+{\text{\scriptsize{$3$}}}="1";(0,12)*+{\text{\scriptsize{$2$}}}="2";(-12,0)*+{\text{\scriptsize{$2$}}}="3";(0,0)*{\cdot}="4";(12,0)*+{\text{\scriptsize{$1$}}}="5";(0,-12)*+{\text{\scriptsize{$1$}}}="6";(12,-12)*+{\text{\scriptsize{$3$}}}="7";
(-18,6)*{}="9";(-6,18)*{}="10";(6,-18)*{}="11";(18,-6)*{}="12";
(6,0)*{\cdot}="8";
{\ar^{}"1";"2"};{\ar^{}"2";"8"};{\ar^{}"4";"3"};{\ar^{}"3";"1"};{\ar^{}"6";"4"};{\ar^{}"4";"7"};{\ar^{}"7";"8"};{\ar^{}"8";"5"};{\ar^{}"5";"7"};{\ar^{}"7";"6"};
{\ar@{..}^{}"9";"10"};{\ar@{..}^{}"10";"12"};{\ar@{..}^{}"12";"11"};{\ar@{..}^{}"11";"9"};
{\ar@[green]"8";"4"};
\endxy
& \stackrel{\psi}{\longrightarrow} &
\xy
(-12,12)*+{\text{\scriptsize{$3$}}}="1";(0,12)*+{\text{\scriptsize{$2$}}}="2";(-12,0)*+{\text{\scriptsize{$2$}}}="3";(0,0)*{\cdot}="4";(12,0)*+{\text{\scriptsize{$1$}}}="5";(0,-12)*+{\text{\scriptsize{$1$}}}="6";(12,-12)*+{\text{\scriptsize{$3$}}}="7";
(-18,6)*{}="9";(-6,18)*{}="10";(6,-18)*{}="11";(18,-6)*{}="12";
{\ar^{}"1";"2"};{\ar^{}"2";"4"};{\ar^{}"4";"3"};{\ar^{}"3";"1"};{\ar^{}"4";"5"};{\ar^{}"5";"7"};{\ar^{}"7";"6"};{\ar^{}"6";"4"};
{\ar@{..}^{}"9";"10"};{\ar@{..}^{}"10";"12"};{\ar@{..}^{}"12";"11"};{\ar@{..}^{}"11";"9"};
{\ar@/_.3pc/"7";"4"};{\ar@/_.3pc/"4";"7"};
\endxy
& &
\xy
(-12,12)*+{\text{\scriptsize{$3$}}}="1";(0,12)*+{\text{\scriptsize{$2$}}}="2";(-12,0)*+{\text{\scriptsize{$2$}}}="3";(0,0)*{\cdot}="4";(12,0)*+{\text{\scriptsize{$1$}}}="5";(0,-12)*+{\text{\scriptsize{$1$}}}="6";(12,-12)*+{\text{\scriptsize{$3$}}}="7";
(-18,6)*{}="9";(-6,18)*{}="10";(6,-18)*{}="11";(18,-6)*{}="12";
{\ar^{}"1";"2"};{\ar^{}"2";"4"};{\ar^{}"4";"3"};{\ar^{}"3";"1"};{\ar^{}"4";"5"};{\ar^{}"5";"7"};{\ar^{}"7";"6"};{\ar^{}"6";"4"};
{\ar@{..}^{}"9";"10"};{\ar@{..}^{}"10";"12"};{\ar@{..}^{}"12";"11"};{\ar@{..}^{}"11";"9"};
\endxy
\\
&&Q & \ \ \ & Q' & \ \ \ &
\end{array}$$
\caption{Some cyclic contractions.
Each quiver is drawn on a torus, and $\psi$ contracts the green arrows.
}
\label{deformation figure}
\end{figure}

\section{Preliminaries}

Let $R$ be an integral domain and a $k$-algebra.
We will denote by $\operatorname{Max}R$ the set of maximal ideals of $R$, and by $\mathcal{Z}(\mathfrak{a})$ the closed set $\left\{ \mathfrak{m} \in \operatorname{Max}R \ | \ \mathfrak{m} \supseteq \mathfrak{a} \right\}$ of $\operatorname{Max}R$ defined by the subset $\mathfrak{a} \subset R$.

We will denote by $Q = \left( Q_0,Q_1,\operatorname{t}, \operatorname{h} \right)$ the quiver with vertex set $Q_0$, arrow set $Q_1$, and head and tail maps $\operatorname{h},\operatorname{t}: Q_1 \to Q_0$.
We will denote by $kQ$ the path algebra of $Q$, and by $e_i$ the idempotent corresponding to vertex $i \in Q_0$.
Multiplication of paths is read right to left, following the composition of maps.
By module we mean left module.
Finally, we will denote by $e_{ij} \in M_d(k)$ the $d \times d$ matrix with a 1 in the $ij$-th slot and zeros elsewhere.

\subsection{Algebra homomorphisms from perfect matchings}

Let $A = kQ/I$ be a dimer algebra on a torus.
Denote by $\mathcal{P}$ and $\mathcal{S}$ the sets of perfect and simple matchings of $Q$ respectively.

Consider the algebra homomorphisms
\begin{equation*} 
\tau: kQ \to M_{|Q_0|}\left(k[ \mathcal{S}]\right) \ \ \ \ \text{ and } \ \ \ \ \eta: kQ \to M_{|Q_0|}\left(k[ \mathcal{P}] \right)
\end{equation*}
defined on $i \in Q_0$ and $a \in Q_1$ by
\begin{align} \label{taua}
\begin{split}
\tau(e_i) = e_{ii}, \ \ \ \ \ \ & \eta(e_i) = e_{ii},\\
\tau(a) = e_{\operatorname{h}(a),\operatorname{t}(a)} \prod_{\substack{x \in \mathcal{S}: \\ x \ni a}} x, \ \ \ \ \ \ & \eta(a) = e_{\operatorname{h}(a),\operatorname{t}(a)} \prod_{\substack{x \in \mathcal{P}: \\ x \ni a}} x,
\end{split}
\end{align}
and extended multiplicatively and $k$-linearly to $kQ$.

\begin{Lemma} \label{tau'A'}
The ideal $I \subset kQ$ given in (\ref{I def}) is contained in the kernels of $\tau$ and $\eta$.
Therefore $\tau$ and $\eta$ induce algebra homomorphisms on the dimer algebra $A$,
\begin{equation*} \label{tau eta}
\tau: A \to M_{|Q_0|}\left(k[ \mathcal{S}]\right) \ \ \ \ \text{ and } \ \ \ \ \eta: A \to M_{|Q_0|}\left(k[ \mathcal{P}] \right).
\end{equation*}
\end{Lemma}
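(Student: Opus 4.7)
The plan is to verify directly that each generator $p-q$ of $I$ is annihilated by both $\eta$ and $\tau$, working with the concrete formulas in (\ref{taua}). It suffices to handle $\eta$, since the argument for $\tau$ is identical after replacing $\mathcal{P}$ by the subset $\mathcal{S} \subseteq \mathcal{P}$.

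First I would reduce to a counting statement. For any path $p = a_\ell \cdots a_1$ in $kQ$, the multiplicative definition gives
\[
\eta(p) \;=\; \Bigl(\prod_{D \in \mathcal{P}} x_D^{\, m_D(p)}\Bigr)\, e_{\operatorname{h}(p),\operatorname{t}(p)},
\]
where $m_D(p) := \#\{i : a_i \in D\}$ counts the arrows of $p$ lying in $D$. Thus $\eta(p) = \eta(q)$ reduces to (a) $\operatorname{h}(p) = \operatorname{h}(q)$ and $\operatorname{t}(p) = \operatorname{t}(q)$, and (b) $m_D(p) = m_D(q)$ for every $D \in \mathcal{P}$.

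Now suppose $ap$ and $aq$ are both unit cycles for some $a \in Q_1$. Condition (a) is immediate: both $ap$ and $aq$ are cycles at $\operatorname{t}(a)$, so $\operatorname{h}(p) = \operatorname{t}(a) = \operatorname{h}(q)$ and $\operatorname{t}(p) = \operatorname{h}(a) = \operatorname{t}(q)$. For (b), fix $D \in \mathcal{P}$. Since $D$ is a perfect matching, each of the unit cycles $ap$ and $aq$ meets $D$ in exactly one arrow. Splitting into the two cases $a \in D$ or $a \notin D$, in the former case that unique arrow is $a$ itself and so $m_D(p) = m_D(q) = 0$, while in the latter $m_D(p) = m_D(q) = 1$. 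Hence $m_D(p) = m_D(q)$ for every $D$, giving $\eta(p) = \eta(q)$, i.e.\ $p - q \in \ker \eta$.

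The same argument verbatim, restricted to $D \in \mathcal{S}$, gives $p-q \in \ker \tau$, so $I \subseteq \ker \tau \cap \ker \eta$ and both maps factor through $A = kQ/I$. There is no real obstacle here; the only subtlety is keeping track that the defining property of a perfect matching, namely ``exactly one arrow per unit cycle,'' is precisely what forces the two monomial exponents to coincide — a weaker condition on matchings would not suffice.
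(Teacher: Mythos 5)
Your proof is correct and follows essentially the same route as the paper: the paper likewise computes $\tau(p)=\prod_{a\notin D\in\mathcal{S}}x_D\cdot e_{\operatorname{t}(a),\operatorname{h}(a)}=\tau(q)$ by using that each perfect matching meets each unit cycle in exactly one arrow, and then does the same for $\eta$. Your version just makes the case split on whether $a\in D$ explicit; no further comment is needed.
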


\begin{proof}
Let $p - q$ be a generator for $I$ as given in (\ref{I def}); that is, there is an arrow $a \in Q_1$ such that $pa$ and $qa$ are unit cycles.
Then
$$\tau(p) = e_{\operatorname{t}(a),\operatorname{h}(a)} \prod_{\substack{x \in \mathcal{S}: \\ x \not \ni a}} x = \tau(q).$$
Similarly, $\eta(p) = \eta(q)$.
Therefor $p - q$ is in the kernels of $\tau$ and $\eta$.
\end{proof}

The following lemma is clear.

\begin{Lemma} \label{sigma}
If $\sigma_i, \sigma'_i$ are two unit cycles at $i \in Q_0$, then $\sigma_i = \sigma'_i$ in $A$.
Furthermore, the sum $\sum_{i \in Q_0}\sigma_i$ is in the center of $A$.
\end{Lemma}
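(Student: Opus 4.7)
For the first claim, I would extract two versions of the relations in (\ref{I def}). Directly, for each arrow $a$ with $\operatorname{h}(a) = v$, the two unit cycles at $v$ ending with $a$---one around each of the two faces of $M \setminus Q$ incident to $a$---coincide in $A$, since they are exactly the $ap$ and $aq$ of the generating relation. Multiplying such a relation on the right by an arrow yields the cyclically rotated statement: for each arrow $b$ with $\operatorname{t}(b) = v$, the two unit cycles at $v$ beginning with $b$ also coincide in $A$, because if $\sigma = pb$ and $\sigma' = p'b$ are such cycles at $v$, then $bp$ and $bp'$ are unit cycles at $\operatorname{h}(b)$ ending with $b$, so $p = p'$ in $A$ and hence $\sigma = \sigma'$.

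Next I would use the local surface structure at $i$: the faces of $M \setminus Q$ containing $i$ on their boundary are cyclically ordered around $i$, and any two cyclically adjacent faces share an arrow incident to $i$. By the previous paragraph, the unit cycles at $i$ around two such adjacent faces agree in $A$ (whether the shared arrow enters or leaves $i$), and transitivity around the cyclic order yields $\sigma_i = \sigma'_i$ for any two unit cycles at $i$.

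For the centrality of $\sigma := \sum_{i \in Q_0} \sigma_i$, it suffices to check that $\sigma$ commutes with each idempotent $e_j$ and each arrow $a$, since these generate $A$. Commutation with $e_j$ is immediate from $\sigma_i = e_i\sigma_i e_i$, giving $e_j\sigma = \sigma_j = \sigma e_j$. For an arrow $a$, I would use the first part to pick well-adapted representatives: letting $p$ be the complementary path around a single face incident to $a$, both $pa$ and $ap$ are unit cycles, so I may take $\sigma_{\operatorname{t}(a)} = pa$ and $\sigma_{\operatorname{h}(a)} = ap$; then $a\sigma = a\sigma_{\operatorname{t}(a)} = apa = \sigma_{\operatorname{h}(a)} a = \sigma a$. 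The only genuine subtlety is the topological fact that cyclically adjacent faces at $i$ share an arrow at $i$, which follows from $Q$ giving a CW decomposition of $M$ with simply connected faces---consistent with the author's remark that the lemma is clear.
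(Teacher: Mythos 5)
Your proof is correct. The paper offers no argument here (it states only that ``the following lemma is clear''), and what you have written is precisely the standard justification being alluded to: the generating relations of $I$ identify the two unit cycles sharing a given final (or, after rotation, initial) arrow, the cyclic fan of faces around a vertex then propagates this to all unit cycles at that vertex, and the identity $a\,\sigma_{\operatorname{t}(a)} = apa = \sigma_{\operatorname{h}(a)}\,a$ gives centrality of $\sum_{i}\sigma_i$.
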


We will denote by $\sigma_i \in A$ the unique unit cycle at vertex $i$.

\begin{Lemma}
Each unit cycle $\sigma_i \in e_iAe_i$ satisfies
\begin{equation*} \label{prod D}
\tau\left( \sigma_i \right) = e_{ii} \prod_{x \in \mathcal{S}} x \ \ \ \ \text{ and } \ \ \ \ \eta\left( \sigma_i \right) = e_{ii} \prod_{x \in \mathcal{P}} x.
\end{equation*}
\end{Lemma}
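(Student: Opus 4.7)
The plan is straightforward: unwind the definition of $\tau$ and $\eta$ on the product of arrows forming $\sigma_i$, and use the defining property of a perfect matching (each unit cycle contains exactly one arrow of each $D \in \mathcal{P}$).

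First I note, by Lemma \ref{sigma}, that $\sigma_i$ is well-defined in $A$, so it suffices to pick any one unit cycle at $i$ and compute $\eta$ and $\tau$ on its representative in $kQ$. Write $\sigma_i = a_{\ell} a_{\ell-1} \cdots a_1$ where $a_1,\dots,a_\ell \in Q_1$ with $\operatorname{t}(a_1) = \operatorname{h}(a_\ell) = i$ and $\operatorname{h}(a_j) = \operatorname{t}(a_{j+1})$. Applying $\eta$ multiplicatively and using that the matrix units telescope along the cycle, I get
$$\eta(\sigma_i) \;=\; \prod_{j=1}^{\ell} \Bigl(\prod_{a_j \in D \in \mathcal{P}} x_D\Bigr) \cdot e_{ii} \;=\; \Bigl(\prod_{D \in \mathcal{P}} \prod_{\substack{a_j \in D \\ 1 \le j \le \ell}} x_D\Bigr) \cdot e_{ii}.$$

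The key step is then the combinatorial input from the definition of a perfect matching: by hypothesis, each $D \in \mathcal{P}$ contains \emph{precisely one} arrow of the unit cycle $\sigma_i$. Hence the inner product over $j$ collapses to a single factor $x_D$, and the outer product runs over all $D \in \mathcal{P}$, giving $\eta(\sigma_i) = \prod_{D \in \mathcal{P}} x_D \cdot e_{ii}$. The computation for $\tau$ is identical, with $\mathcal{S} \subseteq \mathcal{P}$ replacing $\mathcal{P}$ (noting that simple matchings are a fortiori perfect matchings, so each $D \in \mathcal{S}$ still contains exactly one arrow of $\sigma_i$).

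There is no real obstacle here; the only subtle point to mention is that the identity is an identity in $A$ (not just in $kQ$), which is why Lemma \ref{sigma} is invoked at the beginning so that the choice of representative unit cycle at $i$ does not matter, and why Lemma \ref{tau'A'} is needed so that $\tau$ and $\eta$ descend to $A$.
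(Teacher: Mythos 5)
Your proof is correct and is exactly the paper's argument, spelled out in more detail: the paper's entire proof is the single observation that each perfect matching contains precisely one arrow of each unit cycle, which is your key collapsing step. The extra care about well-definedness via Lemma \ref{sigma} and descent via Lemma \ref{tau'A'} is sound but left implicit in the paper.
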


\begin{proof}
Each perfect matching contains precisely one arrow in each unit cycle.
\end{proof}

\begin{Notation} \label{tau bar notation} \rm{
For each $i,j \in Q_0$, denote by
$$\bar{\tau}: e_jAe_i \to B := k[\mathcal{S}] \ \ \ \ \text{ and } \ \ \ \ \bar{\eta}: e_jAe_i \to k[\mathcal{P}]$$
the respective $k$-linear maps defined on $p \in e_jAe_i$ by
$$\tau(p) = \bar{\tau}(p)e_{ji} \ \ \ \ \text{ and } \ \ \ \ \eta(p) = \bar{\eta}(p)e_{ji}.$$
In particular, $\bar{\tau}(p)$ and $\bar{\eta}(p)$ are the single nonzero matrix entries of $\tau(p)$ and $\eta(p)$.
In Section \ref{Cycle structure}, we will set
\begin{equation} \label{A notation}
\overbar{p} := \bar{\tau}(p) \ \ \ \text{ and } \ \ \ \sigma := \prod_{x \in \mathcal{S}} x \ \ \ \text{(or occasionally, $\sigma := \prod_{x \in \mathcal{P}} x$).}
\end{equation}
In Section \ref{Cancellative dimer algebras}, given a cyclic contraction $\psi: A \to A'$ and elements $p \in e_jAe_i$, $q \in e_{\ell}A'e_k$, we will set
\begin{equation*} \label{contraction notation}
\overbar{p} := \bar{\tau}_{\psi}(p) := \bar{\tau}\psi(p), \ \ \ \ \overbar{q} := \bar{\tau}(q), \ \ \ \text{ and } \ \ \ \sigma := \prod_{x \in \mathcal{S}'} x,
\end{equation*}
where $\mathcal{S}'$ is the set of simple matchings of $A'$.
}\end{Notation}

\begin{Remark} \label{Broomhead} \rm{
Let $A = kQ/I$ be a dimer algebra.
In \cite[5.1]{Br}, Broomhead introduced the `toric algebra' $T$ of $Q$:
\begin{equation*}
T := k \left\langle \oplus_{i,j \in Q_0} (\mathbb{Z}^{Q_1}/\partial_2(\underline{1}^{\perp}))^+ \cap \partial_1^{-1}(i-j) \right\rangle,
\end{equation*}
where the maps $\mathbb{Z}^{\operatorname{faces}} \stackrel{\partial_2}{\longrightarrow} \mathbb{Z}^{Q_1} \stackrel{\partial_1}{\longrightarrow} \mathbb{Z}^{Q_0}$ are defined by taking a face to the sum of its boundary arrows, and an arrow $a$ to the difference $\operatorname{h}(a) - \operatorname{t}(a)$.
We remark that this algebra is almost never isomorphic to the ghor algebra $\Lambda$ of $Q$.

Indeed, the toric and ghor algebras of $Q$ are clearly non-isomorphic if $Q$ has no perfect matchings.
So suppose $Q$ is nondegenerate.
Set $n := |Q_0|$ and $\tilde{S} := k[ \cup_{i \in Q_0} \bar{\eta}(e_ikQe_i)] \cong \partial_1^{-1}(0)$.
Then $T$ is isomorphic to the tiled matrix algebra
\begin{equation*}
T \cong \left[ \begin{matrix}
\tilde{S} & \bar{\eta}( e_1kQe_2)\tilde{S} & \cdots & \bar{\eta}(e_1kQe_n)\tilde{S}\\
\bar{\eta}(e_2 kQ e_1)\tilde{S} & \tilde{S} & \cdots & \bar{\eta}(e_2kQe_n)\tilde{S}\\
\vdots & \vdots & \ddots & \vdots \\
\bar{\eta}(e_nkQe_1)\tilde{S} & \bar{\eta}(e_nkQe_2)\tilde{S} & \cdots & \tilde{S}
\end{matrix} \right] 
\subset M_n(k[\mathcal{P}]),
\end{equation*}
where $\tilde{S}$ appears in each diagonal component. 
The ghor algebra $\Lambda$ of $Q$, on the other hand, is the tiled matrix algebra
\begin{equation*}
\Lambda := \left[ \bar{\eta}(e_jkQe_i) \right]_{i,j} = \left[ \begin{matrix}
\bar{\eta}(e_1kQe_1) & \bar{\eta}( e_1kQe_2) & \cdots & \bar{\eta}(e_1kQe_n)\\
\bar{\eta}(e_2 kQ e_1) & \bar{\eta}(e_2kQe_2) & \cdots & \bar{\eta}(e_2kQ e_n)\\
\vdots & \vdots & \ddots & \vdots \\
\bar{\eta}(e_nkQe_1) & \bar{\eta}(e_nkQe_2) & \cdots & \bar{\eta}(e_nkQe_n)
\end{matrix} \right] 
\subset M_n(k[\mathcal{P}]).
\end{equation*}
These two algebras are rarely isomorphic: on a torus, the two algebras coincide if and only if $A$ is cancellative, in which case they are also isomorphic to $A$ (see Theorem 30 below and \cite{Br,Bo}).
On higher genus surfaces, there are only two known exceptional families of dimer quivers whose toric and ghor algebras are isomorphic, given in \cite[Figures 2 and 3]{BB}. 
However, in general the vertex corner rings $\bar{\eta}(e_ikQe_i)$ are not all equal, and therefore the two algebras are not isomorphic. 
Furthermore, the two algebras have very different algebraic and geometric properties; for example, toric algebras are generically noetherian whereas ghor algebras are not \cite{B3, B6}, \cite[Section 4.1]{BB}.
}\end{Remark}

\subsection{Impressions}

The following definition, introduced in \cite{B7}, captures a useful matrix ring embedding.

\begin{Definition} \label{impression definition} \rm{\cite[Definition 2.1]{B7}
Let $A$ be a finitely generated $k$-algebra and let $Z$ be its center.
An \textit{impression} of $A$ is an algebra monomorphism $\tau: A \hookrightarrow M_d(B)$ to a matrix ring over a commutative finitely generated $k$-algebra $B$, such that
\begin{itemize}
 \item for generic $\mathfrak{b} \in \operatorname{Max}B$, the composition
\begin{equation} \label{composition}
A \stackrel{\tau}{\longrightarrow} M_d(B) \stackrel{1}{\longrightarrow} M_d\left(B/\mathfrak{b} \right) \cong M_d(k)
\end{equation}
is surjective; and
 \item the morphism $\operatorname{Max}B \rightarrow \operatorname{Max}\tau(Z)$, $\mathfrak{b} \mapsto \mathfrak{b} \cap \tau(Z)$, is surjective.
\end{itemize}
} \end{Definition}

Surjectivity of (\ref{composition}) implies that the center $Z$ is given by
\begin{equation} \label{Ziso}
Z \cong \left\{ f \in B \ | \ f1_d \in \operatorname{im}\tau \right\} \subseteq B.
\end{equation}
If in addition $A$ is a finitely generated module over its center, then $\tau$ classifies all simple $A$-module isoclasses of maximal $k$-dimension \cite[Proposition 2.5]{B7}.
Specifically, for each such module $V$, there is some $\mathfrak{b} \in \operatorname{Max}B$ such that
\begin{equation} \label{Vcong}
V \cong (B/\mathfrak{b})^d,
\end{equation}
where $av := \tau(a)v$ for each $a \in A$, $v \in (B/\mathfrak{b})^d$.
If $A$ is nonnoetherian, then $\tau$ may characterize the central geometry of $A$ using the framework of depictions \cite[Section 3]{B4}; this relationship is used to study the central geometry of nonnoetherian ghor and dimer algebras in \cite{B6}.

We will show that for a ghor algebra $\Lambda$, the homomorphism $\tau_{\psi}: \Lambda \to M_n(B)$ is an impression (Theorem \ref{impression prop}.1).
Furthermore, a dimer algebra admits an impression if and only if it equals its ghor algebra (Corollary \ref{cr}).

\section{Cyclic contractions} \label{cyclicconractions}

In this section, we describe a new method for studying non-cancellative dimer algebras, first introduced in \cite{B2}, that is based on the notion of Higgsing, or more generally symmetry breaking, in physics.
Using this strategy, we gain information about non-cancellative dimer algebras by relating them to cancellative dimer algebras with similar structure.
Throughout, $A = kQ/I$ is a dimer algebra, typically non-cancellative.

\begin{Definition} \label{contraction} \rm{
Let $Q$ be a dimer quiver, let $Q_1^* \subset Q_1$ be a subset of arrows, and let $Q'$ be the quiver obtained by contracting each arrow in $Q_1^*$.
Specifically, $Q'$ is formed by simultaneously removing each arrow $a$ in $Q_1^*$, and merging together the head and tail vertices of $a$.
This operation defines a $k$-linear map of path algebras
$$\psi: kQ \rightarrow kQ',$$
where
$$\psi(a) = \left\{ \begin{array}{cl} a & \text{ if } \ a \in Q_0 \cup Q_1 \setminus Q_1^* \\ e_{\operatorname{t}(a)} & \text{ if } \ a \ \in Q_1^* \end{array} \right.$$
and extended multiplicatively to paths and $k$-linearly to $kQ$.
If $\psi$ induces a $k$-linear map of dimer algebras
$$\psi: A = kQ/I \to A' = kQ'/I',$$
that is, $\psi(I) \subseteq I'$, then we call $\psi$ a \textit{contraction of dimer algebras}.
}\end{Definition}

We now describe the structure we wish to preserve under a contraction.
To specify this structure, we introduce the following commutative algebras.

\begin{Definition} \label{cyclic contraction} \rm{
Let $\psi: A \to A'$ be a contraction to a cancellative dimer algebra.
If
$$S := k\left[ \cup_{i \in Q_0} \bar{\tau}\psi\left(e_iAe_i\right) \right] = k\left[ \cup_{i \in Q'_0} \bar{\tau}\left( e_iA'e_i \right) \right],$$
then we say $\psi$ is \textit{cyclic}, and call $S$ the \textit{cycle algebra} of $A$.
}\end{Definition}

The cycle algebra is independent of the choice of $\psi$ by \cite[Theorem 3.13]{B2}.

\begin{Example} \rm{
Four cyclic contractions are given in Figure \ref{deformation figure}.
The non-cancellative quivers $Q$ in (ii) and (iv) have appeared in the physics literature (e.g., \cite[Section 4]{F-R}, \cite{FKR}; and \cite[Table 6, 2.6]{DHP}).
The unit 2-cycles in (iii) and (iv) consist of arrows that are redundant generators for $A' = kQ'/I'$, and so may be removed from $Q'$.
In (iii), let $a,b,c$ be the respective red, blue, and green arrows in $Q$.
Observe that in $A = kQ/I$, we have
$$ab \not = ba \ \ \ \text{ and } \ \ \ cab = cba.$$
Thus the pair $ab$, $ba$ is non-cancellative (in fact, $a$ and $b$ generate a free subalgebra of $A$).
In (i), $Q$ has no simple matchings.
These examples are considered in more detail in Example \ref{four examples}.
}\end{Example}

\begin{Notation} \rm{
For $g,h \in B$, by $g \mid h$ we mean $g$ divides $h$ in $B$, even if $g$ or $h$ is assumed to be in $S$.
}\end{Notation}

\begin{Lemma} \label{cannot contract}
Suppose $\psi: A \to A'$ is a contraction of dimer algebras, and $A'$ has a perfect matching.
Then $\psi$ cannot contract a unit cycle of $A$ to a vertex.
\end{Lemma}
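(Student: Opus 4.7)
The plan is to argue by contradiction, producing an impossible polynomial identity from a supposed collapse. Suppose some unit cycle $\sigma_i$ of $A$ is contracted by $\psi$ to a vertex; equivalently, every arrow on the boundary of the face $F_0$ of $Q$ enclosed by $\sigma_i$ lies in the contracted set $Q_1^*$. Let $\mathcal{F}^*$ denote the set of faces $F$ of $Q$ with $\partial F \subseteq Q_1^*$, so $F_0 \in \mathcal{F}^*$. Since $Q'$ is a dimer quiver, it must have arrows, so $Q_1^* \neq Q_1$; as every arrow of $Q$ bounds a face, $\mathcal{F}^*$ cannot contain every face. Connectedness of the face-adjacency graph of $Q$ then produces adjacent faces $F \in \mathcal{F}^*$ and $F' \notin \mathcal{F}^*$ sharing an arrow $a \in Q_1^*$; set $j := \operatorname{h}(a)$.

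The key step applies Lemma \ref{sigma}: the two unit cycles $\sigma^F_j$ and $\sigma^{F'}_j$ at $j$ going around $F$ and $F'$ respectively are equal in $A$, hence so are their images under $\psi$ in $A'$. Because every arrow of $\partial F$ lies in $Q_1^*$, all vertices of $\partial F$ (including $j$) collapse to a single vertex $v$ of $Q'$, so $\psi(\sigma^F_j) = e_v$. Since $F' \notin \mathcal{F}^*$ retains at least one uncontracted boundary arrow, $F'$ descends to a face of $Q'$ whose boundary cycle is the image of $\partial F'$, and therefore $\psi(\sigma^{F'}_j) = \sigma'_v$, the unit cycle of $Q'$ at $v$ bounding $F'$. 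Thus $\sigma'_v = e_v$ in $A'$.

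To finish, I would apply the homomorphism $\eta': A' \to M_{|Q'_0|}(k[x_D \mid D \in \mathcal{P}'])$, well-defined by Lemma \ref{tau'A'} since $\mathcal{P}' \neq \emptyset$. By the preceding lemma computing $\eta'$ on unit cycles, $\eta'(\sigma'_v) = \prod_{D \in \mathcal{P}'} x_D \cdot e_{vv}$ while $\eta'(e_v) = e_{vv}$; the identity $\sigma'_v = e_v$ therefore forces $\prod_{D \in \mathcal{P}'} x_D = 1$ in the polynomial ring, which is impossible since $\mathcal{P}'$ is nonempty. The main technical point will be verifying that $F'$ persists as a face of $Q'$ with the anticipated boundary cycle—a topological claim about edge contraction that I expect to confirm by direct inspection, using that $Q'$ is a dimer quiver and that $\partial F'$ is not fully contracted, so the residual non-contracted arrows of $\partial F'$ still form a simply-connected face-boundary in $Q'$.
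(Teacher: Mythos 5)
Your proof is correct, and its endgame coincides with the paper's: both push a degenerate relation into $M_{|Q'_0|}(k[x_D \mid D\in\mathcal{P}'])$ via $\eta'$ and read off the impossible identity $\prod_{D\in\mathcal{P}'}x_D=1$. Where you genuinely differ is in how the relation ``unit cycle $=$ vertex'' is produced in $A'$. The paper argues globally: it fixes an \emph{arbitrary} unit cycle $\sigma_{i'}$ of $Q'$, lifts $i'$ to $i\in Q_0$, chooses a path $p$ from $i$ to the base $j$ of the collapsed cycle, and uses the centrality of $\sum_i\sigma_i$ (Lemma \ref{sigma}) to get $p\sigma_i=\sigma_j p$, hence $\psi(p)\sigma_{i'}=\psi(p)$ in $A'$, after which it cancels $\bar{\eta}(\psi(p))$. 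You argue locally: connectedness of the dual graph yields a fully contracted face adjacent to a not-fully-contracted one, and Lemma \ref{sigma} applied at a shared vertex gives $\sigma'_v=e_v$ outright, with no connecting path and no cancellation step. The price you pay is the dual-graph argument plus the topological claim that a non-collapsed face $F'$ of $Q$ descends to a face of $Q'$ bounded by $\psi(\partial F')$; but note that the paper's first equality $p'\sigma_{i'}=\psi(p\sigma_i)$ silently relies on the very same claim (it needs $\psi(\sigma_i)$ to be a unit cycle of $Q'$ at $i'$, not merely some closed walk, and in particular needs the face at $i$ not to be collapsed itself), so the point you flag for verification is not an extra burden of your approach --- if anything, your adjacency argument makes explicit a case the paper's choice of $i$ glosses over.
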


\begin{proof}
Assume to the contrary that $\psi$ contracts the unit cycle $\sigma_j \in A$ to the vertex $e_{\psi(j)} \in A'$.
Fix a unit cycle $\sigma_{i'} \in A'$.
Since $\psi$ is surjective on $Q'_0$, there is a vertex $i \in Q_0$ such that $\psi(i) = i'$.
Let $p \in A$ be a path from $i$ to $j$, and set $p' := \psi(p)$.
Then
\begin{equation} \label{p' sigma}
p' \sigma_{i'} \stackrel{\textsc{(i)}}{=} \psi(p \sigma_i) \stackrel{\textsc{(ii)}}{=} \psi(\sigma_j p) \stackrel{\textsc{(iii)}}{=} \psi(\sigma_j) p' = e_{\psi(j)}p' = p'.
\end{equation}
Indeed, (\textsc{i}) and (\textsc{iii}) hold by Definition \ref{contraction}, and (\textsc{ii}) holds by Lemma \ref{sigma}.

Denote by $\mathcal{P}'$ the set of perfect matchings of $A'$.
Set $\sigma := \prod_{x \in \mathcal{P}'}x$.
Then (\ref{p' sigma}) implies
$$\bar{\eta}(p') \sigma = \bar{\eta}(p' \sigma_{i'}) = \bar{\eta}(p') \in k[\mathcal{P}' ],$$
by Lemma \ref{tau'A'}.
Whence $\sigma = 1$.
But this contradicts our assumption that $\mathcal{P}' \not = \emptyset$.
\end{proof}

An example where a unit cycle is contracted to a vertex is given in Figure \ref{non-example unit}.

\begin{figure}
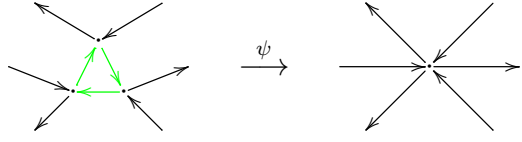

$$\xy 0;/r.4pc/:
(-5,5)*{}="1";(5,5)*{}="2";(7.07,0)*{}="3";(5,-5)*{}="4";(-5,-5)*{}="5";(-7.07,0)*{}="6";
(0,2)*{\cdot}="7";(2,-2)*{\cdot}="8";(-2,-2)*{\cdot}="9";
{\ar"7";"1"};{\ar"2";"7"};{\ar"4";"8"};{\ar"6";"9"};{\ar"9";"5"};{\ar"8";"3"};
{\ar@[green]"7";"8"};{\ar@[green]"8";"9"};{\ar@[green]"9";"7"};\endxy
\ \ \ \ \stackrel{\psi}{\longrightarrow} \ \ \ \
\xy 0;/r.4pc/:
(-5,5)*{}="1";(5,5)*{}="2";(7.07,0)*{}="3";(5,-5)*{}="4";(-5,-5)*{}="5";(-7.07,0)*{}="6";(0,0)*{\cdot}="7";
{\ar"7";"1"};{\ar"2";"7"};{\ar"4";"7"};{\ar"7";"5"};{\ar"6";"7"};{\ar"7";"3"};\endxy$$
\caption{The contraction of a unit cycle (drawn in green) to a vertex.  Such a contraction cannot induce a contraction of dimer algebras $\psi: A \to A'$ if $A'$ has a perfect matching, by Lemma \ref{cannot contract}.}
\label{non-example unit}
\end{figure}

\begin{Lemma} \label{positive length cycle}
Suppose $\psi: A \to A'$ is a contraction of dimer algebras, and $A'$ has a perfect matching.
Then $\psi$ cannot contract a cycle in the underlying graph $\overbar{Q}$ of $Q$ to a vertex.
In particular,
\begin{enumerate}
 \item $\psi$ cannot contract a cycle in $Q$ to a vertex;
 \item if $p$ is a cycle in $Q$ that is not null-homotopic as a loop on the torus, then $\psi(p)$ is a cycle in $Q'$ that is also not null-homotopic; and
 \item $A$ does not have a non-cancellative pair where one of the paths is a vertex.
\end{enumerate}
\end{Lemma}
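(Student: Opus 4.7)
My plan is to argue by contradiction: suppose a cycle $C$ of $\overbar{Q}$ has all arrows in $Q_1^*$, so that $\psi$ contracts all vertices of $C$ to a single vertex $w$ of $Q'$. The key reduction is to exhibit a unit cycle $\sigma_j \in A$ whose arrows all lie in $Q_1^*$; once such a $\sigma_j$ is found, $\psi$ contracts it to the vertex $e_w$, and the contradiction follows from Lemma \ref{cannot contract}.

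To produce such a $\sigma_j$, I would view $C$ as an embedded closed curve in the compact surface $M$, exploiting that $M\setminus Q$ is tiled by the disks bounded by unit cycles. Among simple cycles of $\overbar{Q}$ with all arrows in $Q_1^*$, I would choose one (still called $C$) enclosing the smallest region $R\subseteq M$, handling the non-separating case by first cutting $M$ along a transverse arc. If $R$ consists of a single face, its boundary is a unit cycle with the required property. Otherwise there is an interior arrow $b$ of $R$; if $b\in Q_1^*$ then $b$ together with a sub-arc of $C$ bounds a strictly smaller qualifying region, contradicting minimality, and if $b\notin Q_1^*$ then the equality of unit cycles at $\operatorname{t}(b)$ coming from the two faces adjacent to $b$ (Lemma \ref{sigma}) lets us reroute $C$ across $b$, again shrinking $R$. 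This topological/combinatorial reduction—in particular, controlling which arrows remain in $Q_1^*$ after each rerouting—is the main obstacle in the proof.

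The three consequences then follow quickly. For (1), any oriented cycle of $Q$ is a fortiori a cycle of $\overbar{Q}$, so the main statement applies. For (2), were $p\in\mathcal{C}\setminus\mathcal{C}^0$ to satisfy $\psi(p)\in\mathcal{C}'^0$, every arrow of $p$ would lie in $Q_1^*$, contradicting (1). For (3), suppose $(p,e_i)$ is a non-cancellative pair with $pr=e_ir=r\neq 0$ in $A$ (the case $rp=re_i r$ is symmetric). Then $p$ is a non-vertex cycle at $i$, so $p\in\mathcal{C}\setminus\mathcal{C}^0$. Applying $\psi$ yields $\psi(p)\psi(r)=\psi(r)$ in $A'$; applying the single-entry map $\bar\eta'$ of Notation \ref{tau bar notation} for $A'$ gives
\[
\bar\eta'(\psi(p))\cdot\bar\eta'(\psi(r))=\bar\eta'(\psi(r))
\]
in the integral domain $k[x_D\mid D\in\mathcal{P}']$. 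Since the monomial $\bar\eta'(\psi(r))$ is nonzero, cancelling forces $\bar\eta'(\psi(p))=1$, so $\psi(p)$ has no arrows and is therefore a vertex of $A'$, contradicting (2).
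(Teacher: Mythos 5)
There are genuine gaps here. The most serious one is in the main statement: your reduction to a unit cycle all of whose arrows lie in $Q_1^*$ is the right target, but the minimality/rerouting argument that is supposed to reach it is not carried out, and as described it cannot work. In the case of an interior arrow $b \notin Q_1^*$, rerouting $C$ across $b$ produces a cycle containing an arrow \emph{not} in $Q_1^*$, so the new cycle is no longer contracted to a vertex and does not qualify for your minimality comparison; Lemma \ref{sigma} is a statement about unit cycles in $A$ and gives no control over which arrows of a rerouted cycle lie in $Q_1^*$ --- exactly the point you yourself flag as ``the main obstacle.'' The non-separating case (an essential cycle on the torus, which bounds no region at all) is also left unresolved. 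The paper avoids all of this with a short global argument: since $\overbar{Q}$ and $\overbar{Q}'$ both embed in the two-torus, their Euler characteristics vanish; contracting $\ell \geq 1$ cycles to $m \geq 1$ vertices removes at least as many edges as vertices (net of the $m$ new vertices), which forces $F' < F$, so some face --- that is, some unit cycle --- is contracted to a vertex, contradicting Lemma \ref{cannot contract}. You would do better to adopt that route than to try to repair the local one.

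Second, your deductions of (2) and (3) rest on a misreading of the notation: $\mathcal{C}^0$ (resp.\ $\mathcal{C}'^0$) is the set of cycles whose lifts to the cover are \emph{closed loops}, i.e.\ homologically trivial cycles such as unit cycles --- not the set of trivial cycles. So in (2), $\psi(p) \in \mathcal{C}'^0$ does not mean $\psi(p)$ is a vertex, and it certainly does not force every arrow of $p$ into $Q_1^*$; the correct deduction is that, by the main statement, $Q_1^*$ contains no cycle of $\overbar{Q}$ and hence is a forest, so contracting it preserves homology classes and $\psi(p)$ carries the same nonzero class $u$ as $p$. Likewise in (3), ``$p$ is a non-vertex cycle at $i$, so $p \in \mathcal{C}\setminus\mathcal{C}^0$'' is false --- by Lemma \ref{r+}.1 such a $p$ in fact lies in $\mathcal{C}^0$ --- and the final step ``$\bar{\eta}'(\psi(p)) = 1$, so $\psi(p)$ has no arrows'' tacitly uses nondegeneracy of $Q'$, which is not among the hypotheses: the lemma only assumes $A'$ has \emph{a} perfect matching, and an arrow of $Q'$ contained in no perfect matching also has $\bar{\eta}'$-image equal to $1$.
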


\begin{proof}
The number of vertices, edges, and faces in the underlying graphs $\overbar{Q}$ and $\overbar{Q}'$ of $Q$ and $Q'$ are given by
$$\begin{array}{lclcl}
V = \left| Q_0 \right|, & \ \ \ & E = \left| Q_1 \right|, & \ \ \ & F =  \# \text{ of connected components of } T^2 \setminus \overbar{Q}, \\
V' = \left| Q'_0 \right|, & & E' = \left| Q'_1 \right|, & & F' =  \# \text{ of connected components of } T^2 \setminus \overbar{Q}'.
\end{array}$$
Since $\overbar{Q}$ and $\overbar{Q}'$ each embed into a two-torus, their respective Euler characteristics vanish:
\begin{equation*} \label{VEF}
V - E + F = 0, \ \ \ \ V' - E' + F' = 0.
\end{equation*}

Assume to the contrary that $\psi$ contracts the cycles $p_1,\ldots, p_{\ell}$ in $\overbar{Q}$ to vertices in $\overbar{Q}'$.
Denote by $n_0$ and $n_1$ the respective number of vertices and arrows in $Q$ which are subpaths of some $p_i$, $1 \leq i \leq \ell$.
Denote by $m$ the number of vertices in $Q'_0$ of the form $\psi(p_i)$ for some $1 \leq i \leq \ell$.
By assumption, $m \geq 1$.

In any cycle, the number of trivial subpaths equals the number of arrow subpaths.
Furthermore, if two cycles share a common edge, then they also share a common vertex.
Therefore
$$n_1 \geq n_0.$$
Whence
$$\begin{array}{rcl}
0 & = & F' - E' + V'\\
& = & F' - (E - n_1) + (V - n_0 + m)\\
& = & F' + (-E + V) + (n_1 - n_0) + m\\
& \geq & F' - F + m.
\end{array}$$
Thus $F' < F$ since $m \geq 1$.
Therefore $\psi$ contracts a face of $Q$ to a vertex.
In particular, some unit cycle in $Q$ is contracted to a vertex.
But this is not possible by Lemma \ref{cannot contract}.
\end{proof}

\begin{Remark} \rm{
Suppose $\psi: A \to A'$ is a cyclic contraction.
We will show in Lemma \ref{at least one} below that $A'$, being cancellative, necessarily has a perfect matching.
Therefore Lemmas \ref{cannot contract} and \ref{positive length cycle} hold in the case $\psi$ is cyclic.
}\end{Remark}

An immediate question is whether all non-cancellative dimer algebras admit cyclic contractions.
If $Q$ is nondegenerate, then $A$ admits a cyclic contraction \cite[Theorem 1.1]{B1}.
However, there are degenerate dimer algebras that do not admit contractions (cyclic or not) to cancellative dimer algebras.
For example, dimer algebras that contain permanent 2-cycles (Definition \ref{removable2}) cannot contract to cancellative dimer algebras, by Lemma \ref{positive length cycle}.1.

A similar question is whether cyclic contractions can exist between two different cancellative dimer algebras.
The answer to this question is negative: if $\psi: A \to A'$ is a cyclic contraction and $A$ is cancellative, then $\psi$ must be the identity map \cite[Theorem 1.1]{B3}.

\begin{Remark} \rm{
Suppose $\psi: A \to A'$ is a contraction.
Consider a path $p = a_n \cdots a_2a_1 \in kQ$ with $a_1, \ldots, a_n \in Q_0 \cup Q_1$.
If $p \not = 0$, then by definition
$$\psi(p) = \psi(a_n) \cdots \psi(a_1) \in kQ'.$$
However, we claim that if $\psi$ is nontrivial, then it is not an algebra homomorphism.
Indeed, let $\delta \in Q_1^*$.
Consider a path $a_2 \delta a_1 \not = 0$ in $A$.
By Lemma \ref{positive length cycle}.1, $\delta$ is not a cycle.
In particular, $\operatorname{h}(a_1) \not = \operatorname{t}(a_2)$.
Whence
$$\psi(a_2a_1) = \psi(0) = 0 \not = \psi(a_2)\psi(a_1).$$
We note, however, that the restriction
$$\psi: \epsilon_0A\epsilon_0 \to A' \ \ \ \text{ where } \ \epsilon_0 := 1_A - \sum_{\delta \in Q_1^*}e_{\operatorname{h}(\delta)},$$
is an algebra homomorphism \cite[Proposition 2.12.1]{B2}.
}\end{Remark}

\begin{Remark} \rm{
The notion of Higgsing in quiver gauge theories (that is, contracting a set of arrows to vertices) was established in the early 2000's, before dimer algebras appeared (e.g., \cite{F-H}).
Higgsing was first introduced in the context of Morita equivalences by the author in 2013 in \cite[arXiv v1]{B2}, and a couple of months later by Ishii and Ueda in \cite[arXiv v2; Morita equivalences did not appear in v1]{IU}.
The latter was based on Gulotta's `inverse algorithm' from 2008 \cite{G}.
Gulotta's algorithm produces cancellative dimer quivers for arbitrary polygons and uses Higgsing in an essential way. 
In contrast, both the cycle algebra and the operation of Higgsing \textit{that preserves the cycle algebra}, that is, cyclic contractions, were introduced in \cite{B2} to study non-cancellative dimer algebras.
Cyclic contractions have subsequently played a fundamental role in the articles \cite{B3, B5, B6}.
}\end{Remark}

\section{Cycle structure} \label{Cycle structure}

Let $A = kQ/I$ be a dimer algebra, possibly with no perfect matchings.
Unless stated otherwise, by path or cycle we mean path or cycle modulo $I$.
Throughout, we use the notation (\ref{A notation}).

\begin{Notation} \rm{
Let $\pi: \mathbb{R}^2 \rightarrow T^2$ be a covering map such that for some $i \in Q_0$,
$$\pi\left(\mathbb{Z}^2 \right) = i.$$
Denote by $Q^+ := \pi^{-1}(Q) \subset \mathbb{R}^2$ the covering quiver of $Q$.
For each path $p$ in $Q$, denote by $p^+$ the unique path in $Q^+$ with tail in the unit square $[0,1) \times [0,1) \subset \mathbb{R}^2$ satisfying $\pi(p^+) = p$.

For paths $p$, $q$ satisfying
\begin{equation} \label{exception}
\operatorname{t}(p^+) = \operatorname{t}(q^+) \ \ \text{ and } \ \ \operatorname{h}(p^+) = \operatorname{h}(q^+),
\end{equation}
denote by $\mathcal{R}_{\tilde{p},\tilde{q}}$ the compact region in $\mathbb{R}^2 \supset Q^+$ bounded by representatives $\tilde{p}^+$, $\tilde{q}^+$ of $p^+$, $q^+$.\footnote{In Lemma \ref{columns and pillars}.1, we will not require (\ref{exception}) to hold.}
If the representatives are fixed (or arbitrary), we will write $\mathcal{R}_{p,q}$ for $\mathcal{R}_{\tilde{p},\tilde{q}}$.
}\end{Notation}

\begin{Definition} \label{non-cancellative pair def} \rm{ \
We say a non-cancellative pair $p,q \in A$ is \textit{minimal} if for each non-cancellative pair $s,t \in A$, we have  $\{s,t\} = \{ p,q \}$ whenever there exists representatives $\tilde{p}, \tilde{q}, \tilde{s},\tilde{t}$ satisfying
$$\mathcal{R}_{\tilde{s},\tilde{t}} \subseteq \mathcal{R}_{\tilde{p},\tilde{q}}.$$
} \end{Definition}

\begin{Lemma} \label{here2}
Let $p,q \in e_jAe_i$ be distinct paths such that
\begin{equation*} \label{p+}
\operatorname{t}(p^+) = \operatorname{t}(q^+) \ \ \text{ and } \ \ \operatorname{h}(p^+) = \operatorname{h}(q^+).
\end{equation*}
The following hold.
\begin{enumerate}
 \item $p \sigma_i^m = q \sigma_i^n$ for some $m, n \geq 0$.
 \item $\bar{\tau}(p) = \bar{\tau}(q) \sigma^m$ and $\bar{\eta}(p) = \bar{\eta}(q) \sigma^m$ for some $m \in \mathbb{Z}$.
 \item If $p,q$ is a non-cancellative pair, then $\bar{\tau}(p) = \bar{\tau}(q)$ and $\bar{\eta}(p) = \bar{\eta}(q)$.
 \item If $\mathcal{P} \not = \emptyset$ and $\bar{\eta}(p) = \bar{\eta}(q)$, then $p,q$ is a non-cancellative pair.
\end{enumerate}
\end{Lemma}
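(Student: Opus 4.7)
The plan is to establish (1) first, and then derive (2)--(4) by applying the algebra homomorphisms $\bar{\tau}$ and $\bar{\eta}$ and exploiting that their common target $B$ is a polynomial ring, hence an integral domain.

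For (1), I would induct on the number of unit faces enclosed by the region $\mathcal{R}_{p^+,q^+} \subset \mathbb{R}^2$. The base case of zero faces forces $p^+ = q^+$ as paths in $Q^+$, hence $p = q$ in $A$, and the identity holds with $m = n = 0$. For the inductive step, I would fix a face $f$ on the boundary of $\mathcal{R}_{p^+,q^+}$ sharing a directed subpath $\alpha$ with $p^+$ (the case where $f$ meets $q^+$ is symmetric). Writing the unit cycle of $f$ at $\operatorname{t}(\alpha)$ in the form $\beta\alpha = \sigma_{\operatorname{t}(\alpha)}$, the dimer relations \eqref{I def}, combined with the equality of all unit cycles at a given vertex (Lemma~\ref{sigma}) and the identity $\sigma_j r = r\sigma_i$ for any path $r \in e_j A e_i$ (obtained from the centrality of $\sum_i \sigma_i$), should let me replace $\alpha$ inside $p$ at the cost of multiplying by at most one factor of $\sigma_i$. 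This produces a path $p'$ whose lift bounds a region with $q^+$ containing one fewer face, and applying the inductive hypothesis to $(p',q)$ and absorbing the accumulated unit cycles yields $p\sigma_i^m = q\sigma_i^n$ for some $m,n \geq 0$.

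For (2), apply $\bar{\tau}$ to (1); using $\bar{\tau}(\sigma_i) = \sigma$ with $\sigma := \prod_{D \in \mathcal{S}} x_D$ and the domain property of $B$, rearrangement gives $\bar{\tau}(p) = \bar{\tau}(q)\sigma^{n-m}$. For (3), given a witness $r$ with $rp = rq \neq 0$ (the case $pr = qr$ being symmetric), applying $\tau$ yields $\bar{\tau}(r)\bigl(\bar{\tau}(p) - \bar{\tau}(q)\bigr) = 0$ in $B$; since $\bar{\tau}(r)$ is a nonzero monomial in the $x_D$ and $B$ is a domain, $\bar{\tau}(p) = \bar{\tau}(q)$, and the identical argument with $\eta$ gives $\bar{\eta}(p) = \bar{\eta}(q)$. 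For (4), applying $\bar{\eta}$ to (1) yields $\bar{\eta}(p)\sigma^m = \bar{\eta}(q)\sigma^n$ with $\sigma := \prod_{D \in \mathcal{P}} x_D$; the hypothesis $\bar{\eta}(p) = \bar{\eta}(q)$ combined with the nonvanishing of this common monomial forces $\sigma^m = \sigma^n$, and since $\mathcal{P} \neq \emptyset$ makes $\sigma$ a nonconstant monomial in the domain $B$, $m = n$. The distinctness $p \neq q$ then forces $m \geq 1$, so $r := \sigma_i^m$ witnesses that $p,q$ is a non-cancellative pair, with $pr = p\sigma_i^m \neq 0$ because $\bar{\eta}(pr) = \bar{\eta}(p)\sigma^m$ is a nonzero element of $B$.

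The main obstacle I expect is the face-by-face induction in (1): verifying that each local move corresponds to multiplication by exactly one power of $\sigma_i$, and tracking the contributions from the $p^+$-side versus the $q^+$-side of $\mathcal{R}_{p^+,q^+}$ so that the accumulated unit cycles land on the correct sides of the final equation. Once (1) is in hand, parts (2)--(4) reduce to formal algebra in the polynomial domain $B$.
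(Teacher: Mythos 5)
Parts (2)--(4) of your proposal are correct and essentially identical to the paper's arguments: apply $\bar{\tau}$ or $\bar{\eta}$ to the identity from (1), use that $B$ is an integral domain and that $\sigma \neq 1$ when $\mathcal{P} \neq \emptyset$, and take $r = \sigma_i^m$ as the cancellation witness in (4). The substance is part (1), and there the inductive step you sketch does not go through as stated. The local move ``replace $\alpha$ inside $p$ by going around the other side of the adjacent face $f$'' is not available in a dimer quiver: the boundary of $f$ is an \emph{oriented} unit cycle $\beta\alpha$ with $\beta$ running from $\operatorname{h}(\alpha)$ back to $\operatorname{t}(\alpha)$, so the complement of $\alpha$ in $\partial f$ points the wrong way and cannot be substituted for $\alpha$. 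The relations \eqref{I def} never rewrite a single arrow; they rewrite a path $s$ that already traverses all but one arrow of a unit cycle into the complementary path $t$ around the \emph{other} face adjacent to that arrow, and such a configuration need not occur along $p$. Attempting to force the issue by inserting $\sigma_{\operatorname{t}(\alpha)} = \beta\alpha$ only reproduces $p\sigma_i$, since $\alpha\beta\alpha = \sigma_{\operatorname{h}(\alpha)}\alpha$; the enclosed face count does not drop.

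The paper circumvents this by peeling off an entire layer of faces at once rather than one face at a time. Factor $p = p_m \cdots p_1$ and $q = q_n \cdots q_1$ into subpaths, each completed to a unit cycle by a complementary path $p'_{\alpha}$ (resp.\ $q'_{\beta}$) whose lift lies inside $\mathcal{R}_{p,q}$ (Figure~\ref{purple}). The reversed concatenations $p' := p'_1 \cdots p'_m$ and $q' := q'_1 \cdots q'_n$ satisfy $p'p = \sigma_i^c$ and $q'q = \sigma_i^d$ by Lemma~\ref{sigma}, have coincident lifted endpoints, and bound a region strictly contained in $\mathcal{R}_{p,q}$; the inductive hypothesis applied to the pair $(p', q')$ gives $p'\sigma^{c'} = q'\sigma^{d'}$ (unit cycles based at their common tail), and combining these identities using the centrality of $\sum_i \sigma_i$ yields $p\sigma_i^{d+d'} = q\sigma_i^{c+c'}$. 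If you wish to retain a face-counting induction, you must replace your local move either by this layer-peeling step or by the genuine two-face move $s \mapsto t$, applied only where a full ``all-but-one-arrow'' subpath of a unit cycle actually appears in $p$ --- and in the latter case you would still have to argue that such a configuration can always be found and that the region strictly decreases.
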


\begin{proof}
(1) We proceed by induction on the region $\mathcal{R}_{p,q} \subset \mathbb{R}^2$ bounded by $p^+$ and $q^+$, with respect to inclusion.

If there are unit cycles $sa$ and $ta$ with $a \in Q_1$, and $\mathcal{R}_{p,q} = \mathcal{R}_{s,t}$, then the claim is clear.
So suppose the claim holds for all pairs of paths $s,t$ such that
$$\operatorname{t}(s^+) = \operatorname{t}(t^+), \ \ \ \operatorname{h}(s^+) = \operatorname{h}(t^+), \ \ \text{ and } \ \ \mathcal{R}_{s,t} \subset \mathcal{R}_{p,q}.$$

Factor $p$ and $q$ into subpaths,
$$p = p_m p_{m-1} \cdots p_1 \ \ \text{ and } \ \ q = q_n q_{n-1} \cdots q_1,$$
such that for each $1 \leq \alpha \leq m$ and $1 \leq \beta \leq n$, there are paths $p'_{\alpha}$ and $q'_{\beta}$ for which
$$p'_{\alpha}p_{\alpha} \ \ \text{ and } \ \ q'_{\beta}q_{\beta}$$
are unit cycles, and ${p'_{\alpha}}^+$ and ${q'_{\beta}}^+$ lie in the region $\mathcal{R}_{p,q}$.
See Figure \ref{purple}.
Note that if $p_{\alpha}$ is itself a unit cycle, then $p'_{\alpha}$ is the vertex $\operatorname{t}(p_{\alpha})$, and similarly for $q_{\beta}$.

Consider the paths
$$p' := p'_1p'_2 \cdots p'_m \ \ \text{ and } \ \ q' := q'_1q'_2 \cdots q'_n.$$
Then by Lemma \ref{sigma} there is some $c,d \geq 0$ such that
\begin{equation*} \label{p'p =}
p'p = \sigma_i^c \ \ \text{ and } \ \ q'q = \sigma_i^d.
\end{equation*}

Now
$$\operatorname{t}(p'^+) = \operatorname{t}(q'^+), \ \ \ \operatorname{h}(p'^+) = \operatorname{h}(q'^+), \ \ \text{ and } \ \ \mathcal{R}_{p',q'} \subset \mathcal{R}_{p,q}.$$
Thus, by induction, there is some $c',d' \geq 0$ such that
$$p' \sigma_i^{c'} = q' \sigma_i^{d'}.$$
Therefore by Lemma \ref{sigma},
$$p \sigma_i^{d+d'} = p q'q \sigma_i^{d'} = pq'\sigma_j^{d'}q = pp'\sigma_j^{c'}q = p p' q \sigma_i^{c'} = q \sigma_i^{c+c'},$$
proving our claim.

(2) By Claim (1) there is some $m, n \geq 0$ such that
\begin{equation} \label{sigmaj}
p \sigma^m_i = q \sigma_i^n.
\end{equation}
Thus by Lemma \ref{tau'A'},
\begin{equation} \label{sigmaj2}
\bar{\eta}(p)\sigma^m = \bar{\eta}\left( p\sigma^m_i \right) = \bar{\eta}\left( q\sigma_i^n \right) = \bar{\eta}(q)\sigma^n \in B.
\end{equation}
Claim (2) then follows since $B$ is an integral domain.
A similar result holds for $\bar{\tau}$ in place of $\bar{\eta}$ by setting each non-simple perfect matching $x \in \mathcal{P}$ equal to $1$ in (\ref{sigmaj2}).  

(3) Suppose $p,q$ is a non-cancellative pair.
Then there is a path $r$ such that
$$rp = rq \not = 0 \ \ \ \text{ or } \ \ \ pr = qr \not = 0;$$
say $rp = rq$.
Whence
$$\bar{\eta}(r)\bar{\eta}(p) = \bar{\eta}(rp) = \bar{\eta}(rq) = \bar{\eta}(r)\bar{\eta}(q),$$
by Lemma \ref{tau'A'}.
Therefore $\bar{\eta}(p) = \bar{\eta}(q)$ since $B$ is an integral domain.
Similarly, $\bar{\tau}(p) = \bar{\tau}(q)$.

(4) Finally, suppose $\mathcal{P} \not = \emptyset$ and $\bar{\eta}(p) = \bar{\eta}(q)$.
Set $\sigma := \prod_{x \in \mathcal{P}} x$.
Then (\ref{sigmaj2}) implies
$$\sigma^m = \sigma^n$$
since $B$ is an integral domain (with $\bar{\eta}$ in place of $\bar{\tau}$).
By assumption, $\mathcal{P} \not = \emptyset$.
Whence $\sigma \not = 1$.
Therefore $m = n$.
Consequently, the path
$$r = \sigma_i^m$$
satisfies $pr = qr \not = 0$ by (\ref{sigmaj}).
\end{proof}

\begin{figure}
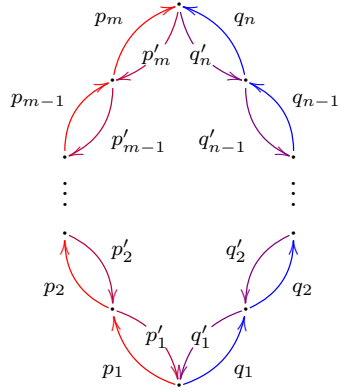

$$\xy 0;/r.3pc/:
(0,-20)*{\cdot}="1";(-7,-12)*{\cdot}="2";(-12,-4)*{\cdot}="3";
(-12,4)*{\cdot}="4";(-7,12)*{\cdot}="5";(0,20)*{\cdot}="6";
(7,-12)*{\cdot}="7";(12,-4)*{\cdot}="8";
(12,4)*{\cdot}="9";(7,12)*{\cdot}="10";
(-12,1)*{\vdots}="";(12,1)*{\vdots}="";
{\ar@/^/^{p_1}@[red]"1";"2"};{\ar@/^/^{p_2}@[red]"2";"3"};
{\ar@/^/^{p_{m-1}}@[red]"4";"5"};{\ar@/^/^{p_m}@[red]"5";"6"};
{\ar@/_/_{q_1}@[blue]"1";"7"};{\ar@/_/_{q_2}@[blue]"7";"8"};
{\ar@/_/_{q_{n-1}}@[blue]"9";"10"};{\ar@/_/_{q_n}@[blue]"10";"6"};
{\ar@/^/|-{p'_1}@[purple]"2";"1"};{\ar@/^/^{p'_2}@[purple]"3";"2"};
{\ar@/^/^{p'_{m-1}}@[purple]"5";"4"};{\ar@/^/|-{p'_m}@[purple]"6";"5"};
{\ar@/_/|-{q'_1}@[violet]"7";"1"};{\ar@/_/_{q'_2}@[violet]"8";"7"};
{\ar@/_/_{q'_{n-1}}@[violet]"10";"9"};{\ar@/_/|-{q'_n}@[violet]"6";"10"};
\endxy$$
\caption{Setup for Lemma \ref{here2}.1.  The paths $p = p_m \cdots p_1$, $q = q_n \cdots q_1$, $p' = p' _1\cdots p'_m$, and $q' = q'_1 \cdots q'_n$ are drawn in red, blue, purple, and violet respectively.  Each product $p'_{\alpha}p_{\alpha}$ and $q'_{\beta}q_{\beta}$ is a unit cycle.  Note that the region $\mathcal{R}_{p',q'}$ is properly contained in the region $\mathcal{R}_{p,q}$.}
\label{purple}
\end{figure}

\newpage

\begin{Lemma} \label{r+} \
\begin{enumerate}
 \item Suppose paths $p,q$ are either equal modulo $I$, or form a non-cancellative pair.
Then their lifts $p^+$ and $q^+$ bound a compact region $\mathcal{R}_{p,q}$ in $\mathbb{R}^2$.
 \item Suppose paths $p,q$ are equal modulo $I$.
If $i^+$ is a vertex in $\mathcal{R}_{p,q}$, then there is a path $r^+$ from $\operatorname{t}(p^+)$ to $\operatorname{h}(p^+)$ that is contained in $\mathcal{R}_{p,q}$, passes through $i^+$, and satisfies
\begin{equation*} \label{r++}
p = r = q \ \ \text{ (modulo $I$)}.
\end{equation*}
\end{enumerate}
\end{Lemma}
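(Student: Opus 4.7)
\textbf{Proof plan for Lemma \ref{r+}.}

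For part (1), the strategy is to show that under either hypothesis the lifts $p^+$ and $q^+$ share both tail and head in $\mathbb{R}^2$; two such paths in the plane automatically form a closed curve enclosing a compact region. \emph{Step 1a.} I first handle a single generating relation $s-t$ of $I$ in (\ref{I def}), so $as$ and $at$ are unit cycles for some $a \in Q_1$. Fix a lift $a^+$ of $a$. The two faces of $\mathbb{R}^2 \setminus Q^+$ incident to $a^+$ are bounded by the unique lifts of the two unit cycles containing $a$ that pass through $a^+$; therefore $s^+$ and $t^+$, each started at $\operatorname{h}(a^+)$, both terminate at $\operatorname{t}(a^+)$ and bound these two faces. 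In particular, replacing a subpath $s$ by $t$ in any word $usv$ does not change the tail or head of the lift. \emph{Step 1b.} For general $p = q$ in $A$ (both nonzero), there is a finite chain of paths $p = p_0, p_1, \dots, p_k = q$ where each $p_{j+1}$ differs from $p_j$ by one such substitution; induction on $k$ via Step 1a gives $\operatorname{t}(p^+) = \operatorname{t}(q^+)$ and $\operatorname{h}(p^+) = \operatorname{h}(q^+)$. \emph{Step 1c.} For a non-cancellative pair with, say, $rp = rq \ne 0$, apply Step 1b to the equality $rp = rq$ in $A$: starting from a common tail $\operatorname{t}(p^+) = \operatorname{t}(q^+)$, write $(rp)^+ = r_1^+ p^+$ and $(rq)^+ = r_2^+ q^+$. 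Step 1b gives $\operatorname{h}(r_1^+) = \operatorname{h}(r_2^+)$, and since a lift of a path in $Q^+$ is determined by its head, $r_1^+ = r_2^+$; hence $\operatorname{h}(p^+) = \operatorname{t}(r_1^+) = \operatorname{t}(r_2^+) = \operatorname{h}(q^+)$.

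For part (2), my plan is a sweeping argument across $\mathcal{R}_{p,q}$. I will show, by induction on the number of faces of $Q^+$ contained in $\mathcal{R}_{p,q}$, that the chain $p = p_0, p_1, \dots, p_k = q$ from Step 1b can be chosen so that each intermediate lift $p_j^+$ lies in $\mathcal{R}_{p,q}$ and each face of $\mathcal{R}_{p,q}$ is swept exactly once. The base case is when $\mathcal{R}_{p,q}$ is a single face, where $p^+$ and $q^+$ form the two complementary boundary arcs of that face and a single application of the generating relation at this face carries $p$ to $q$. For the inductive step I choose a face $F$ of $\mathcal{R}_{p,q}$ adjacent to $p^+$, apply the generating relation associated with $F$ to obtain $p_1$ with $p_1^+$ lying in $\mathcal{R}_{p,q}$ and $\mathcal{R}_{p_1,q}$ having one fewer face than $\mathcal{R}_{p,q}$, and invoke the induction hypothesis on $p_1 = q$. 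Any vertex $i^+ \in \mathcal{R}_{p,q}$ is a corner of some face of $\mathcal{R}_{p,q}$, so it appears on some intermediate lift $p_j^+$; setting $r := p_j$ yields a path in $\mathcal{R}_{p,q}$ through $i^+$ with $p = r = q$ in $A$ (since every $p_j$ is equal to $p$ and to $q$ in $A$).

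The main obstacle will be implementing the sweep cleanly in the inductive step: one must show that a face of $\mathcal{R}_{p,q}$ adjacent to $p^+$ always exists when $\mathcal{R}_{p,q}$ contains more than one face, and that the flip across such a face honestly realizes a generating relation of $I$ and strictly decreases the face count of $\mathcal{R}_{p,q}$. Both rely on the dimer condition: each face of $Q^+$ is bounded by a unit cycle, so the two boundary arcs of any face $F$ adjacent to $p^+$ share tail and head, and the substitution along $F$ is of the form $s \mapsto t$ with $as, at$ unit cycles. Planarity of $\mathcal{R}_{p,q}$ then ensures the new lift $p_1^+$ lies inside the original region and that the sweep terminates after exactly $|\{\text{faces of } \mathcal{R}_{p,q}\}|$ steps.
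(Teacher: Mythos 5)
Your proposal is correct and follows essentially the same route as the paper: part (1) by lifting the generating relations of $I$ to homotopies of paths in $Q^+$ (so that lifts of paths equal modulo $I$ share tails and heads), with the non-cancellative case reduced to this via $rp = rq$ and uniqueness of lifts, and part (2) by a face-sweeping argument in which every vertex of a swept face lies on an intermediate path of the substitution chain. The only cosmetic discrepancy is that a generating relation $s-t$ with $as$, $at$ unit cycles moves the path across the \emph{two} faces sharing the arrow $a$, so the sweep covers faces in pairs rather than exactly one per step; this does not affect the argument.
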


\begin{proof}
(1.i) First suppose $p,q$ are equal modulo $I$.
The relations generated by $I$ in (\ref{I def}) lift to homotopy relations on the paths of $Q^+$.
Thus $\operatorname{t}(p^+) = \operatorname{t}(q^+)$ and $\operatorname{h}(p^+) = \operatorname{h}(q^+)$.
Therefore $p^+$ and $q^+$ bound a compact region in $\mathbb{R}^2$.

(1.ii) Now suppose $p,q$ is a non-cancellative pair.
Then there is a path $r$ such that $rp = rq \not = 0$, say.
In particular, $\operatorname{t}((rp)^+) = \operatorname{t}((rq)^+)$ and $\operatorname{h}((rp)^+) = \operatorname{h}((rq)^+)$ by Claim (1.i.).
Therefore $\operatorname{t}(p^+) = \operatorname{t}(q^+)$ and $\operatorname{h}(p^+) = \operatorname{h}(q^+)$ as well.

(2) The ideal $I$ is generated by relations of the form $s - t$, where there is an arrow $a$ such that $sa$ and $ta$ are unit cycles.
The claim then follows since each trivial subpath of the unit cycle $sa$ (resp.\ $ta$) is a trivial subpath of $s$ (resp.\ $t$).
\end{proof}

\begin{Definition} \label{removable2} \rm{
A unit cycle $\sigma_i \in A$ of length 2 is a \textit{removable 2-cycle} if the two arrows it is composed of are redundant generators for $A$, and otherwise $\sigma_i$ is a \textit{permanent 2-cycle}.
}\end{Definition}

\begin{Lemma} \label{permanent 2-cycles}
There are precisely two types of permanent 2-cycles, given in Figures \ref{2-cycle}.ii and \ref{2-cycle}.iii.
Consequently, if a dimer algebra $A$ has a permanent 2-cycle, then $A$ is degenerate.
\end{Lemma}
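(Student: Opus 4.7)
My plan is to analyze the local combinatorial structure around a length-2 unit cycle $\sigma = ab$ at a vertex $i$ (with $b : i \to j$ and $a: j \to i$), using the dimer relations to pin down when both arrows must be non-redundant generators. Each of $a$, $b$ lies on exactly two faces of the surface embedding: the 2-cycle face itself and a second face, giving unit cycles $a p_a$ and $b p_b$ where $p_a: i \to j$ and $p_b: j \to i$. Applying the relations (\ref{I def}) at the arrows $a$ and $b$ yields the equalities $b = p_a$ and $a = p_b$ in $A$.

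The key observation is that $b$ is a redundant generator of $A$ if and only if $p_a$ does not contain $b$ as a subarrow, and symmetrically for $a$ and $p_b$. Since $b$ lies on only two faces (the 2-cycle and the second face at $b$), $p_a$ contains $b$ if and only if the second face at $a$ coincides with the second face at $b$; by symmetry this is equivalent to $p_b$ containing $a$. Consequently $\sigma$ is permanent precisely when its two arrows share a common second face $F$. Writing the boundary of $F$ cyclically as $b,\, s,\, a,\, r$ with $s: j \to j$ and $r: i \to i$ (at least one nontrivial, since $F$ is strictly larger than the 2-cycle), I would enumerate the possible topologies for $F$ in the torus embedding. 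The dimer axioms (each face simply connected, each arrow lying on exactly two faces), together with the constraint that $\sigma$ and $F$ together are locally embedded in $T^2$, force this configuration to be one of the two drawn in Figures \ref{2-cycle}.ii and \ref{2-cycle}.iii.

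For the degeneracy conclusion, I would verify in each of the two configurations that some arrow cannot appear in any perfect matching. A perfect matching selects exactly one arrow from each unit cycle, so the choice on the 2-cycle $\sigma$ — either $a$ or $b$ — propagates into $F$ (the other arrows of $F$, i.e.\ those making up $s$ and $r$, cannot then be selected via $F$) and thereby into the adjacent faces through the arrows of $s$ and $r$. Tracking these constraints in each of the two local pictures, one identifies an arrow forced to satisfy mutually incompatible requirements coming from its two faces, so it belongs to no perfect matching; hence $A$ is degenerate.

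The main obstacle is the combinatorial enumeration showing that Figures \ref{2-cycle}.ii and \ref{2-cycle}.iii exhaust all permanent configurations. Permanence algebraically reduces to the clean condition $F_a = F_b$, but translating this into an exhaustive local picture requires careful use of simple connectedness of faces, the dimer axiom that each arrow bounds exactly two faces, and the injectivity of the embedding of $Q$ into $T^2$ near $\sigma \cup F$. Once the finite list of local configurations is in hand, both the classification and the degeneracy conclusion reduce to direct finite checks on each picture.
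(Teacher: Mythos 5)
Your proposal follows essentially the same route as the paper: identify the complementary unit cycles $sa$ and $tb$, observe that permanence forces $b$ to be a subpath of $s$ (equivalently, that $a$ and $b$ share their second face), and then enumerate the resulting local configurations to land on the two pictures of Figure \ref{2-cycle}. Your explicit argument for the degeneracy conclusion (any arrow of the shared face other than $a,b$ lies in no perfect matching, since a matching must already select one of $a,b$ from the 2-cycle) is correct and in fact spells out a step the paper leaves implicit.
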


\begin{proof}
Let $ab$ be a permanent 2-cycle, with $a,b \in Q_1$.
Let
$$\sigma_{\operatorname{t}(a)} = sa \ \ \text{ and } \ \ \sigma'_{\operatorname{t}(b)} = tb$$
be the complementary unit cycles to $ab$ containing $a$ and $b$ respectively.
Since $ab$ is permanent, $b$ is a subpath of $s$, or $a$ is a subpath of $t$.

Suppose $b$ is a subpath of $s$.
Then there are (possibly trivial) paths $p,q$ such that $s = qbp$.
We therefore have either case (ii) (if $q$ is null-homotopic as a loop) or case (iii) (if $q$ is not null-homotopic) shown in Figure \ref{2-cycle}.
\end{proof}

\begin{figure}
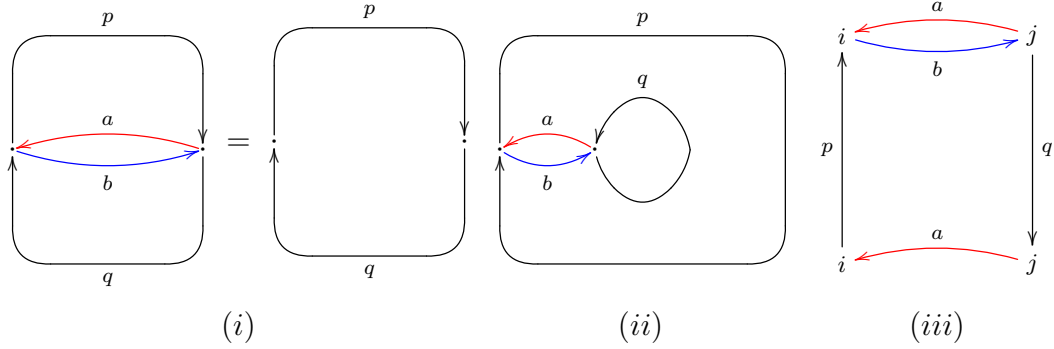

$$\begin{array}{ccc}
\xy 0;/r.6pc/:
(-5,0)*{\cdot}="1";(-5,4)*{}="2";
(-3,6)*{}="3";(3,6)*{}="4";
(5,4)*{}="5";(5,0)*{\cdot}="6";(5,-4)*{}="7";
(3,-6)*{}="8";(-3,-6)*{}="9";
(-5,-4)*{}="10";
{\ar@{-}"1";"2"};{\ar@{-}@/^.45pc/"2";"3"};{\ar@{-}^p"3";"4"};{\ar@{-}@/^.45pc/"4";"5"};{\ar@{->}"5";"6"};
{\ar@{-}"6";"7"};{\ar@{-}@/^.45pc/"7";"8"};{\ar@{-}^q"8";"9"};{\ar@{-}@/^.45pc/"9";"10"};{\ar@{->}"10";"1"};
{\ar@/_/_a@[red]"6";"1"};{\ar@/_/_b@[blue]"1";"6"};
\endxy
\ = \
\xy 0;/r.6pc/:
(-5,0)*{\cdot}="1";(-5,4)*{}="2";
(-3,6)*{}="3";(3,6)*{}="4";
(5,4)*{}="5";(5,0)*{\cdot}="6";(5,-4)*{}="7";
(3,-6)*{}="8";(-3,-6)*{}="9";
(-5,-4)*{}="10";
{\ar@{-}"1";"2"};{\ar@{-}@/^.45pc/"2";"3"};{\ar@{-}^p"3";"4"};{\ar@{-}@/^.45pc/"4";"5"};{\ar@{->}"5";"6"};
{\ar@{-}"6";"7"};{\ar@{-}@/^.45pc/"7";"8"};{\ar@{-}^q"8";"9"};{\ar@{-}@/^.45pc/"9";"10"};{\ar@{->}"10";"1"};
\endxy
&
\xy 0;/r.6pc/:
(-8,0)*{\cdot}="1";(-8,4)*{}="2";
(-6,6)*{}="3";(5,6)*{}="4";
(7,4)*{}="5";(7,-4)*{}="7";
(5,-6)*{}="8";(-6,-6)*{}="9";
(-8,-4)*{}="10";(-3,0)*{\cdot}="6";
(2,0)*{}="11";
(1,-2)*{}="12";(1,2)*{}="13";
(-2,-2)*{}="14";(-2,2)*{}="15";
{\ar@{-}"1";"2"};{\ar@{-}@/^.45pc/"2";"3"};{\ar@{-}^p"3";"4"};{\ar@{-}@/^.45pc/"4";"5"};{\ar@{-}"5";"7"};
{\ar@{-}@/^.45pc/"7";"8"};{\ar@{-}"8";"9"};{\ar@{-}@/^.45pc/"9";"10"};{\ar@{->}"10";"1"};
{\ar@/_/_a@[red]"6";"1"};{\ar@/_/_b@[blue]"1";"6"};
{\ar@{-}@/_.45pc/"14";"12"};{\ar@{-}@/_.45pc/_q"13";"15"};
{\ar@{-}@/_.1pc/"12";"11"};{\ar@{-}@/_.1pc/"11";"13"};
{\ar@/_.1pc/"15";"6"};{\ar@{-}@/_.1pc/"6";"14"};
\endxy
&
\xy 0;/r.6pc/:
(-5,-6)*+{\text{\scriptsize{$i$}}}="1";(5,-6)*+{\text{\scriptsize{$j$}}}="2";(-5,6)*+{\text{\scriptsize{$i$}}}="3";(5,6)*+{\text{\scriptsize{$j$}}}="4";
{\ar^p"1";"3"};{\ar^q"4";"2"};
{\ar@/_/_a@[red]"4";"3"};{\ar@/_/_b@[blue]"3";"4"};
{\ar@/_/_a@[red]"2";"1"};
\endxy
\\
(i) & (ii) & (iii)
\end{array}$$
\caption{Cases for Lemma \ref{permanent 2-cycles}.  In each case, $a$ and $b$ are arrows, and $p$ and $q$ are paths.  In case (i) $ap$, $bq$, $ab$ are unit cycles; in cases (ii) and (iii) $qbpa$, $ab$ are unit cycles, and $p$, $q$ are cycles.
(In case (ii), $q$ may be a vertex, and $q$ need not be a unit cycle, that is, $q^+$ may contain arrows in its interior.)
In case (i) $ab$ is a removable 2-cycle, and in cases (ii) and (iii) $ab$ is a permanent 2-cycle.}
\label{2-cycle}
\end{figure}

\begin{Notation} \rm{
By a \textit{cyclic subpath} of a path $p$, we mean a subpath of $p$ that is a nontrivial cycle.
Consider the following sets of cycles in $A$:
\begin{itemize}
 \item Let $\mathcal{C}$ be the set of cycles in $A$ (i.e., cycles in $Q$ modulo $I$).
 \item For $u \in \mathbb{Z}^2$, let $\mathcal{C}^u$ be the set of cycles $p \in \mathcal{C}$ such that
$$\operatorname{h}(p^+) = \operatorname{t}(p^+) + u \in Q_0^+.$$
 \item For $i \in Q_0$, let $\mathcal{C}_i$ be the set of cycles in the vertex corner ring $e_iAe_i$.
 \item Let $\hat{\mathcal{C}}$ be the set of cycles $p \in \mathcal{C}$ such that the lift of each cyclic permutation of each representative of $p$ does not have a cyclic subpath.
\end{itemize}
We decorate $\mathcal{C}$ so as to specify a set of cycles; e.g., $\hat{\mathcal{C}}^u_i := \hat{\mathcal{C}} \cap \mathcal{C}^u \cap \mathcal{C}_i$.  
Note that $\mathcal{C}^0 := \mathcal{C}^{(0,0)}$ is the set of cycles whose lifts are cycles in $Q^+$.
In particular, $\hat{\mathcal{C}}^0 = Q_0$. 
Furthermore, the lift of any cycle $p$ in $\hat{\mathcal{C}}$ has no cyclic subpaths, although $p$ itself may have cyclic subpaths.
}\end{Notation}

\begin{Lemma} \label{here3'}
Suppose $A$ does not have a non-cancellative pair where one of the paths is a vertex.
Let $p$ be a nontrivial cycle.
\begin{enumerate}
 \item If $p \in \mathcal{C}^0$, then $\overbar{p} = \sigma^m$ for some $m \geq 1$.
 \item If $p \in \mathcal{C}^0$ and $A$ is cancellative, then $p = \sigma_{\operatorname{t}(p)}^m$ for some $m \geq 1$.
 \item If $p \in \mathcal{C} \setminus \hat{\mathcal{C}}$, then $\sigma \mid \overbar{p}$.
 \item If $p$ is a path for which $\sigma \nmid \overbar{p}$, then $p$ is a subpath of a cycle in $\hat{\mathcal{C}}$.
\end{enumerate}
\end{Lemma}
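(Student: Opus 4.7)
My plan is to treat the four claims in order, using Lemma \ref{here2} and the vertex-free non-cancellative hypothesis for (1), and then bootstrapping.

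For (1), since $p \in \mathcal{C}^0$, its lift $p^+$ is a closed loop, so $\operatorname{t}(p^+) = \operatorname{h}(p^+) = i^+$, which matches the lift of the unit cycle $\sigma_i$. Applying Lemma \ref{here2}.1 to the pair $(p,\sigma_i)$ gives $p\,\sigma_i^a = \sigma_i^{b+1}$ in $A$ for some $a,b \geq 0$. Pushing through $\bar{\tau}$ and using $\bar{\tau}(\sigma_i) = \sigma$ yields $\overbar{p}\,\sigma^a = \sigma^{b+1}$ in the polynomial domain $B$, so by unique factorization $a \le b+1$ and $\overbar{p} = \sigma^{b+1-a}$. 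It remains to rule out $b+1-a = 0$. If it held, the relation $p\,\sigma_i^a = \sigma_i^a = e_i \cdot \sigma_i^a \neq 0$ in $A$ would exhibit $(p,e_i)$ as a non-cancellative pair with a vertex, contradicting the hypothesis. Hence $m := b+1-a \geq 1$.

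For (2), we reuse the intermediate identity $p\,\sigma_i^a = \sigma_i^m \cdot \sigma_i^a$ derived above. Since $\sigma_i^a \neq 0$ and $A$ is cancellative by hypothesis, we may right-cancel to obtain $p = \sigma_i^m$ directly in $A$, as claimed.

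For (3), suppose $p \in \mathcal{C} \setminus \hat{\mathcal{C}}$, so by definition every representative $\tilde{p}$ admits a cyclic permutation whose lift contains a cyclic subpath $c^+$. That subpath descends to a nontrivial cycle $c \in \mathcal{C}^0$. Writing the cyclic permutation as $ucv$ and applying part (1) to $c$ gives $\overbar{c} = \sigma^k$ for some $k \geq 1$. Since $\bar{\tau}$ of a cycle depends only on the multiset of arrows traversed and not on the base vertex, cyclic permutations have the same $\bar{\tau}$-value, so $\overbar{p} = \overbar{u}\,\overbar{v}\,\sigma^k$ and in particular $\sigma \mid \overbar{p}$.

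For (4), I switch to the contrapositive perspective using the fact that for each arrow $a$ and each simple matching $D$, the variable $x_D$ appears in $\bar{\tau}(a)$ exactly when $a \in D$; thus $\sigma \mid \overbar{p}$ if and only if every simple matching of $Q$ contains at least one arrow of $p$. So assume $\sigma \nmid \overbar{p}$ and pick a simple matching $D \in \mathcal{S}$ with no arrow in common with $p$. By definition of simple matching there is an oriented path $q$ in $Q \setminus D$ from $\operatorname{h}(p)$ to $\operatorname{t}(p)$, and the cycle $qp$ then uses no arrow of $D$, giving $x_D \nmid \overbar{qp}$ and hence $\sigma \nmid \overbar{qp}$. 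By the contrapositive of (3), $qp \in \hat{\mathcal{C}}$, and $p$ is a subpath of this cycle. The main obstacle is really just the boundary case $m = 0$ in (1); everything else is bookkeeping that hinges on $B$ being a unique factorization domain in which $\sigma$ is a squarefree monomial.
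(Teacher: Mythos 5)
Your proof is correct and follows essentially the same route as the paper's: part (1) via Lemma \ref{here2}.1 with the vertex--non-cancellative-pair hypothesis ruling out the exponent-zero case, (2) by cancellation, (3) by passing to a cyclic subpath in $\mathcal{C}^0$ and invoking (1), and (4) by extending $p$ through $Q \setminus D$ for a simple matching $D$ with $x_D \nmid \overbar{p}$ and applying the contrapositive of (3). The only cosmetic difference is that you compare $p$ with $\sigma_i$ rather than with the trivial path $e_i$ in part (1), which changes nothing of substance.
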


\begin{proof}
(1) Suppose $p \in \mathcal{C}^0$, that is, $p^+$ is a cycle in $Q^+$.
Set $i := \operatorname{t}(p)$.
By Lemma \ref{here2}.1, there is some $m,n \geq 0$ such that
\begin{equation} \label{n > m}
p\sigma_i^m = \sigma_i^n.
\end{equation}
If $m \geq n$, then the paths $p \sigma_i^{m-n}$ and $e_{\operatorname{t}(p)}$ form a non-cancellative pair.
But this is contrary to assumption.
Thus $n - m \geq 1$.

Furthermore, $\bar{\tau}$ is an algebra homomorphism on $e_iAe_i$ by Lemma \ref{tau'A'}.
In particular, (\ref{n > m}) implies
$$\overbar{p}\sigma^m = \sigma^n.$$
Therefore $\overbar{p} = \sigma^{n - m}$ since $B$ is an integral domain.

(2) If $A$ is cancellative, then (\ref{n > m}) implies $p = \sigma_i^{n-m}$.

(3) Suppose $p \in \mathcal{C} \setminus \hat{\mathcal{C}}$.
Then there is a cyclic permutation of $p^+$ which contains a cyclic subpath $q^+$.
In particular, $\overbar{q} \mid \overbar{p}$.
Furthermore, since $q \in \mathcal{C}^0$, Claim (1) implies $\overbar{q} = \sigma^m$ for some $m \geq 1$.
Therefore $\sigma \mid \overbar{p}$.

(4) Let $p$ be a path for which $\sigma \nmid \overbar{p}$.
Then there is a simple matching $x \in \mathcal{S}$ such that $x \nmid \overbar{p}$.
In particular, $p$ is supported on $Q \setminus x$.
Since $x$ is simple, $p$ is a subpath of a cycle $q$ supported on $Q \setminus x$.
Whence $x \nmid \overbar{q}$.
Thus $\sigma \nmid \overbar{q}$.
Therefore $q$ is in $\hat{\mathcal{C}}$ by the contrapositive of Claim (3).
\end{proof}

\begin{Remark} \label{weird} \rm{
In Lemma \ref{here3'} we assumed that $A$ does not have a non-cancellative pair where one of the paths is a vertex.
Such non-cancellative pairs exist: consider the permanent 2-cycle in Figure \ref{2-cycle}.ii, and suppose $q$ is a trivial cycle. 
Then $p\sigma_{\operatorname{t}(p)} = pab = \sigma_{\operatorname{t}(p)}$.
In particular, the cycles $p, e_{\operatorname{t}(p)}$ form a non-cancellative pair.

Furthermore, it is possible for $m > n$ in (\ref{n > m}).
Consider a dimer algebra with the subquiver given in Figure \ref{weird fig}.
Here $a,b,c,d$ are arrows, $q$ is a nontrivial path, and $ad$, $bc$, $qdcba$ are unit cycles.
Let $m \geq 1$, and set $p := baq^{m+1}dc$.
Then
$$p \sigma_i^m = baq^{m+1}dc \sigma_i^m \stackrel{\textsc{(i)}}{=} ba(q \sigma_j)^m qdc \stackrel{\textsc{(ii)}}{=} b(aqd)^{m+1}c \stackrel{\textsc{(iii)}}{=} bc = \sigma_i.$$
Indeed, (\textsc{i}) holds by Lemma \ref{sigma}; (\textsc{ii}) holds since $\sigma_j = da$; and (\textsc{iii}) holds since $b = baqd$ in $A$.
Note, however, that such a dimer quiver does not admit a perfect matching.
}\end{Remark}

\begin{figure}
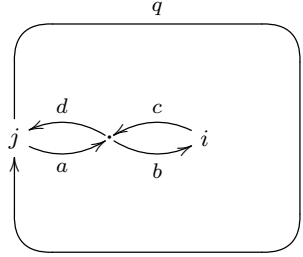

$$\xy 0;/r.6pc/:
(-8,0)*+{\text{\scriptsize{$j$}}}="1";(-8,4)*{}="2";
(-6,6)*{}="3";(5,6)*{}="4";
(7,4)*{}="5";(7,-4)*{}="7";
(5,-6)*{}="8";(-6,-6)*{}="9";
(-8,-4)*{}="10";
(-3,0)*{\cdot}="11";
(2,0)*+{\text{\scriptsize{$i$}}}="6";
{\ar@{-}"1";"2"};{\ar@{-}@/^.45pc/"2";"3"};{\ar@{-}^q"3";"4"};{\ar@{-}@/^.45pc/"4";"5"};{\ar@{-}"5";"7"};
{\ar@{-}@/^.45pc/"7";"8"};{\ar@{-}"8";"9"};{\ar@{-}@/^.45pc/"9";"10"};{\ar@{->}"10";"1"};
{\ar@/_/_c"6";"11"};{\ar@/_/_d"11";"1"};
{\ar@/_/_a"1";"11"};{\ar@/_/_b"11";"6"};
\endxy$$
\caption{Setup for Remark \ref{weird}.
Let $m \geq 1$.
Then the path $p := baq^{m+1}dc$ satisfies $p \sigma_i^m = \sigma_i$.}
\label{weird fig}
\end{figure}

\begin{Remark} \rm{
Some of the following results in this section that relate simple matchings, homotopy of paths, and cancellativity can be obtained by combining certain results of several articles \cite{MR, D, Br, Bo, IU}.
The proofs we give here, however, are new, independent, and self-contained, and are based on different methods. 

In particular, it was shown by Broomhead in \cite[Proposition 6.2]{Br} that if $A$ is a `geometrically consistent' dimer algebra, then for each $i, j \in Q_0^+$ there is a path $p^+$ from $i$ to $j$ such that, in our notation, $\overbar{p}$ is not divisible by some `extremal' (or `corner') perfect matching $x$.
This result was proceeded by similar results by Mozgovoy and Reineke in \cite[Sections 3-5]{MR}, and subsequently extended to cancellative dimer algebras by Bocklandt in \cite[Theorem 8.7]{Bo}. 
The latter built on the work of Davison \cite[Definition 2.5]{D}, which introduced cancellativity but did not consider perfect matchings.
It was then shown by Ishii and Ueda in \cite[Proposition 8.2]{IU}, a few years later, that simple matchings and extremal matchings coincide.
Putting these three results together we obtain Proposition \ref{here} below.

Similarly, a few of the lemmas in this section that involve simple matchings, namely Lemmas \ref{longlist}, \ref{easy injective}, \ref{generated by}, \ref{finally!'}, and Proposition \ref{circle 1} (see \cite[Proposition 1.2]{Br}), also follow from combining various results of these articles.
We emphasize, however, that simple matchings are defined quite differently from extremal matchings, and were not considered in \cite{Br}, \cite{Bo}, \cite{MR}, or \cite{D}.
These lemmas therefore do not follow directly from any of one these articles.
}\end{Remark}

For the remainder of this section, we assume that $Q$ has at least one perfect matching, $\mathcal{P} \not = \emptyset$, unless stated otherwise.

By the definition of $\hat{\mathcal{C}}$, each cycle $p \in \mathcal{C}^u_i \setminus \hat{\mathcal{C}}$ has a representative $\tilde{p}$ that factors into subpaths $\tilde{p} = \tilde{p}_3 \tilde{p}_2 \tilde{p}_1$, where
\begin{equation} \label{C0}
p_1p_3 = \tilde{p}_1 \tilde{p}_3 + I \in \mathcal{C}^0 \ \ \text{ and } \ \ p_2 = \tilde{p}_2 + I \in \hat{\mathcal{C}}^u.
\end{equation}

\begin{Proposition} \label{here}
If $\hat{\mathcal{C}}^u_i = \emptyset$ for some $u \in \mathbb{Z}^2$ and $i \in Q_0$, then $A$ is non-cancellative.
\end{Proposition}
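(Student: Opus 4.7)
I argue the contrapositive: assuming that $A$ is cancellative, I show that $\hat{\mathcal{C}}^u_i \neq \emptyset$. Since $A$ is cancellative, Lemma \ref{positive length cycle}.3 gives that $A$ has no non-cancellative pair involving a vertex, so the hypotheses of Lemma \ref{here3'} are in force throughout.

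First, $\mathcal{C}^u_i \neq \emptyset$: because each face of $Q^+$ is bounded by an oriented unit cycle, any arrow can be \emph{reversed} by traversing the rest of its face, so $Q^+$ is strongly connected in $\mathbb{R}^2$. A directed path from $i^+$ to $i^+ + u$ in $Q^+$ therefore exists, and projects to an element of $\mathcal{C}^u_i$.

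Fix $p \in \mathcal{C}^u_i$ of minimum length; it suffices to show $p \in \hat{\mathcal{C}}$. Suppose, for contradiction, that $p \notin \hat{\mathcal{C}}$. By the factorization (\ref{C0}), some representative of $p$ decomposes as $\tilde{p} = \tilde{p}_3 \tilde{p}_2 \tilde{p}_1$ with $p_1 p_3 \in \mathcal{C}^0_j$ and $p_2 \in \hat{\mathcal{C}}^u_j$ for the vertex $j = \operatorname{h}(p_1)$. By Lemma \ref{here3'}.2, $p_1 p_3 = \sigma_j^m$ for some $m \geq 0$; since the companion product $p_3 p_1$ also lies in $\mathcal{C}^0_i$, applying $\bar{\tau}$ and using commutativity of $B$ forces $p_3 p_1 = \sigma_i^m$ with the same $m$. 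If $m = 0$ then $p_1, p_3$ are trivial vertex paths, $j = i$, and $p = p_2 \in \hat{\mathcal{C}}^u_i$, already contradicting $\hat{\mathcal{C}}^u_i = \emptyset$.

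The remaining case $m \geq 1$, with $p_1 : i \to j$ and $p_3 : j \to i$ both nontrivial and $j \neq i$, is the main obstacle. My strategy is to combine Lemma \ref{here3'}.3 applied to $p_2 \in \hat{\mathcal{C}}$ (which supplies a simple matching $D$ such that $p_2$ uses no arrow of $D$) with the cancellative rigidity of Lemma \ref{here2} (which forces two paths between fixed endpoints of fixed winding to coincide up to a $\sigma_i^k$-multiple) and the minimality of $p$. Since $D$ is simple, $Q \setminus D$ is strongly connected; the plan is then to use the equalities $p_3 p_1 = \sigma_i^m$, $p_1 p_3 = \sigma_j^m$, and the centrality of $\sum_k \sigma_k$ (Lemma \ref{sigma}) to exhibit either a strictly shorter cycle in $\mathcal{C}^u_i$ (contradicting minimality of $p$) or a cycle at $i$ with winding $u$ supported on $Q \setminus D$ (which, by the contrapositive of Lemma \ref{here3'}.3, would lie in $\hat{\mathcal{C}}^u_i$, contradicting the hypothesis). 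The hardest step is making this rearrangement explicit: translating the global detour identity $p_3 p_1 = \sigma_i^m$ into a replacement cycle at $i$ that is either shorter or $D$-avoiding, when the rigidity of cancellativity already determines all paths of a fixed winding between two vertices up to a $\sigma_i^k$-multiple.
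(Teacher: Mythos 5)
There is a genuine gap: the case that carries all the content of the proposition is left as an unexecuted plan. Your reduction to the contrapositive, the observation that $\mathcal{C}^u_i \neq \emptyset$, and the disposal of the case $m=0$ are fine, but for $m \geq 1$ you only describe a ``strategy'' to produce either a shorter cycle in $\mathcal{C}^u_i$ or a $D$-avoiding cycle at $i$, without constructing either. The difficulty is real: knowing $p_3p_1 = \sigma_i^m$ and $p_1p_3 = \sigma_j^m$ does not let you excise the detour, because $p_2$ is based at $j$, not $i$, and there is no way to conjugate $p_2$ back to $i$ without first knowing something like Lemma \ref{columns and pillars} or the minimal-area surgery. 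The paper's proof instead fixes a pair $p,q \in \mathcal{C}^u_i$ whose doubled middles $p_3p_2^2p_1$, $q_3q_2^2q_1$ enclose $i^+ + u$, chooses representatives of minimal enclosed \emph{area} (not minimal length), uses Lemma \ref{r+}.2 to route a path $r$ through $i^+ + u$, factors $r = r_2r_1$ with $r_1,r_2 \in \mathcal{C}^u_i$, and derives the contradiction $\sigma^{n+\ell} = 1$ via $\bar{\eta}$. Nothing in your sketch substitutes for this step.

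Two of the tools your plan invokes are moreover not available at this point. First, you cite Lemma \ref{here3'}.3 as supplying, for $p_2 \in \hat{\mathcal{C}}$, a simple matching $D$ with $x_D \nmid \overbar{p}_2$; but that lemma states the opposite implication ($p \in \mathcal{C}\setminus\hat{\mathcal{C}} \Rightarrow \sigma \mid \overbar{p}$). The statement you need is Proposition \ref{circle 1}.1, which is proved \emph{after} and \emph{using} Proposition \ref{here}, so invoking it here is circular. Second, the existence of any simple matching for a cancellative $A$ (equivalently $\sigma \neq 1$ in your $\bar{\tau}$-arguments) is Lemma \ref{at least one}, which again is deduced from Proposition \ref{here}; the paper's proof deliberately works with $\bar{\eta}$ and the standing assumption $\mathcal{P} \neq \emptyset$ to avoid exactly this circularity. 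A minor additional point: path length is not an invariant of the class modulo $I$ (distinct faces may have unit cycles of different lengths), so ``minimum length'' should be taken over all representatives; but this is repairable, unlike the main gap.
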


\begin{proof}
In the following, we will use the notation (\ref{C0}).
Set $\sigma := \prod_{x \in \mathcal{P}} x$.

Suppose $\hat{\mathcal{C}}^u_i = \emptyset$.
Then $u$ is in $\mathbb{Z}^2 \setminus 0$ since $\hat{\mathcal{C}}^0_i \not = \emptyset$ for each $i \in Q_0$. 
Let $p,q \in \mathcal{C}^u_i$ be cycles such that the region
$$\mathcal{R}_{\tilde{p}_3\tilde{p}_2^2\tilde{p}_1,\tilde{q}_3\tilde{q}_2^2\tilde{q}_1}$$
contains the vertex $i^+ + u \in Q_0^+$.
Furthermore, suppose $p$ and $q$ admit representatives $\tilde{p}'$ and $\tilde{q}'$ (possibly distinct from $\tilde{p}$ and $\tilde{q}$) such that the region $\mathcal{R}_{\tilde{p}',\tilde{q}'}$ has minimal area among all such pairs of cycles $p,q$.
See Figure \ref{6figure}.

By Lemmas \ref{tau'A'} and \ref{here2}.1, there is some $m \in \mathbb{Z}$ such that
$$\bar{\eta}(p_3p_2^2p_1) = \bar{\eta}(q_3q_2^2q_1) \sigma^m.$$
Suppose $m \geq 0$.
Set
$$s := p_3p_2^2p_1 \ \ \text{ and } \ \ t := q_3q_2^2q_1 \sigma_i^m.$$
Then
\begin{equation*} \label{s == t}
\bar{\eta}(s) = \bar{\eta}(t).
\end{equation*}

Assume to the contrary that $A$ is cancellative.
If $s \not = t$, then $s,t$ would be a non-cancellative pair by Lemma \ref{here2}.4.
Therefore $s = t$.
Furthermore, there is a path $r^+$ in $\mathcal{R}_{\tilde{s},\tilde{t}}$ which passes through the vertex
$$i^+ + u \in Q_0^+$$
and is homotopic to $s^+$ (by the relations $I$), by Lemma \ref{r+}.2.
In particular, $r$ factors into paths $r = r_2e_ir_1 = r_2r_1$, where
$$r_1, r_2 \in \mathcal{C}^u_i.$$
But $p$ and $q$ were chosen so that the area of $\mathcal{R}_{\tilde{p}',\tilde{q}'}$ is minimal.
Thus there is some $\ell_1, \ell_2 \geq 0$ such that
$$\tilde{r}_1 = \tilde{p}'\sigma^{\ell_1}_i \ \ \text{ and } \ \ \tilde{r}_2 = \tilde{p}'\sigma^{\ell_2}_i \ \ \ \text{(modulo $I$)}.$$
Set $\ell := \ell_1 + \ell_2$.
Then
\begin{equation} \label{r p2}
r = r_2r_1 = p^2 \sigma^{\ell_1 + \ell_2}_i = p^2 \sigma^{\ell}_i.
\end{equation}

Since $A$ is cancellative, the $\bar{\eta}$-image of any nontrivial cycle in $Q^+$ is a positive power of $\sigma$ by Lemma \ref{here3'}.1 (with $\bar{\eta}$ in place of $\bar{\tau}$).
In particular, since $(p_1p_3)^+$ is a nontrivial cycle, there is an $n \geq 1$ such that
\begin{equation} \label{p1p3}
\bar{\eta}(p_1p_3) = \sigma^n.
\end{equation}

Hence
$$\bar{\eta}(p)\bar{\eta}(p_2) \stackrel{\textsc{(i)}}{=} \bar{\eta}(s) = \bar{\eta}(r) \stackrel{\textsc{(ii)}}{=} \bar{\eta}(p^2)\sigma^{\ell} \stackrel{\textsc{(iii)}}{=} \bar{\eta}(p)\bar{\eta}(p_2) \bar{\eta}(p_1p_3) \sigma^{\ell} \stackrel{\textsc{(iv)}}{=} \bar{\eta}(p)\bar{\eta}(p_2) \sigma^{n + \ell}.$$
Indeed, (\textsc{i}) and (\textsc{iii}) hold by Lemma \ref{tau'A'}, (\textsc{ii}) holds by (\ref{r p2}), and (\textsc{iv}) holds by (\ref{p1p3}).
Thus, since $k[\mathcal{P}]$ is an integral domain, we have
$$\sigma^{n + \ell} = 1.$$
But $n \geq 1$ and $\ell \geq 0$.
Whence $\sigma = 1$.
Therefore $Q$ has no perfect matchings, contrary to our standing assumption that $Q$ has at least one perfect matching.
\end{proof}

\begin{figure}
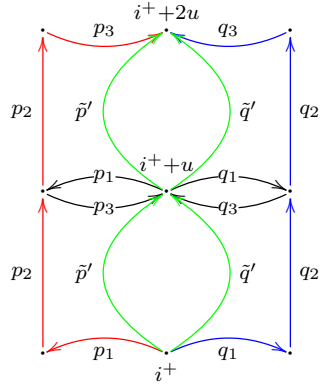

$$\xy 0;/r.3pc/:
(0,-17)*{\cdot}="1";
(13,-17)*{\cdot}="2";(0,0)*{\cdot}="3";(13,0)*{\cdot}="4";(0,17)*{\cdot}="5";
(13,17)*{\cdot}="6";
(-13,-17)*{\cdot}="7";(-13,0)*{\cdot}="8";(-13,17)*{\cdot}="9";
{\ar_{q_2}@[blue]"2";"4"};{\ar_{q_2}@[blue]"4";"6"};{\ar^{p_2}@[red]"7";"8"};{\ar^{p_2}@[red]"8";"9"};
{\ar@/^/_{q_1}@[blue]"1";"2"};{\ar@/_/^{p_1}@[red]"1";"7"};
{\ar@/^/|-{q_1}"3";"4"};{\ar@/_/|-{p_1}"3";"8"};
{\ar@/^/|-{q_3}"4";"3"};{\ar@/_/|-{p_3}"8";"3"};
{\ar@/^/_{q_3}@[blue]"6";"5"};{\ar@/_/^{p_3}@[red]"9";"5"};
(0,-17)*{}="10";(0,1)*{}="11";(0,17)*{}="12";
{\ar@{}_{i^+}"10";"10"};
{\ar@{}^{i^+ + u}"11";"11"};
{\ar@{}^{i^+ + 2u}"12";"12"};
{\ar@/_2pc/_{\tilde{q}'}@[green]"1";"3"};
{\ar@/^2pc/^{\tilde{p}'}@[green]"1";"3"};
{\ar@/_2pc/_{\tilde{q}'}@[green]"3";"5"};
{\ar@/^2pc/^{\tilde{p}'}@[green]"3";"5"};
\endxy$$
\caption{Setup for Proposition \ref{here}, drawn on the cover $Q^+$.  
The (lifts of the) cycles $s = p_3p_2^2p_1$ and $q_3q_2^2q_1$ are drawn in red and blue respectively.  The representatives $\tilde{p}'$ and $\tilde{q}'$ of $p$ and $q$, such that the region $\mathcal{R}_{\tilde{p}',\tilde{q}'}$ has minimal area, are drawn in green.}
\label{6figure}
\end{figure}

\begin{Definition} \rm{
We call the subquiver given in Figure \ref{figure3}.i a \textit{column}, and the subquiver given in Figure \ref{figure3}.ii a \textit{pillar}.
In the latter case, $\operatorname{h}(a_{\ell})$ and $\operatorname{t}(a_1)$ are either trivial subpaths of $q_{\ell}$ and $p_1$ respectively, or $p_{\ell}$ and $q_1$ respectively.
}\end{Definition}

\begin{figure}
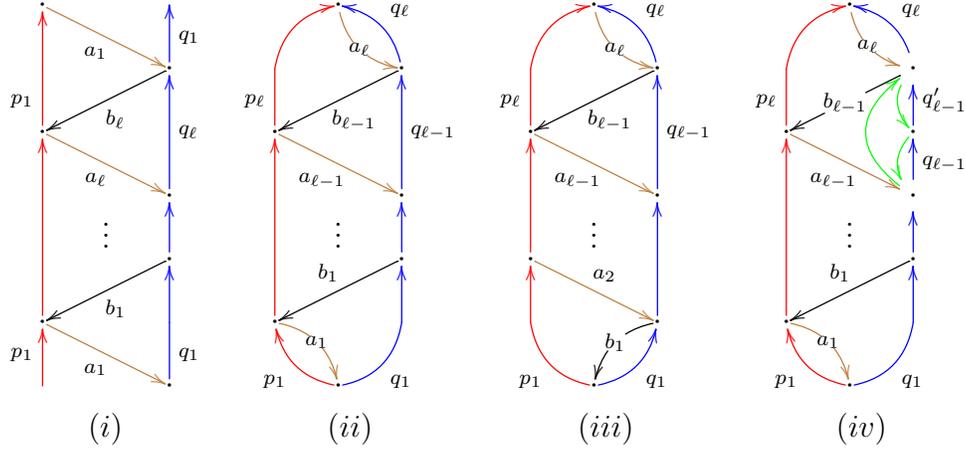

$$\begin{array}{cccc}
\xy 0;/r.4pc/:
(-5,-15)*{}="1";(5,-15)*{\cdot}="1'";
(-5,-10)*{\cdot}="2";(5,-5)*{\cdot}="3";
(0,-2.5)*{\vdots}="";
(5,0)*{\cdot}="6";(-5,5)*{\cdot}="5";
(5,10)*{\cdot}="7";
(-5,15)*{\cdot}="8";(5,15)*{}="8'";
(5,-10)*{}="9";(-5,10)*{}="10";
{\ar^{p_1}@[red]"1";"2"};{\ar_{a_1}@[brown]"2";"1'"};{\ar@{-}_{q_1}@[blue]"1'";"9"};
{\ar^{}@[blue]"9";"3"};{\ar^{b_1}"3";"2"};{\ar^{}@[red]"2";"5"};{\ar^{}@[blue]"3";"6"};
{\ar_{q_{\ell}}@[blue]"6";"7"};{\ar^{b_{\ell}}"7";"5"};{\ar@{-}^{p_1}@[red]"5";"10"};{\ar^{}@[red]"10";"8"};
{\ar_{a_1}@[brown]"8";"7"};{\ar_{q_1}@[blue]"7";"8'"};{\ar_{a_{\ell}}@[brown]"5";"6"};
\endxy
&
\xy 0;/r.4pc/:
(0,-15)*{\cdot}="1";(-5,-10)*{\cdot}="2";(5,-5)*{\cdot}="3";
(0,-2.5)*{\vdots}="";
(5,0)*{\cdot}="6";(-5,5)*{\cdot}="5";
(5,10)*{\cdot}="7";(0,15)*{\cdot}="8";(5,-10)*{}="9";(-5,10)*{}="10";
{\ar@/^/^{p_1}@[red]"1";"2"};{\ar@/^/|-{a_1}@[brown]"2";"1"};{\ar@{-}@/_/_{q_1}@[blue]"1";"9"};
{\ar^{}@[blue]"9";"3"};{\ar_{b_1}"3";"2"};{\ar^{}@[red]"2";"5"};{\ar^{}@[blue]"3";"6"};
{\ar_{q_{\ell-1}}@[blue]"6";"7"};{\ar^{b_{\ell-1}}"7";"5"};{\ar@{-}^{p_{\ell}}@[red]"5";"10"};{\ar@/^/^{}@[red]"10";"8"};
{\ar@/_/|-{a_{\ell}}@[brown]"8";"7"};{\ar@/_/_{q_{\ell}}@[blue]"7";"8"};{\ar_{a_{\ell-1}}@[brown]"5";"6"};
\endxy
&
\xy 0;/r.4pc/:
(0,-15)*{\cdot}="1";(-5,-10)*{}="2";(5,-5)*{}="3";
(0,-2.5)*{\vdots}="";
(5,0)*{\cdot}="6";(-5,5)*{\cdot}="5";
(5,10)*{\cdot}="7";(0,15)*{\cdot}="8";(5,-10)*{\cdot}="9";(-5,10)*{}="10";
(-5,-5)*{\cdot}="11";
{\ar@{-}@/^/^{p_1}@[red]"1";"2"};{\ar@/_/|-{b_1}"9";"1"};{\ar@/_/_{q_1}@[blue]"1";"9"};
{\ar^{}@[blue]"9";"6"};{\ar^{a_2}@[brown]"11";"9"};{\ar^{}@[red]"2";"11"};{\ar^{}@[red]"11";"5"};
{\ar_{q_{\ell-1}}@[blue]"6";"7"};{\ar^{b_{\ell-1}}"7";"5"};{\ar@{-}^{p_{\ell}}@[red]"5";"10"};{\ar@/^/^{}@[red]"10";"8"};
{\ar@/_/|-{a_{\ell}}@[brown]"8";"7"};{\ar@/_/_{q_{\ell}}@[blue]"7";"8"};{\ar_{a_{\ell-1}}@[brown]"5";"6"};
\endxy
&
\xy 0;/r.4pc/:
(0,-15)*{\cdot}="1";(-5,-10)*{\cdot}="2";(5,-5)*{\cdot}="3";
(0,-2.5)*{\vdots}="";
(5,0)*+{\cdot}="6";(-5,5)*{\cdot}="5";
(5,10)*+{\cdot}="7";(0,15)*{\cdot}="8";(5,-10)*{}="9";(-5,10)*{}="10";
(5,5)*{\cdot}="11";
{\ar@/^/^{p_1}@[red]"1";"2"};{\ar@/^/|-{a_1}@[brown]"2";"1"};{\ar@{-}@/_/_{q_1}@[blue]"1";"9"};
{\ar^{}@[blue]"9";"3"};{\ar_{b_1}"3";"2"};{\ar^{}@[red]"2";"5"};{\ar^{}@[blue]"3";"6"};
{\ar_{q_{\ell-1}}@[blue]"6";"11"};
{\ar_{q'_{\ell-1}}@[blue]"11";"7"};
{\ar@/_/@[green]"7";"11"};{\ar@/_/@[green]"11";"6"};{\ar@/^1.5pc/@[green]"6";"7"};
{\ar|-{b_{\ell-1}}"7";"5"};{\ar@{-}^{p_{\ell}}@[red]"5";"10"};{\ar@/^/^{}@[red]"10";"8"};
{\ar@/_/|-{a_{\ell}}@[brown]"8";"7"};{\ar@/_/_{q_{\ell}}@[blue]"7";"8"};{\ar_{a_{\ell-1}}@[brown]"5";"6"};
\endxy
\\
(i) & (ii) & (iii) & (iv)
\end{array}$$
\caption{Cases for Lemmas \ref{columns and pillars} and \ref{figure3lemma}.
The subquiver in case (i) is a `column', and the subquiver in case (ii) is a `pillar'.
The paths $a_1, \ldots, a_{\ell}, b_1, \ldots, b_{\ell}$ are arrows, and the paths $p_{\ell} \cdots p_1$ and $q_{\ell} \cdots q_1$, drawn in red and blue respectively, are not-necessarily-proper subpaths of $p$ and $q$.
Case (iv) is an example with extra interior arrows, drawn in green.
In cases (i), (ii), (iii), the cycles $q_ja_jb_j$ and $a_jp_jb_{j-1}$ are unit cycles.
Note that in cases (iii) and (iv), $q^+$ has a cyclic subpath (modulo $I$), contrary to assumption.
Furthermore, $pb_{\ell} = b_{\ell}q$ in case (i), and $p_{\ell} \cdots p_1 = q_{\ell} \cdots q_1$ in case (ii).}
\label{figure3}
\end{figure}

\begin{figure}
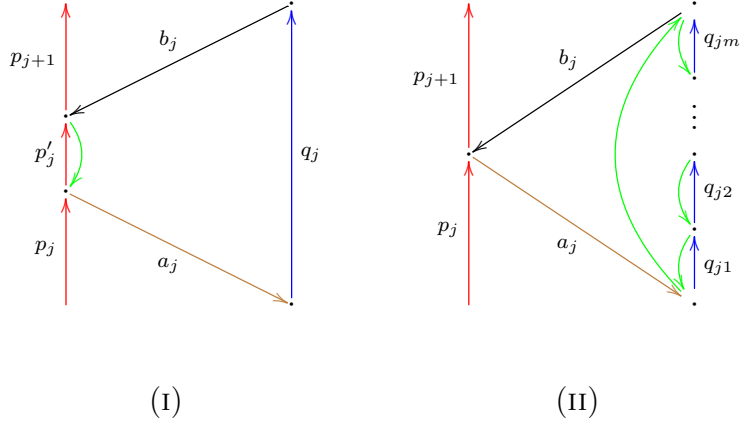

$$\begin{array}{ccc}
\xy
(-15,-20)*{}="1";(15,-20)*{\cdot}="2";(-15,20)*{}="3";(15,20)*{\cdot}="4";(-15,-5)*{\cdot}="5";(-15,5)*{\cdot}="6";
{\ar_{q_j}@[blue]"2";"4"};{\ar_{b_j}"4";"6"};{\ar@/^/@[green]"6";"5"};{\ar_{a_j}@[brown]"5";"2"};{\ar^{p_j}@[red]"1";"5"};{\ar^{p'_j}@[red]"5";"6"};{\ar^{p_{j+1}}@[red]"6";"3"};
\endxy
& \ \ \ &
\xy
(-15,-20)*{}="1'";
(-15,0)*{\cdot}="6";(-15,20)*{}="3'";
{\ar^{p_j}@[red]"1'";"6"};{\ar^{p_{j+1}}@[red]"6";"3'"};
(15,-20)*+{\cdot}="1";(15,-10)*{\cdot}="2";(15,0)*{\cdot}="3";(15,10)*{\cdot}="4";(15,20)*+{\cdot}="5";
(15,6)*{\vdots}="";
{\ar_{b_j}"5";"6"};{\ar_{a_j}@[brown]"6";"1"};{\ar@/^2.5pc/@[green]"1";"5"};
{\ar@/_/@[green]"5";"4"};{\ar@/_/@[green]"3";"2"};{\ar@/_/@[green]"2";"1"};
{\ar_{q_{j1}}@[blue]"1";"2"};{\ar_{q_{j2}}@[blue]"2";"3"};{\ar_{q_{jm}}@[blue]"4";"5"};
\endxy
\\ \\
(\textsc{i}) & & (\textsc{ii})
\end{array}$$
\caption{Generalizations of the setup given in Figure \ref{figure3}.iv.
In both cases, the green paths are arrows.
Case (\textsc{i}) shows that if $\operatorname{t}(a_j) \not = \operatorname{h}(b_j)$ (resp.\ $\operatorname{h}(a_j) \not = \operatorname{t}(b_{j-1})$), then $p^+$ (resp.\ $q^+$) has a cyclic subpath.
In case (\textsc{ii}), $q_j = q_{jm}\cdots q_{j2}q_{j1}$.
This case shows that if $q_ja_jb_j$ (resp.\ $a_jp_jb_{j-1}$) is not a unit cycle, then $q^+$ (resp.\ $p^+$) has a cyclic subpath.}
\label{factor2}
\end{figure}

\begin{Lemma} \label{columns and pillars}
Suppose paths $p^+$, $q^+$ have no cyclic subpaths (modulo $I$), and bound a region $\mathcal{R}_{p,q}$ that contains no vertices in its interior.
\begin{enumerate}
 \item If $p$ and $q$ do not intersect, then $p^+$ and $q^+$ bound a column.
 \item Otherwise $p^+$ and $q^+$ bound a union of pillars.
In particular, if
$$\operatorname{t}(p^+) = \operatorname{t}(q^+) \ \ \ \text{ and } \ \ \ \operatorname{h}(p^+) = \operatorname{h}(q^+) \not = \operatorname{t}(p^+),$$
then $p = q$ (modulo $I$).
\end{enumerate}
\end{Lemma}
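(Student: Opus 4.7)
My plan is to analyze the structure of $Q^+$ inside $\mathcal{R}_{p,q}$ face by face. The two hypotheses provide complementary constraints: since $\mathcal{R}_{p,q}$ contains no interior vertices, every arrow of $Q^+$ lying in $\mathcal{R}_{p,q}$ must have both endpoints on $p^+ \cup q^+$; and since $p^+$ and $q^+$ have no cyclic subpaths, neither boundary can fold back on itself. Combined with the dimer property that each arrow bounds exactly two unit cycles, these constraints should force the claimed classification.

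For Case (1), I would walk along $p^+$ from tail to head and, at each arrow $p_j$ of $p^+$, examine the unique unit cycle of $Q^+$ on the $\mathcal{R}_{p,q}$-side of $p_j$. Since this face has no interior vertices, it must be bounded by $p_j$ itself, a (possibly trivial) subpath of $q^+$, and cross arrows between $p^+$ and $q^+$. The alternatives depicted in Figures \ref{figure3}.iii and \ref{figure3}.iv (and their refinements in Figure \ref{factor2}) show that any configuration other than the alternating pattern of Figure \ref{figure3}.i --- one outgoing cross arrow $b_{j-1}$ and one incoming cross arrow $b_j$ per face, with the cross arrows shared between consecutive faces --- would force $p^+$ or $q^+$ to contain a cyclic subpath, contradicting the hypothesis. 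Iterating this local argument along $p^+$ produces the column of Figure \ref{figure3}.i when $p$ and $q$ do not intersect. For Case (2), the intersection points of $p^+$ and $q^+$ subdivide $\mathcal{R}_{p,q}$ into sub-regions, each bounded by non-intersecting sub-paths of $p^+$ and $q^+$ sharing common endpoints; applying the same local analysis to each sub-region produces a pillar.

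For the ``in particular'' claim, the hypotheses place $p^+$ and $q^+$ in the pillar configuration, so they factor as $p = p_\ell \cdots p_1$ and $q = q_\ell \cdots q_1$ with cross arrows $a_j, b_j$ such that both $a_j p_j b_{j-1}$ and $q_j a_j b_j$ are unit cycles. The generating relations of $I$ given in (\ref{I def}), applied at each arrow $a_j$, identify $p_j b_{j-1}$ with $b_j q_j$ in $A$; telescoping these equalities along the pillar yields $p_\ell \cdots p_1 = q_\ell \cdots q_1$ modulo $I$, that is, $p = q$.

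The main obstacle is the local classification in the second paragraph: one must verify that \emph{every} non-column, non-pillar arrangement falls into one of the forbidden configurations of Figures \ref{figure3}.iii--iv or Figure \ref{factor2}. This is a combinatorial enumeration of how a unit cycle can meet the boundary of $\mathcal{R}_{p,q}$, complicated by the fact that cross arrows could a priori connect non-adjacent vertices of $p^+$ and $q^+$ or emanate in clusters from a single boundary vertex; each such possibility has to be ruled out by producing an explicit cyclic subpath in $p^+$ or $q^+$.
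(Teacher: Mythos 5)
Your proposal is correct and follows essentially the same route as the paper: both arguments use the absence of interior vertices to reduce to the local configurations of Figure \ref{figure3}, rule out cases (iii) and (iv) (and their generalizations in Figure \ref{factor2}) because they force cyclic subpaths of $p^+$ or $q^+$, and obtain $p=q$ in the pillar case by telescoping the unit-cycle relations $p_jb_{j-1}=b_jq_j$ coming from the shared arrows $a_j$. Your explicit telescoping of the relations is a slight elaboration of what the paper only asserts in the caption of Figure \ref{figure3}, and the residual gap you flag (completeness of the local enumeration) is present to the same degree in the paper's own figure-based argument.
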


\begin{proof}
Since $\mathcal{R}_{p,q}$ contains no vertices in its interior, each path that intersects its interior is an arrow.
Thus $p^+$ and $q^+$ bound a union of subquivers given by the four cases in Figure \ref{figure3}.

In case (i),
$$p = p_{\ell} \cdots p_1 \ \ \ \text{ and } \ \ \ q = q_{\ell} \cdots q_1.$$
In cases (ii) - (iv), the paths $p_{\ell} \cdots p_1$ and $q_{\ell} \cdots q_1$ are not-necessarily-proper subpaths of $p$ and $q$ respectively.
In cases (ii) and (iv), $\operatorname{h}(a_{\ell})$ and $\operatorname{t}(a_1)$ are either trivial subpaths of $q$ and $p$ respectively, or $p$ and $q$ respectively.
In case (iii), $\operatorname{h}(a_{\ell})$ and $\operatorname{t}(a_1)$ are either both trivial subpaths of $q$, or both trivial subpaths of $p$.
In all cases, each cycle which bounds a region with no arrows in its interior is a unit cycle.
In particular, in cases (i) - (iii), each path $a_jp_jb_{j-1}$, $b_jq_ja_j$ is a unit cycle.

Observe that in cases (iii) and (iv), $q^+$ has a cyclic subpath, contrary to assumption.
Generalizations of case (iv) are considered in Figures \ref{factor2}.\textsc{i} and \ref{factor2}.\textsc{ii}.
Consequently, $p^+$ and $q^+$ bound either a column or a union of pillars.
\end{proof}

\begin{Notation} \label{non-crossing} \rm{
For $u \in \mathbb{Z}^2$, denote by $\widehat{\mathcal{C}}^u$ a maximal set of representatives of cycles in $\hat{\mathcal{C}}^u$ whose lifts do not intersect transversely in $\mathbb{R}^2$ (though they may share common subpaths).
}\end{Notation}

In the following two lemmas, fix $u \in \mathbb{Z}^2$ and a subset $\widehat{\mathcal{C}}^u$.

\begin{Lemma} \label{figure2lemma}
Suppose $\hat{\mathcal{C}}^u_i \not = \emptyset$ for each $i \in Q_0$.
Then $\widehat{\mathcal{C}}^u_i \not = \emptyset$ for each $i \in Q_0$.
\end{Lemma}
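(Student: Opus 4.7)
The plan is to argue by contradiction. Suppose $\widehat{\mathcal{C}}^u_i = \emptyset$ for some $i \in Q_0$. The goal is then to produce a cycle $p' \in \hat{\mathcal{C}}^u_i$ whose lift has no transverse intersection with any lift of any $q \in \widehat{\mathcal{C}}^u$; since $\operatorname{t}(p') = i$ while $\operatorname{t}(q) \neq i$ for every $q \in \widehat{\mathcal{C}}^u$, such a $p'$ is not equivalent modulo $I$ to any element of $\widehat{\mathcal{C}}^u$, so adjoining it strictly enlarges $\widehat{\mathcal{C}}^u$ while preserving the non-transverse-crossing property, contradicting maximality.

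To produce $p'$, I would first set up the $\mathbb{Z}^2$-equivariant locally finite family $\mathcal{F} \subset \mathbb{R}^2$ of all $\mathbb{Z}^2$-translates of lifts of members of $\widehat{\mathcal{C}}^u$. Since each such member travels in direction $u$ and the family has no pairwise transverse crossings, concatenating translates gives a collection of bi-infinite direction-$u$ paths that partition the complement of $\mathcal{F}$ into $u$-invariant strips. Choose $p \in \hat{\mathcal{C}}^u_i$ (available by hypothesis) with a lift $p^+$ minimizing the finite number $N$ of transverse intersections of $p^+$ with $\mathcal{F}$. If $N = 0$, take $p' := p$. Otherwise fix a lift $q^+$ of some $q \in \widehat{\mathcal{C}}^u$ that crosses $p^+$ transversely, and form the bi-infinite concatenation $\tilde{q} := \bigcup_{n \in \mathbb{Z}}(q^+ + nu)$. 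Invariance of $\tilde{q}$ under translation by $u$ forces $i^+$ and $i^+ + u$ to lie on the same side of $\tilde{q}$, so transverse crossings of $p^+$ with $\tilde{q}$ occur in pairs. Taking two consecutive such crossings $v^+_1, v^+_2$ along $p^+$, factor $p^+ = p^+_3 p^+_2 p^+_1$ with $p^+_2$ the segment from $v^+_1$ to $v^+_2$, and replace $p^+_2$ by the directed segment $r^+$ of $\tilde{q}$ joining $v^+_1$ to $v^+_2$. The result ${p'}^+ := p^+_3 r^+ p^+_1$ is a lift of a cycle in $\mathcal{C}^u_i$ with at least two fewer transverse crossings with $\tilde{q}$, and no new crossings with any other member of $\mathcal{F}$ (since $r^+ \subset \tilde{q}$ and $\tilde{q}$ meets no other element transversely).

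The main obstacle is verifying that $p'$ lies in $\hat{\mathcal{C}}^u$, i.e.\ that some representative has $(\cdot^2)^+$ with no cyclic subpath. I would choose $v^+_1, v^+_2$ so that the disc bounded by $p^+_2$ and $r^+$ contains no vertex of $Q^+$ in its interior, subdividing by Lemma \ref{r+}.2 if necessary; Lemma \ref{columns and pillars} then identifies this disc as a column or a union of pillars, and a direct case analysis of those two pictures shows that the swap introduces no cyclic subpath into $({p'}^2)^+$. A subsidiary technical point---that $r^+$ be oriented consistently with $p^+$---holds because $p^+$ and $\tilde{q}$ both travel in direction $u$ and the chosen crossings are consecutive. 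Should $({p'}^2)^+$ nonetheless acquire a cyclic subpath, it can be excised to yield an even shorter cycle in $\hat{\mathcal{C}}^u_i$ with smaller crossing count. Either way $N(p') < N$, contradicting minimality and completing the proof.
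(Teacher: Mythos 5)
Your overall strategy is the same as the paper's: remove a transverse crossing by swapping the segment of $p^+$ between two consecutive intersection points with the corresponding segment of the other cycle, arranging the region between the two segments to be vertex-free and then invoking Lemma \ref{columns and pillars}. The genuine gap sits exactly at the step you flag as the main obstacle. You attempt to verify directly that the swap ``introduces no cyclic subpath into $({p'}^2)^+$'' via an unperformed case analysis of columns and pillars, but that claim concerns the whole new representative, not just the local region $\mathcal{R}_{p_2,r}$, and it can fail: $p_1^+$ or $p_3^+$ may meet the substituted segment $r^+$ tangentially (sharing a vertex without crossing transversely), which produces a repeated vertex and hence a cyclic subpath in $\tilde{p}'^+$ that no analysis of the disc between $v_1^+$ and $v_2^+$ can rule out. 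Your fallback of excising such a subpath replaces $p'$ by a genuinely different element of $A$ (the excised loop is a cycle in $\mathcal{C}^0$, so by Lemma \ref{here3'}.1 its removal divides the $\bar{\tau}$-image by a power of $\sigma$), and it is then not established that the excised cycle still lies in $\hat{\mathcal{C}}^u_i$; so the induction does not close.

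The missing ingredient is the second conclusion of Lemma \ref{columns and pillars}, which is what the paper uses: since $p_2^+$ and $r^+$ have common distinct endpoints, have no cyclic subpaths, and bound a vertex-free region, they bound a union of pillars and hence $p_2 = r$ modulo $I$. Therefore $p' = p$ in $A$, i.e.\ $\tilde{p}'$ is simply another \emph{representative} of the same cycle $p \in \hat{\mathcal{C}}^u_i$. Since membership in $\widehat{\mathcal{C}}^u$ only requires being a representative of a cycle in $\hat{\mathcal{C}}^u$ whose lift avoids transverse crossings, no verification about cyclic subpaths of $\tilde{p}'$ is needed at all, and your minimal-crossing-count argument then closes exactly as the paper's iterative uncrossing does. (A smaller point in the same vein: your orientation claim for $r^+$ is asserted rather than proved; it too follows from choosing the two intersection points so that both paths traverse them in the same order, which is what the paper's factorization $\tilde{p} = p_3p_2p_1$, $\tilde{q} = q_3q_2q_1$ with $\operatorname{t}(p_2^+) = \operatorname{t}(q_2^+)$ and $\operatorname{h}(p_2^+) = \operatorname{h}(q_2^+)$ encodes.)
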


\begin{proof}
If $u = 0$, then $\widehat{\mathcal{C}}^u_i = \hat{\mathcal{C}}^u_i = \left\{ e_i \right\}$.
So suppose $u \in \mathbb{Z}^2 \setminus 0$.

Let $\tilde{p},\tilde{q}$ be representatives of cycles $p,q$ in $\hat{\mathcal{C}}^u$ that intersect transversely at $k \in Q_0$.
Then their lifts $\tilde{p}^+$ and $\tilde{q}^+$ intersect at least twice since $p$ and $q$ are both in $\mathcal{C}^u$.
Thus $\tilde{p}$ and $\tilde{q}$ factor into paths $\tilde{p} = p_3p_2p_1$ and $\tilde{q} = q_3q_2q_1$, where
$$\operatorname{t}(p_2^+) = \operatorname{t}(q_2^+) \ \ \text{ and } \ \ \operatorname{h}(p_2^+) = \operatorname{h}(q_2^+),$$
and $k^+$ is the tail or head of $p_2^+$.
See Figure \ref{figure2}.

Since $\hat{\mathcal{C}}^u_i \not = \emptyset$ for each $i \in Q_0$, we may suppose that $\mathcal{R}_{p_2,q_2}$ contains no vertices in its interior.
Since $p$ and $q$ are in $\hat{\mathcal{C}}$, $p_2^+$ and $q_2^+$ do not have cyclic subpaths.
Thus $p_2 = q_2$ (modulo $I$) by Lemma \ref{columns and pillars}.2.
Therefore the paths
$$\tilde{s} := p_3q_2p_1 \ \ \ \text{ and } \ \ \ \tilde{t} := q_3p_2q_1$$
equal $\tilde{p}$ and $\tilde{q}$ (modulo $I$) respectively.
In particular, $s$ and $t$ are in $\hat{\mathcal{C}}^u$.
The lemma then follows since $\tilde{s}^+$ and $\tilde{t}^+$ do not intersect transversely at the vertices $\operatorname{t}(p_2^+)$ or $\operatorname{h}(p_2^+)$.
\end{proof}

\begin{figure}
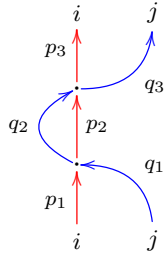

$$\xy 0;/r.4pc/:
(-3,-9)*+{\text{\scriptsize{$i$}}}="1";(-3,-3)*{\cdot}="2";(-3,3)*{\cdot}="3";(-3,9)*+{\text{\scriptsize{$i$}}}="4";
(3,-9)*+{\text{\scriptsize{$j$}}}="5";(3,9)*+{\text{\scriptsize{$j$}}}="6";
{\ar^{p_1}@[red]"1";"2"};{\ar_{p_2}@[red]"2";"3"};{\ar^{p_3}@[red]"3";"4"};
{\ar@/_.9pc/_{q_1}@[blue]"5";"2"};{\ar@/^1.2pc/^{q_2}@[blue]"2";"3"};{\ar@/_.9pc/_{q_3}@[blue]"3";"6"};
\endxy$$
\caption{Setup for Lemma \ref{figure2lemma}.}
\label{figure2}
\end{figure}

\begin{Lemma} \label{figure3lemma}
Let $u \in \mathbb{Z}^2 \setminus 0$, and suppose $\hat{\mathcal{C}}^u_i \not = \emptyset$ for each $i \in Q_0$.
Then there is a simple matching $x \in \mathcal{S}$ such that $Q \setminus x$ supports each cycle in $\widehat{\mathcal{C}}^u$.

Furthermore, if $A$ contains a column, then there are two simple matchings $x_1, x_2 \in \mathcal{S}$ such that $Q \setminus (x_1 \cup x_2)$ supports each cycle in $\widehat{\mathcal{C}}^u$.
\end{Lemma}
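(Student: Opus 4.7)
The plan is to construct a perfect matching $D$ consisting of ``crossing'' arrows between consecutive cycles in $\widehat{\mathcal{C}}^u$, show that $D$ avoids all arrows on those cycles and is simple, and then toggle the crossing choice inside a column to produce the second matching.

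By Lemma \ref{figure2lemma}, $\widehat{\mathcal{C}}^u_i \ne \emptyset$ for every $i \in Q_0$, so every vertex of $Q$ lies on some cycle in $\widehat{\mathcal{C}}^u$. Lifting to $\mathbb{R}^2$, the cycles in $\widehat{\mathcal{C}}^u$ project to pairwise non-crossing closed curves on $T^2$ winding in direction $u$, which partition $T^2$ into annular regions each bounded by adjacent cycles $p, q \in \widehat{\mathcal{C}}^u$. Because every vertex lies on such a cycle, no vertex of $Q$ lies in the interior of any region. Break $p$ and $q$ into segments between their successive shared vertices, and apply Lemma \ref{columns and pillars}: each segment bounds either a column (when the segments are disjoint) or a union of pillars (when they touch internally). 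In each column and each pillar of this decomposition, select the ``left-to-right'' crossings $\{a_j\}$, and let $D$ be the union of all these selections across $T^2$.

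To verify $D$ is a perfect matching, observe that every unit cycle of $Q$ lies inside some column or pillar. In a column, both unit cycles $q_j a_j b_j$ and $a_j p_j b_{j-1}$ contain $a_j \in D$, while their remaining arrows are boundary arrows $p_j, q_j$ or reverse crossings $b_j, b_{j-1}$, none of which lie in $D$; pillars are handled identically. Since $D$ contains no boundary arrow of any cycle in $\widehat{\mathcal{C}}^u$, the complement $Q \setminus D$ supports each such cycle. For simplicity: every vertex lies on a cycle in $\widehat{\mathcal{C}}^u \subseteq Q \setminus D$, so any two vertices on a common cycle are joined by an oriented subpath of that cycle; the reverse crossings $b_j \in Q \setminus D$ consistently move from each cycle to its neighbour, so the cyclic arrangement of cycles on $T^2$ transverse to $u$ yields oriented paths between any two vertices.

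For the second statement, suppose some annular region in the decomposition contains a column. Set $D_1 := D$ and define $D_2$ by replacing each $a_j$ in this column with the corresponding $b_j$, keeping every other choice unchanged. The same verification shows $D_2$ is a perfect matching, and simplicity for $D_2$ follows by the same cyclic-navigation argument with the local direction of transverse travel reversed. Since neither $D_1$ nor $D_2$ contains any boundary arrow of a cycle in $\widehat{\mathcal{C}}^u$, neither does $D_1 \cup D_2$, so $Q \setminus (D_1 \cup D_2)$ supports every cycle in $\widehat{\mathcal{C}}^u$. The main obstacle throughout is the simplicity claim: one must track the orientations of the $b_j$'s consistently across columns and pillars and confirm that, although cycle traversal is one-directional, the cyclic arrangement of non-crossing cycles on $T^2$ still provides oriented navigation between all pairs of vertices, even after locally toggling the choice of crossing in a single column.
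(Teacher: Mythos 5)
Your construction of $D$ for the first statement is essentially the paper's own argument: use Lemma \ref{figure2lemma} to cover every vertex by a non-crossing cycle, decompose the complement of the cycles into columns and pillars via Lemma \ref{columns and pillars}, take all the crossing arrows $a_j$, check that every face lies in exactly one region and contains exactly one chosen arrow, and navigate between cycles using the complementary crossings $b_j$ together with the shared vertices of pillar-bounded cycles. That part is correct.

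The gap is in your second matching. You define $D_2$ by toggling $a_j \mapsto b_j$ in a \emph{single} column, "keeping every other choice unchanged." The result is still a perfect matching (the faces are partitioned among the regions, and each local choice covers each face of its region exactly once), but it is not simple in general. The only arrows of $Q \setminus D_2$ that cross a column region point in one transverse direction: across the toggled column they are the $a_j$ (say left-to-right), while across every other column they are the $b_j$ (right-to-left). If $Q$ has two or more column regions, the cyclic-navigation argument fails: the cycle forming the right boundary of the toggled column can be \emph{entered} from both neighbouring columns but cannot be \emph{left} through either of them, so it is a sink in the reachability graph on cycles (unless an adjacent region happens to be a union of pillars, whose two bounding cycles share vertices). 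Already for two parallel cycles $C_1, C_2$ separated by two columns -- as in any square dimer with two rows -- both surviving crossings point from $C_1$ to $C_2$, and no oriented path in $Q \setminus D_2$ returns from $C_2$ to $C_1$. The paper's proof avoids this by reversing the choice in \emph{every} column simultaneously: its $D_2$ consists of all $a_j$ in pillars together with all $b_j$ in all columns, so that the complement again supports consistent transverse travel around the torus, now in the opposite direction. Your closing remark that simplicity survives "after locally toggling the choice of crossing in a single column" is precisely the step that does not hold; the toggle must be global.
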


\begin{proof}
By Lemma \ref{figure2lemma}, $\widehat{\mathcal{C}}^u_i \not = \emptyset$ for each $i \in Q_0$ since $\hat{\mathcal{C}}^u_i \not = \emptyset$ for each $i \in Q_0$.
Thus we may consider cycles $p,q \in \widehat{\mathcal{C}}^u$ for which $\pi^{-1}(p)$ and $\pi^{-1}(q)$ bound a region $\mathcal{R}_{p,q}$ with no vertices in its interior.

Recall Figure \ref{figure3}.
Since $\widehat{\mathcal{C}}^u_i \not = \emptyset$ for each $i \in Q_0$, we may partition $Q$ into columns and pillars, by Lemma \ref{columns and pillars}.
Consider the subset of arrows $x_1$ (resp.\ $x_2$) consisting of all the $a_j$ arrows in each pillar of $Q$, and all the $a_j$ arrows (resp.\ $b_j$ arrows) in each column of $Q$.
Note that $x_1$ consists of all the right-pointing arrows in each column, and $x_2$ consists of all the left-pointing arrows in each column.
Furthermore, if $Q$ does not contain a column, then $x_1 = x_2$.

Observe that each unit cycle in each column and pillar contains precisely one arrow in $x_1$, and one arrow in $x_2$.
(Note that this is not true for cases (iii) and (iv).)
Furthermore, no such arrow occurs on the boundary of these regions, that is, as a subpath of $p$ or $q$.
Therefore $x_1$ and $x_2$ are perfect matchings of $Q$.

Recall that a simple $A$-module $V$ of dimension $1^{Q_0}$ is characterized by the property that there is a cycle which passes through each vertex of $Q$ that does not annihilate $V$.
Clearly $Q \setminus x_1$ and $Q \setminus x_2$ each contain a cycle that passes through each vertex of $Q$.
Therefore $x_1$ and $x_2$ are simple matchings.
\end{proof}

\begin{Lemma} \label{at least one}
If $A$ is cancellative, then $Q$ has at least one simple matching.
\end{Lemma}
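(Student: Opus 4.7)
The plan is to derive this lemma as a direct consequence of the two preceding results in the section, Proposition \ref{here} and Lemma \ref{figure3lemma}. Since the standing hypothesis $\mathcal{P} \neq \emptyset$ of the section remains in force, both results are available without further qualification.

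First I would take the contrapositive of Proposition \ref{here}. That proposition asserts that the existence of \emph{some} $u \in \mathbb{Z}^2 \setminus 0$ and \emph{some} $i \in Q_0$ with $\hat{\mathcal{C}}^u_i = \emptyset$ forces $A$ to be non-cancellative. Reading this in reverse, the cancellativity of $A$ yields the universal conclusion that $\hat{\mathcal{C}}^u_i \neq \emptyset$ for \emph{every} $u \in \mathbb{Z}^2 \setminus 0$ and \emph{every} $i \in Q_0$.

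Next I would fix any nonzero homology class $u \in \mathbb{Z}^2$. The previous step supplies precisely the hypothesis of Lemma \ref{figure3lemma}, namely that $\hat{\mathcal{C}}^u_i \neq \emptyset$ for each $i \in Q_0$. Applying that lemma produces a simple matching $D \in \mathcal{S}$ (in fact one whose complement $Q \setminus D$ supports every cycle in $\widehat{\mathcal{C}}^u$), so $\mathcal{S} \neq \emptyset$, which is the claim.

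The substantive work has been done in Proposition \ref{here} and Lemma \ref{figure3lemma}; from their vantage point this lemma is essentially a corollary. The only mildly delicate step is the interchange of quantifiers: Proposition \ref{here} is phrased with an existential "for some $u$, $i$", and one must take its contrapositive to get the universal "for all $u$, $i$" statement needed to invoke Lemma \ref{figure3lemma} at a single fixed $u$. Once this logical move is made, no further computation is required.
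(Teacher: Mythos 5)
Your proof is correct and is exactly the argument the paper intends: the paper's own proof is the one-line "Follows from Proposition \ref{here} and Lemma \ref{figure3lemma}," and your write-up simply makes explicit the contrapositive of Proposition \ref{here} and the appeal to the standing assumption $\mathcal{P} \neq \emptyset$. No further comment is needed.
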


\begin{proof}
Follows from Proposition \ref{here} and Lemma \ref{figure3lemma}.
\end{proof}

Lemma \ref{at least one} will be superseded by Theorem \ref{AKZ} below.

\begin{Lemma} \label{here3}
Let $u \in \mathbb{Z}^2 \setminus 0$, and suppose $\hat{\mathcal{C}}^u_i \not = \emptyset$ for each $i \in Q_0$.
Then $A$ does not have a non-cancellative pair where one of the paths is a vertex.
\end{Lemma}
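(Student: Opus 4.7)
The proof is by contradiction. Suppose $A$ has a non-cancellative pair in which one path is a vertex; so there exist $j \in Q_0$, a nontrivial cycle $p$ at $j$ with $p \neq e_j$, and a nonzero path $r$ with $rp = r$ (the case $pr = r$ is symmetric). Lifting $rp = r$ and comparing endpoints shows that $\operatorname{t}(p^+) = \operatorname{h}(p^+) = j^+$, so $p \in \mathcal{C}^0_j$. By the hypothesis and Lemma \ref{figure3lemma}, $\mathcal{S}$ is nonempty, and there is a simple matching $D \in \mathcal{S}$ such that every cycle in $\widehat{\mathcal{C}}^u$ is supported on $Q \setminus D$.

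Next, Lemma \ref{here2}.3 applied to the non-cancellative pair $\{p, e_j\}$ gives $\bar\tau(p) = \bar\tau(e_j) = 1$, so $p$ uses no arrow from any simple matching. Lemma \ref{here2}.1 then provides $m, n \geq 0$ with $p\sigma_j^m = \sigma_j^n$; applying $\bar\tau$ forces $\sigma^m = \sigma^n$, and since $\sigma$ has positive degree this gives $m = n$, whence $p\sigma_j^m = \sigma_j^m$ in $A$. Observe also that $p^+$ must enclose at least one face of $Q^+$: otherwise $p$ would equal $\sigma_j^k$ for some $k \geq 1$, but then $\bar\tau(p) = \sigma^k \neq 1$, contradicting $\bar\tau(p) = 1$.

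To finish, I would pick $q \in \widehat{\mathcal{C}}^u_j$, nonempty by the hypothesis and Lemma \ref{figure2lemma}, chosen so that $q$ is supported on $Q \setminus D$. Left-multiplying $p\sigma_j^m = \sigma_j^m$ by $q$ yields $qp\sigma_j^m = q\sigma_j^m \neq 0$ in $A$. The two lifts differ precisely by the nontrivial loop $p^+$ attached at $j^+$, and the region $\mathcal{R}_{qp\sigma_j^m,\,q\sigma_j^m}$ contains the interior of $p^+$, which has at least one vertex of $Q^+$ in its interior. Lemma \ref{r+}.2 then produces a representative $\tilde r$ of $q\sigma_j^m$ passing through such an interior vertex, and I would use $\tilde r$ to construct a representative of $q$ itself that either crosses a $D$-arrow (contradicting the support of $q$ on $Q \setminus D$) or acquires a cyclic subpath from the loop $p^+$ (contradicting $q \in \hat{\mathcal{C}}^u$).

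The hardest step is this final structural one: converting the geometric fact that $q\sigma_j^m$ has representatives detouring through the interior of $p^+$ into a concrete violation of the hypothesis $\hat{\mathcal{C}}^u_j \neq \emptyset$. I expect this will rest on the columns-and-pillars decomposition from Lemma \ref{figure3lemma}: because $p$ avoids every simple matching arrow and is therefore forbidden from crossing between columns or pillars, $p^+$ is confined to the interior of a small union of columns and pillars, within which the $I$-relations can be used to propagate the forbidden loop or $D$-arrow onto a representative of $q$, yielding the desired contradiction.
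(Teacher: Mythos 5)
Your first half is sound and is essentially the paper's opening move: from the hypothesis and Lemma \ref{figure3lemma} you get $\mathcal{S} \neq \emptyset$, and from Lemma \ref{here2}.3 applied to the pair $p, e_j$ you get $\overbar{p} = 1$. The divergence is in how the argument is closed. The paper finishes in two lines: writing $p\sigma_j^m = \sigma_j^n$ via Lemma \ref{here2}.1, it obtains $\overbar{p} = \sigma^{n-m}$, asserts $n - m > 0$, and concludes from $\overbar{p} = 1$ that $\sigma = 1$, i.e.\ $\mathcal{S} = \emptyset$, contradicting Lemma \ref{figure3lemma}. Your deduction that $m = n$ (valid, given $\overbar{p}=1$ and $\sigma \neq 1$) is in tension with that asserted inequality and forecloses the algebraic shortcut, which is why you are driven into a geometric argument instead. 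That is a legitimate fork in the road, but it means you have taken on a substantially harder burden than the paper does.

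The genuine gap is that this geometric burden is never discharged. Everything after ``To finish, I would pick $q \in \widehat{\mathcal{C}}^u_j$'' is a plan, not a proof, and two specific steps fail or are missing. First, the interior of the region bounded by $p^+$ need not contain a vertex of $Q^+$: since $\overbar{p}=1$, $p$ is not a unit cycle, but $p^+$ could, for instance, bound two unit cycles glued along an edge both of whose endpoints lie on $p^+$; so the appeal to Lemma \ref{r+}.2 at an ``interior vertex'' does not get off the ground as stated. Second, and decisively, the step converting a detouring representative of $q\sigma_j^m$ into a representative of $q$ that either meets $D$ or acquires a cyclic subpath is exactly the content that must be proved, and you explicitly defer it. The columns-and-pillars decomposition of Lemma \ref{columns and pillars}/\ref{figure3lemma} is set up relative to a fixed pair of non-intersecting lifts bounding a vertex-free region; it gives no control over an arbitrary closed loop $p^+$, which may wind through many columns and pillars. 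So the proposal is incomplete precisely where the work lies, and the completed portion does not recover the paper's short algebraic finish.
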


\begin{proof}
Suppose $e_i, p$ is a non-cancellative pair.
Then there is some $m,n \geq 0$ such that
$$p\sigma_i^m = e_i \sigma_i^n = \sigma_i^n,$$
by Lemma \ref{here2}.1.
Whence $\overbar{p} = \sigma^{n-m}$ since $B$ is an integral domain.
But $n - m > 0$ since $p \not = e_i$.
Furthermore,
$$\sigma^{n-m} = \overbar{p} = \overbar{e}_i = 1,$$
by Lemma \ref{here2}.3.
Thus $\sigma = 1$.
Therefore $Q$ has no simple matchings, $\mathcal{S} = \emptyset$.
Consequently, for each $u \in \mathbb{Z}^2 \setminus 0$ there is a vertex $i \in Q_0$ such that $\hat{\mathcal{C}}^u_i = \emptyset$, by Lemma \ref{figure3lemma}.
\end{proof}

\begin{Lemma} \label{longlist}
Let $u \in \mathbb{Z}^2$.
If $p,q \in \mathcal{C}^u$, then there is some $n \in \mathbb{Z}$ such that $\overbar{p} = \overbar{q}\sigma^n$.
In particular, if $\sigma \nmid \overbar{p}$ and $\sigma \nmid \overbar{q}$, then $\overbar{p} = \overbar{q}$.
\end{Lemma}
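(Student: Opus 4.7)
The plan is to transport the comparison between $\overbar{p}$ and $\overbar{q}$ — which a priori have different basepoints — to a comparison of two paths with coincident tail and head, where Lemma \ref{here2}.2 applies directly. The device is to sandwich a connecting path between $p$ and $q$ on opposite sides.

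More precisely, let $i := \operatorname{t}(p)$ and $j := \operatorname{t}(q)$, so $p \in e_iAe_i$ and $q \in e_jAe_j$. First I would choose any nonzero path $r \in e_jAe_i$; such a path exists because a dimer quiver on a torus is strongly connected (the unit cycles tile $T^2$, so every vertex is reachable from every other). Fix a lift so that $r^+$ runs from $i^+$ to $j^+ + v$ for some $v \in \mathbb{Z}^2$. Next I would form the two paths $rp,\, qr \in e_jAe_i$ and check, by stacking lifts, that both $(rp)^+$ and $(qr)^+$ have tail $i^+$ and head $j^+ + v + u$, using precisely the hypothesis $p, q \in \mathcal{C}^u$.

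With this in hand, Lemma \ref{here2}.2 supplies some $n \in \mathbb{Z}$ with
$$\bar{\tau}(rp) = \bar{\tau}(qr)\, \sigma^n.$$
Since $\tau$ is an algebra homomorphism (Lemma \ref{tau'A'}) and $B$ is commutative, this rearranges to $\bar{\tau}(r)\bar{\tau}(p) = \bar{\tau}(q)\bar{\tau}(r)\, \sigma^n$. The $\tau$-image of a single nonzero path is a nonzero monomial in $B$, so $\bar{\tau}(r) \not = 0$, and cancelling in the integral domain $B$ yields $\overbar{p} = \overbar{q}\, \sigma^n$. For the ``in particular'' statement, suppose $\sigma \nmid \overbar{p}$ and $\sigma \nmid \overbar{q}$. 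If $n \geq 1$ the equation $\overbar{p} = \overbar{q}\sigma^n$ would give $\sigma \mid \overbar{p}$; if $n \leq -1$ we rewrite it as $\overbar{q} = \overbar{p}\sigma^{-n}$ and get $\sigma \mid \overbar{q}$; either is a contradiction, forcing $n = 0$ and hence $\overbar{p} = \overbar{q}$.

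I do not anticipate a serious obstacle: once the multiplicativity of $\bar{\tau}$ and the integral-domain cancellation are in place, the only technical point is the lift bookkeeping which checks that attaching $r^+$ to $p^+$ at $i^+ + u$ produces the same endpoint as attaching $q^+$ to $r^+$ at $j^+ + v$, and this is immediate from $p, q \in \mathcal{C}^u$.
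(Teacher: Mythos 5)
Your proposal is correct and follows essentially the same route as the paper: both introduce a connecting path $r$ from $\operatorname{t}(p)$ to $\operatorname{t}(q)$, observe that $(rp)^+$ and $(qr)^+$ have coincident endpoints, apply Lemma \ref{here2} to compare them up to a power of $\sigma$, and cancel $\overbar{r}$ in the integral domain $B$. The only cosmetic difference is that you invoke Lemma \ref{here2}.2 directly at the level of $\bar{\tau}$-images, while the paper invokes Lemma \ref{here2}.1 in $A$ and then applies $\bar{\tau}$; these are interchangeable.
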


\begin{proof}
Consider cycles $p,q \in \mathcal{C}^u$.
Since $Q$ is a dimer quiver, there is a path $r$ from $\operatorname{t}(p)$ to $\operatorname{t}(q)$.
Thus there is some $m, n \geq 0$ such that
$$rp \sigma_{\operatorname{t}(p)}^m = qr \sigma_{\operatorname{t}(p)}^n,$$
by Lemma \ref{here2}.1.
Furthermore, $\tau$ is an algebra homomorphism by Lemma \ref{tau'A'}.
Thus
$$\overbar{r}  \overbar{p} \sigma^m = \bar{\tau}\left(rp \sigma_{\operatorname{t}(p)}^n \right) = \bar{\tau}\left(qr \sigma_{\operatorname{t}(p)}^n \right) = \overbar{q}  \overbar{r} \sigma^n.$$
Therefore $\overbar{p} = \overbar{q} \sigma^{n-m}$ since $B$ is an integral domain.
\end{proof}

\begin{Lemma} \label{longlist3}
Suppose $A$ is cancellative.
\begin{enumerate}
 \item If $a \in Q_1$, $p \in \hat{\mathcal{C}}_{\operatorname{t}(a)}^u$, and $q \in \hat{\mathcal{C}}_{\operatorname{h}(a)}^u$, then $ap = qa$.
 \item Each vertex corner ring $e_iAe_i$ is commutative.
\end{enumerate}
\end{Lemma}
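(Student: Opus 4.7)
The plan is to treat the two claims separately, beginning with the cleaner Claim (2).

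For Claim (2), fix $p, q \in e_i A e_i$. Their products $pq$ and $qp$ both lie in $e_i A e_i$, and their lifts run from $i^+$ to $i^+ + u + v$, where $u, v$ are the displacements of $p, q$. Lemma~\ref{here2}.1 then produces $pq\,\sigma_i^m = qp\,\sigma_i^n$ for some $m, n \geq 0$. Cancellativity---a mismatch would otherwise yield a non-cancellative pair after cancelling $\sigma_i^{\min(m,n)}$---gives $pq = qp\,\sigma_i^k$ for some $k \geq 0$ (after possibly swapping the roles of $p$ and $q$). Applying $\bar{\tau}$ and using commutativity of $B$ one computes $\bar{\tau}(pq) = \bar{\tau}(p)\bar{\tau}(q) = \bar{\tau}(q)\bar{\tau}(p) = \bar{\tau}(qp)$; substituting into $\bar{\tau}(pq) = \bar{\tau}(qp)\sigma^k$ gives $\bar{\tau}(qp)(\sigma^k - 1) = 0$. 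Since $qp$ is a nonzero path, $\bar{\tau}(qp)$ is a nonzero monomial in the integral domain $B$, so $\sigma^k = 1$. By Lemma~\ref{at least one}, $\mathcal{S} \neq \emptyset$, hence $\sigma \neq 1$, forcing $k = 0$; thus $pq = qp$.

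For Claim (1), set $i := \operatorname{t}(a)$. The lifts of $ap$ and $qa$ share endpoints $i^+$ and $\operatorname{h}(a)^+ + u$, so Lemma~\ref{here2}.1 and cancellativity yield $ap = qa\,\sigma_i^k$ for some $k \geq 0$ (again possibly after swapping roles). Applying $\bar{\tau}$ and cancelling the nonzero monomial $\bar{\tau}(a)$ in $B$ produces $\bar{\tau}(p) = \bar{\tau}(q)\,\sigma^k$. Unlike in (2), the two sides are not automatically identified by commutativity, so $k = 0$ must be argued via the $\hat{\mathcal{C}}^u$-structure. Cancellativity together with the contrapositive of Proposition~\ref{here} gives $\hat{\mathcal{C}}^u_j \neq \emptyset$ for every $j \in Q_0$, so Lemma~\ref{figure3lemma} furnishes a simple matching $D \in \mathcal{S}$ such that $Q \setminus D$ supports every cycle in $\widehat{\mathcal{C}}^u$. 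Since two cycles in $\hat{\mathcal{C}}^u$ with common displacement $u$ can be represented by non-transversely-intersecting lifts drawn in parallel, the maximal family $\widehat{\mathcal{C}}^u$ may be chosen to contain representatives of both $p$ and $q$. Then $p, q$ are supported on $Q \setminus D$, so $x_D \nmid \bar{\tau}(p)$ and $x_D \nmid \bar{\tau}(q)$; because $x_D \mid \sigma$, comparing $x_D$-multiplicities in $\bar{\tau}(p) = \bar{\tau}(q)\sigma^k$ forces $k = 0$, and hence $ap = qa$.

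The main obstacle is this last step in Claim (1)---equivalently, establishing that $\sigma \nmid \bar{\tau}(p)$ for every $p \in \hat{\mathcal{C}}^u$ when $A$ is cancellative, a partial converse to Lemma~\ref{here3'}.3. Once this geometric point is granted, the remainder reduces to routine divisibility bookkeeping inside the integral domain $B$ combined with Lemma~\ref{here2}.1 and the absence of non-cancellative pairs.
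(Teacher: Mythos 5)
Your proof of Claim (2) is correct and is essentially the paper's argument, just routed through Lemma \ref{here2}.1 applied to $pq$ and $qp$ directly rather than through a closing path $r$ with $rqp=\sigma_i^m$; both reduce to $\sigma^k=1$ in the integral domain $B$ together with $\sigma\neq 1$ from Lemma \ref{at least one}.

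Claim (1) is where the gap is, and you have correctly located it yourself: everything reduces to showing $\sigma\nmid\overbar{p}$ for $p\in\hat{\mathcal{C}}^u$ when $A$ is cancellative, but your justification of that point does not work as written. You assert that ``the maximal family $\widehat{\mathcal{C}}^u$ may be chosen to contain representatives of both $p$ and $q$'' because homologous cycles ``can be drawn in parallel.'' That is an isotopy-of-curves intuition, not a statement about paths in $Q^+$: two cycles in $\hat{\mathcal{C}}^u$ (and even a single cycle and its own $\mathbb{Z}^2$-translates) may be forced to cross transversely at vertices, and one cannot freely deform a path in a quiver. The paper treats exactly this as a nontrivial issue: Lemma \ref{figure2lemma} performs a surgery (swapping the segments $p_2$ and $q_2$, which requires Lemma \ref{columns and pillars}.2 and the no-interior-vertices reduction) to produce non-crossing representatives, and the proof of Proposition \ref{circle 1}.1 still has to handle separately the case where $p$ admits \emph{no} representative in $\widehat{\mathcal{C}}^u$, using Lemma \ref{longlist} and the identity $\overbar{p}\,\overbar{q}=\overbar{s}^2$. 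So the statement you need is precisely Proposition \ref{circle 1}.1; since its proof in the paper does not rely on the present lemma, you could repair your argument by proving that proposition first and citing it, but your one-sentence justification is not a proof of it.

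It is worth noting that the paper's own proof of Claim (1) avoids this machinery entirely: from $qa = ap\,\sigma_{\operatorname{t}(a)}^{n-m}$ with $n-m\geq 1$ it multiplies by the complementary path $b$ of $a$ in a unit cycle to get $q = apb\,\sigma_{\operatorname{h}(a)}^{n-m-1}$, and then observes that $(ba)^+$ is a null-homotopic cycle, so $(q^2)^+$ acquires a cyclic subpath, contradicting $q\in\hat{\mathcal{C}}$. This is more elementary than the route through simple matchings and needs no choice of $\widehat{\mathcal{C}}^u$; if you want to keep your divisibility-based strategy, you must first supply a complete proof of $\sigma\nmid\overbar{p}$ for $p\in\hat{\mathcal{C}}^u$ along the lines of Proposition \ref{circle 1}.1.
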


\begin{proof}
(1) Suppose $a \in Q_1$, $p \in \hat{\mathcal{C}}_{\operatorname{t}(a)}^u$, and $q \in \hat{\mathcal{C}}_{\operatorname{h}(a)}^u$.
Then
$$\operatorname{t}\left((ap)^+\right) = \operatorname{t}\left((qa)^+\right) \ \ \text{ and } \ \ \operatorname{h}\left((ap)^+\right) = \operatorname{h}\left((qa)^+\right).$$
Let $r^+$ be a path in $Q^+$ from $\operatorname{h}\left( (ap)^+ \right)$ to $\operatorname{t}\left( (ap)^+ \right)$.
Then by Lemma \ref{here3'}.2, there is some $m, n \geq 1$ such that
$$rap = \sigma_{\operatorname{t}(a)}^m \ \ \text{ and } \ \ rqa = \sigma_{\operatorname{t}(a)}^n.$$

Assume to the contrary that $m < n$.
Then $qa = ap \sigma_{\operatorname{t}(a)}^{n-m}$ since $A$ is cancellative.
Let $b$ be a path such that $ab$ is a unit cycle.
By Lemma \ref{sigma}, we have
$$q \sigma_{\operatorname{h}(a)} = qab = ap \sigma_{\operatorname{t}(a)}^{n-m} b = apb \sigma^{n-m}_{\operatorname{h}(a)}.$$
Thus, since $A$ is cancellative and $n-m \geq 1$,
$$q = apb \sigma^{n-m-1}_{\operatorname{h}(a)}.$$
Furthermore, $(ba)^+$ is cycle in $Q^+$ since $ba$ is a unit cycle.
But this is a contradiction since $q$ is in $\hat{\mathcal{C}}^u$.
Whence $m = n$.
Therefore $qa = ap$.

(2) Consider cycles $p,q \in e_iAe_i$.
Since $I$ is generated by binomials, it suffices to show that $qp = pq$.
Let $r^+$ be a path in $Q^+$ from $\operatorname{h}\left((qp)^+\right)$ to $\operatorname{t}\left((qp)^+\right)$.
Set $r := \pi(r^+)$.
Then $(rqp)^+$ and $(rpq)^+$ are cycles.
In particular, there is some $m,n \geq 1$ such that
$$rqp = \sigma_i^m \ \ \text{ and } \ \ rpq = \sigma_i^n,$$
by Lemma \ref{here3'}.2.
Thus, since $\tau$ is an algebra homomorphism,
$$\sigma^m = \overbar{rqp} = \overbar{r} \overbar{q} \overbar{p} = \overbar{r} \overbar{p} \overbar{q} = \overbar{rpq} = \sigma^n.$$
Furthermore, $\sigma \not = 1$ by Lemma \ref{at least one}.
Whence $m = n$ since $B$ is an integral domain.
Thus $rqp = \sigma_i^m = rpq$.
Therefore $qp = pq$ since $A$ is cancellative.
\end{proof}

\begin{Proposition} \label{circle 1}
Let $u \in \mathbb{Z}^2$.
Suppose (i) $A$ is cancellative, or (ii) $\hat{\mathcal{C}}_i^u \not = \emptyset$ for each $i \in Q_0$.
\begin{enumerate}
 \item If $p \in \hat{\mathcal{C}}^u$, then $\sigma \nmid \overbar{p}$.
 \item If $p,q \in \hat{\mathcal{C}}^u$, then $\overbar{p} = \overbar{q}$.
 \item If a cycle $p$ is formed from subpaths of cycles in $\hat{\mathcal{C}}^u$, then $p \in \hat{\mathcal{C}}$.
 \item If $p,q \in \hat{\mathcal{C}}_i^u$, then $p = q$.
\end{enumerate}
\end{Proposition}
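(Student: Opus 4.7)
I would first observe that hypothesis (i) implies hypothesis (ii) when $u\neq 0$: Lemma \ref{at least one} gives $\mathcal{S}\neq\emptyset$, and then the contrapositive of Proposition \ref{here} forces $\hat{\mathcal{C}}^u_i\neq\emptyset$ for every $i\in Q_0$. So I may work uniformly under (ii). Under (ii), Lemma \ref{figure2lemma} provides $\widehat{\mathcal{C}}^u_i\neq\emptyset$ for each $i$, and Lemma \ref{figure3lemma} produces a simple matching $D$ (or two simple matchings $D_1,D_2$ in the column case) for which $Q\setminus D$ supports every cycle of $\widehat{\mathcal{C}}^u$. In particular $x_D\nmid \overbar{q}$, so $\sigma\nmid\overbar{q}$, for every $q\in\widehat{\mathcal{C}}^u$.

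For (1), given $p\in\hat{\mathcal{C}}^u$, Lemma \ref{longlist} lets me write $\overbar{p}=\overbar{q_0}\sigma^n$ with $q_0\in\widehat{\mathcal{C}}^u$ and $n\in\mathbb{Z}$. Integrality of $B$ together with $\sigma\nmid\overbar{q_0}$ forces $n\geq 0$. To exclude $n>0$ I would argue geometrically: the representative $\tilde{p}$ can be taken with $\tilde{p}^+$ non-self-intersecting and traveling in the same direction $u$ as the $\widehat{\mathcal{C}}^u$-lifts, so by the column/pillar decomposition of Lemma \ref{columns and pillars} applied to $\widehat{\mathcal{C}}^u$, its arrows either lie along column/pillar boundaries (which avoid $D$ by the construction in Lemma \ref{figure3lemma}) or they sit opposite those of $D_1$ (so that the alternative simple matching $D_2$ from Lemma \ref{figure3lemma} is avoided). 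Either way some simple matching has no arrow in $p$, giving $\sigma\nmid\overbar{p}$. Claim (2) then follows at once: combining (1) with Lemma \ref{longlist} yields $\overbar{p}=\overbar{q}\sigma^n$ with both sides $\sigma$-indivisible, whence $n=0$.

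For (3), a cycle $p$ assembled from subpaths of $\hat{\mathcal{C}}^u$-cycles inherits avoidance of the simple matching $D$: each constituent subpath avoids $D$ by (1), so $p$ does too, and hence $\sigma\nmid\overbar{p}$. Since hypothesis (ii) together with Lemma \ref{here3} verifies the standing hypothesis of Lemma \ref{here3'}, the contrapositive of Lemma \ref{here3'}.3 delivers $p\in\hat{\mathcal{C}}$.

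For (4), I would choose lifts so that $\operatorname{t}(\tilde{p}^+)=\operatorname{t}(\tilde{q}^+)=i^+$ and $\operatorname{h}(\tilde{p}^+)=\operatorname{h}(\tilde{q}^+)=i^++u$. By Lemma \ref{here2}.1 there exist $m,n\geq 0$ with $p\sigma_i^m=q\sigma_i^n$; applying $\bar{\tau}$, using (2), and noting $\sigma\neq 1$ (since $\mathcal{S}\neq\emptyset$) forces $m=n$. In case (i), cancellativity immediately gives $p=q$. In case (ii), I would induct on the number of interior vertices in the compact region $\mathcal{R}_{p,q}$: the base case is exactly Lemma \ref{columns and pillars}.2 in the pillar configuration (the column configuration is excluded because the tails coincide), and the inductive step uses $\hat{\mathcal{C}}^u_j\neq\emptyset$ at any interior vertex $j$ to split $\mathcal{R}_{p,q}$ into two strictly smaller regions to which the inductive hypothesis applies. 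The main obstacle throughout is the last step of (1): elements of $\hat{\mathcal{C}}^u$ need not themselves lie in $\widehat{\mathcal{C}}^u$, and controlling how their non-self-intersecting lifts interact with the column/pillar skeleton—so as to guarantee avoidance of at least one simple matching—requires a careful case analysis of how a $u$-monotone non-self-intersecting path can traverse the decomposition.
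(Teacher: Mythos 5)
Your reduction of (i) to (ii), your deduction of (2) from (1) via Lemma \ref{longlist}, and your use of Lemma \ref{here3'}.3 together with Lemma \ref{here3} in (3) all match the paper. But the crucial step---showing $\sigma \nmid \overbar{p}$ for a cycle $p \in \hat{\mathcal{C}}^u$ that has \emph{no} representative in the chosen non-crossing set $\widehat{\mathcal{C}}^u$---is exactly where your argument stops. You propose a ``geometric case analysis'' of how a $u$-monotone non-self-intersecting lift traverses the column/pillar decomposition, and you yourself flag this as the main unresolved obstacle. That is a genuine gap: the simple matchings $D_1, D_2$ produced by Lemma \ref{figure3lemma} are only guaranteed to avoid the cycles of $\widehat{\mathcal{C}}^u$ and the interiors of the columns and pillars they bound, and a cycle of $\hat{\mathcal{C}}^u$ that crosses members of $\widehat{\mathcal{C}}^u$ transversely can a priori pick up arrows of $D_1$ in one pillar and of $D_2$ in another, so no single matching is visibly avoided. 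The paper sidesteps the geometry entirely with an algebraic trick: writing $p = p_3p_2p_1$ and a crossing cycle $q = q_3q_2q_1$ as in Figure \ref{figure2}, the recombined cycles $s = p_3q_2p_1$ and $t = q_3p_2q_1$ lie in $\widehat{\mathcal{C}}^u$, satisfy $\overbar{s} = \overbar{t}$ by Lemma \ref{longlist}, and give the identity $\overbar{p}\,\overbar{q} = \overbar{s}\,\overbar{t} = \overbar{s}^2$; if $\sigma$ divided $\overbar{p}$ it would divide $\overbar{s}^2$, hence divide $\overbar{s}$ because $\sigma = \prod_{D \in \mathcal{S}} x_D$ is squarefree---contradicting $\tilde{s} \in \widehat{\mathcal{C}}^u$. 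Without some such argument, claims (1)--(4) are not established.

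Two smaller points. In (3) you say each constituent subpath ``avoids $D$ by (1)'', but (1) only gives, for each constituent cycle $q_j$, \emph{some} simple matching not dividing $\overbar{q}_j$; to get a single $D$ working for all $j$ simultaneously you must first invoke (2) to see that all the $\overbar{q}_j$ coincide, as the paper does. In (4), your reduction to $m=n$ and the appeal to cancellativity in case (i) are fine, and your proposed induction on interior vertices in case (ii) is plausible and close in spirit to the paper's terse appeal to Claim (3) and the pillar configuration of Figure \ref{figure3}.ii, but it still rests on the unproven Claim (1) through Claim (2).
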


Note that in Claim (4) the cycles $p$ and $q$ are based at the same vertex $i$, whereas in Claim (2) $p$ and $q$ may be based at different vertices.

\begin{proof}
If $A$ is cancellative, then $\hat{\mathcal{C}}_i^u \not = \emptyset$ for each $i \in Q_0$, by Proposition \ref{here}.
Therefore assumption (i) implies assumption (ii), and so it suffices to suppose (ii) holds.

(1) The set $\hat{\mathcal{C}}^0$ consists of the vertices of $Q$. 
Whence $p \in \hat{\mathcal{C}}^0$ implies $\sigma \nmid \overbar{p}$.
So suppose $u \in \mathbb{Z}^2 \setminus 0$.

Fix a maximal set $\widehat{\mathcal{C}}^u$ of representatives of cycles in $\hat{\mathcal{C}}^u$ whose lifts do not intersect transversely in $\mathbb{R}^2$.
Consider a cycle $p \in \hat{\mathcal{C}}^u$.
If $p$ has a representative $\tilde{p}$ in $\widehat{\mathcal{C}}^u$, then $\sigma \nmid \overbar{p}$, by Lemma \ref{figure3lemma}.
It thus suffices to suppose that $p$ does not have a representative belonging to $\widehat{\mathcal{C}}^u$.
In particular, for any representative $\tilde{p}$ of $p$, there are cycles $\tilde{s}, \tilde{t} \in \widehat{\mathcal{C}}^u$ such that
$$s = p_3q_2p_1, \ \ \ t = q_3p_2q_1, \ \ \ p = p_3p_2p_1,$$
as in Figure \ref{figure2}.

Since $\tilde{s},\tilde{t} \in \widehat{\mathcal{C}}^u$, we have $\sigma \nmid \overbar{s}$ and $\sigma \nmid \overbar{t}$.
Thus
\begin{equation} \label{no black holes}
\overbar{s} = \overbar{t}
\end{equation}
by Lemma \ref{longlist}.
Set $q := q_3q_2q_1$.

Assume to the contrary that $\sigma \mid \overbar{p}$.
Then
$$\sigma \mid \overbar{p} \overbar{q} = \overbar{p}_3\overbar{p}_2\overbar{p}_1 \overbar{q}_3\overbar{q}_2\overbar{q}_1 = \overbar{s} \overbar{t} \stackrel{\textsc{(i)}}{=} \overbar{s}^2,$$
where (\textsc{i}) holds by (\ref{no black holes}).
Therefore $\sigma \mid \overbar{s}$ since $\sigma = \prod_{x \in \mathcal{S}}x$.
But this is not possible since $\tilde{s} \in \widehat{\mathcal{C}}^u$.

(2) Follows from Claim (1) and Lemma \ref{longlist}.

A direct proof assuming $A$ is cancellative:
Suppose $p,q \in \hat{\mathcal{C}}^u$.
Let $r$ be a path from ${\operatorname{t}(p)}$ to ${\operatorname{t}(q)}$.
Then $rp = qr$ by Lemma \ref{longlist3}.1.
Thus
$$\overbar{r}  \overbar{p} = \overbar{rp} = \overbar{qr} = \overbar{q}  \overbar{r} = \overbar{r} \overbar{q}.$$
Therefore $\overbar{p} = \overbar{q}$ since $B$ is an integral domain.

(3) Let $p = p_{\ell} \cdots p_1 \in \mathcal{C}^u$ be a cycle formed from subpaths $p_j$ of cycles $q_j$ in $\hat{\mathcal{C}}^u$.
Then
$$g := \overbar{q}_1 = \cdots = \overbar{q}_{\ell}$$
by Claim (2).
Furthermore, $\sigma \nmid g$ by Claim (1).
In particular, there is a simple matching $x \in \mathcal{S}$ for which $x \nmid g$.
Whence $x \nmid \overbar{p}_j$ for each $1 \leq j \leq \ell$.
Thus $x \nmid \overbar{p}$.
Therefore $\sigma \nmid \overbar{p}$.
Consequently, $p \in \hat{\mathcal{C}}$ by the contrapositive of Lemma \ref{here3'}.3, with Lemma \ref{here3}.

(4) Follows from Claim (3) and Figure \ref{figure3}.ii.

A direct proof assuming $A$ is cancellative:
Suppose $p,q \in \hat{\mathcal{C}}_i^u$.
Let $r^+$ be a path in $Q^+$ from $\operatorname{h}\left(p^+ \right)$ to $\operatorname{t}\left( p^+ \right)$.
Then there is some $m,n \geq 1$ such that
$$rp = \sigma_i^m \ \ \text{ and } \ \ rq = \sigma_i^n,$$
by Lemma \ref{here3'}.2.
Suppose $m \leq n$.
Then $rp \sigma_i^{n-m} = \sigma_i^n = rq$.
Thus $p\sigma_i^{n-m} = q$ since $A$ is cancellative.
But then Claim (1) implies $m = n$ since by assumption $p$ and $q$ are in $\hat{\mathcal{C}}^u$.
Therefore $p = q$.
\end{proof}

\begin{Lemma} \label{area to zero}
Suppose $\hat{\mathcal{C}}^u_i \not = \emptyset$ for each $u \in \mathbb{Z}^2$ and $i \in Q_0$.
Let $\varepsilon_1, \varepsilon_2 \in \mathbb{Z}$.
There is an $n \gg 1$ such that if
$$p \in \hat{\mathcal{C}}^{(\varepsilon_1,0)}_i \ \ \ \text{ and } \ \ \ q \in \hat{\mathcal{C}}^{(n \varepsilon_1, \varepsilon_2)}_i,$$
then $\sigma \nmid \overbar{pq}$.
\end{Lemma}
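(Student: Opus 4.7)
The plan is to turn the divisibility question into a valuation calculation on the $\bar{\tau}$-images. By Lemma \ref{longlist} together with Proposition \ref{circle 1}.1, the image $\overbar{s}$ of any $s \in \hat{\mathcal{C}}^u$ is a monomial in $B$ depending only on $u$; write $g_u$ for this common value, and $v_D$ for the exponent of $x_D$ in a monomial. Two degenerate cases are immediate: if $\varepsilon_1 = 0$ then $\hat{\mathcal{C}}^{(0,0)}_i \subseteq \hat{\mathcal{C}}^0 = \emptyset$ and the statement is vacuous, while if $\varepsilon_2 = 0$ then $p^n \in \mathcal{C}^{(n\varepsilon_1,0)}_i$ satisfies $\overbar{p^n} = \overbar{p}^n$, and Lemma \ref{longlist} together with $\sigma \nmid \overbar{p}$, $\sigma \nmid \overbar{q}$ forces $\overbar{q} = \overbar{p}^n$; hence $\overbar{pq} = \overbar{p}^{n+1}$, which is not divisible by the squarefree monomial $\sigma$. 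So I assume $\varepsilon_1, \varepsilon_2 \neq 0$.

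Next, I fix any $r \in \hat{\mathcal{C}}^{(0,\varepsilon_2)}_i$ (nonempty by the standing hypothesis) and set $f := \overbar{p} = g_{(\varepsilon_1,0)}$ and $h := \overbar{r} = g_{(0,\varepsilon_2)}$. The concatenation $p^n r$ is a cycle at $i$ in $\mathcal{C}^{(n\varepsilon_1,\varepsilon_2)}$ with $\overbar{p^n r} = f^n h$ (by multiplicativity of $\bar{\tau}$ on $e_i A e_i$, Lemma \ref{tau'A'}). Applying Lemma \ref{longlist} to $p^n r$ and $q$ yields $f^n h = \overbar{q}\,\sigma^{m'}$ for some $m' \in \mathbb{Z}$. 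Comparing $v_D$-exponents of both sides and using $\sigma \nmid \overbar{q}$ (Proposition \ref{circle 1}.1) pins down
$$m' = m(n) := \min_{D \in \mathcal{S}}\bigl(n\,v_D(f) + v_D(h)\bigr),$$
so that $v_D(\overbar{q}) = n\,v_D(f) + v_D(h) - m(n)$ for every $D \in \mathcal{S}$.

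The punchline is a pigeonhole estimate in $n$. Take $n > \max_D v_D(h)$. Any $D$ with $v_D(f) \geq 1$ contributes at least $n$ to the minimum defining $m(n)$, while some $D^* \in \mathcal{S}$ with $v_{D^*}(f) = 0$ exists (since $\sigma \nmid f$) and contributes only $v_{D^*}(h) < n$. Thus $m(n)$ is realized at some $D^*$ with $v_{D^*}(f) = 0$, forcing $v_{D^*}(\overbar{q}) = v_{D^*}(h) - m(n) = 0$. Then $x_{D^*}$ divides neither $\overbar{p}$ nor $\overbar{q}$, so $x_{D^*} \nmid \overbar{pq}$, giving $\sigma \nmid \overbar{pq}$. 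The only delicate step is the uniqueness of $m'$ as $m(n)$, which rests on $\sigma \nmid \overbar{q}$ and integrality of $B$; once that is in hand, the rest is the pigeonhole estimate made possible by keeping $f$ and $h$ fixed while letting $n \to \infty$.
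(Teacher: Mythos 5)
Your proof is correct, but it takes a genuinely different route from the paper's. The paper argues geometrically in the universal cover: it tracks the regions bounded by lifts of consecutive cycles $q_n \in \hat{\mathcal{C}}^{(n\varepsilon_1,\varepsilon_2)}_i$, shows these regions eventually stabilize so that $q'_{n+1} = sq'_n$ modulo $I$ for a fixed $s \in \hat{\mathcal{C}}^{(\varepsilon_1,0)}$, and then transfers the non-divisibility $x_D \nmid \overbar{q}_n$ to $\overbar{s} = \overbar{p}$. You instead work entirely with the exponent vectors of the monomial $\bar{\tau}$-images: comparing $q$ against the auxiliary cycle $p^n r$ (with $r \in \hat{\mathcal{C}}^{(0,\varepsilon_2)}_i$, nonempty by the standing hypothesis) via Lemma \ref{longlist}, and exploiting the fact that the contribution $n\,v_D(f)$ grows linearly in $n$ exactly for those $D$ containing an arrow of $p$, so that for $n$ large the minimizing variable $D^*$ must satisfy $v_{D^*}(f)=0$ and hence $v_{D^*}(\overbar{q})=0$. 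Both proofs lean on Proposition \ref{circle 1}.1--2 and Lemma \ref{longlist}, but yours avoids the delicate geometric claims about intersecting representatives and shrinking areas, and it yields an explicit bound $n > \max_{D}v_D(\overbar{r})$ depending only on $\varepsilon_2$. Your handling of the degenerate cases is also sound, though in the $\varepsilon_2=0$ case the reason $\sigma\nmid\overbar{p}^{\,n+1}$ is not merely that $\sigma$ is squarefree but that some $x_D$ with $v_D(\overbar{p})=0$ also has exponent zero in every power of $\overbar{p}$; this is what you use implicitly and it is correct. The only loss relative to the paper's argument is that the geometric stabilization $q'_{n+1}=sq'_n$, which the paper's proof establishes along the way, is not recovered by your method, but nothing downstream in the paper relies on it beyond the stated lemma.
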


\begin{proof}
Fix $\varepsilon_1, \varepsilon_2 \in \mathbb{Z}$ and $i \in Q_0$.
Consider the set of cycles
$$p \in \hat{\mathcal{C}}^{(\varepsilon_1,0)}_i \ \ \ \text{ and } \ \ \  q_n \in \hat{\mathcal{C}}^{(n \varepsilon_1, \varepsilon_2)}_i,$$
with $n \geq 0$.

(i) We claim that for each $n \geq 1$, $q_n^+$ lies in the region $\mathcal{R}_{p^2q_{n-1},q_{n+1}}$ (modulo $I$).
This is shown in Figure \ref{area to zero fig}.i.
Indeed, suppose a representative $\tilde{q}_n^+$ of $q_n^+$ intersects a representative $\tilde{q}_{n-1}^+$ of $q_{n-1}^+$, as shown in Figure \ref{area to zero fig}.ii.
Then $q_{n-1}$ and $q_n$ factor into paths
$$q_{n-1} = s_3s_2s_1 \ \ \ \text{ and } \ \ \ q_n = t_3t_2t_1,$$
where $\operatorname{t}(s_2^+) = \operatorname{t}(t_2^+)$ and $\operatorname{h}(s_2^+) = \operatorname{h}(t_2^+)$.
In particular, there is some $m \in \mathbb{Z}$ such that
$$\overbar{s}_2 = \overbar{t}_2 \sigma^m,$$
by Lemma \ref{here2}.2.
Set $r := t_3s_2t_1$.

Since $q_{n-1}$ and $q_n$ are in $\hat{\mathcal{C}}$, we have
\begin{equation} \label{sigma not divide}
\sigma \nmid \overbar{q}_{n-1} \ \ \ \text{ and } \ \ \ \sigma \nmid \overbar{q}_n,
\end{equation}
by Proposition \ref{circle 1}.1.
Hence $\sigma \nmid \overbar{s}_2$ and $\sigma \nmid \overbar{t}_2$.
Thus $m = 0$.
Therefore
$$\overbar{r} = \overbar{t}_3 \overbar{s}_2 \overbar{t}_1 = \overbar{t}_3\overbar{t}_2\overbar{t}_1 = \overbar{q}_n.$$
In particular, $\sigma \nmid \overbar{r}$ by (\ref{sigma not divide}).
Thus the cycle $r$ is in $\hat{\mathcal{C}}$ by Lemmas \ref{here3'}.4 and \ref{here3}.
Furthermore, $r$ is in $\mathcal{C}^{(n \varepsilon_1, \varepsilon_2)}_i$ by construction.
Whence $r$ is in $\hat{\mathcal{C}}^{(n \varepsilon_1, \varepsilon_2)}_i$.
Therefore $r = q_n$ (modulo $I$) by Proposition \ref{circle 1}.4.
This proves our claim.

(ii) By Claim (i), there is a cycle $s \in \hat{\mathcal{C}}^{(\varepsilon_1,0)}$ such that for each $n \geq 1$, the area of the region
$$\mathcal{R}_{sq'_{n},q'_{n+1}},$$
bounded by a rightmost subpath ${q'}^{+}_{n}$ of $q_{n}^+$, a rightmost subpath ${q'}^{+}_{n+1}$ of $q_{n+1}^+$, and $s^+$, tends to zero (modulo $I$) as $n \to \infty$.
See Figure \ref{area to zero fig}.iii.
(The case $r = p$ is shown in Figure \ref{area to zero fig}.i.)
Since $Q$ is finite, there is some $N \gg 1$ such that if $n \geq N$, then
\begin{equation*} \label{snq'n}
q'_{n+1} = sq'_{n} \ \ \ \text{(modulo $I$)}.
\end{equation*}

Fix $n \geq N$.
There is a simple matching $x \in \mathcal{S}$ such that $x \nmid \overbar{q}_{n}$, by Proposition \ref{circle 1}.1.
Whence
$$x \nmid \overbar{q}'_{n+1} = \overbar{s} \overbar{q}'_{n}.$$
In particular, $x \nmid \overbar{s}$.
But $\overbar{p} = \overbar{s}$ since $p$ and $s$ are both in $\hat{\mathcal{C}}^{(\varepsilon_1,0)}$, by Proposition \ref{circle 1}.2.
Thus $x \nmid \overbar{p}$.
Therefore $x \nmid \overbar{p} \overbar{q}_{n}$, and so $\sigma \nmid \overbar{p}\overbar{q}_{n}$.
\end{proof}

\begin{figure}
$$\begin{array}{cc}
(i) &
\xy 0;/r.3pc/:
(-30,-10)*{\cdot}="1";(-20,-10)*{\cdot}="2";(-10,-10)*{\cdot}="3";(0,-10)*{\cdot}="4";(10,-10)*{\cdot}="5";(20,-10)*{\cdot}="6";(30,-10)*{\cdot}="7";(40,-10)*{\cdot}="8";
(-30,10)*{\cdot}="9";(-20,10)*{\cdot}="10";(-10,10)*{\cdot}="11";(0,10)*{\cdot}="12";(10,10)*{\cdot}="13";(20,10)*{\cdot}="14";(30,10)*{\cdot}="15";(40,10)*{\cdot}="16";
(-30,11.7)*{\text{\scriptsize{$q_0$}}}="";(-20,11.7)*{\text{\scriptsize{$q_1$}}}="";(-10,11.7)*{\text{\scriptsize{$q_2$}}}="";(0,11.7)*{\text{\scriptsize{$q_3$}}}="";
(15,11.7)*{\cdots}="";
(30,11.7)*{\text{\scriptsize{$q_n$}}}="";(40,11.7)*{\text{\scriptsize{$q_{n+1}$}}}="";
{\ar"1";"9"};
{\ar@[red]_p"2";"3"};{\ar@[red]_p"3";"4"};{\ar@[red]"4";"5"};{\ar@[red]"5";"6"};{\ar@[red]"6";"7"};{\ar@[red]"7";"8"};
{\ar"1";"10"};{\ar"1";"11"};{\ar"1";"12"};{\ar"1";"13"};{\ar"1";"14"};{\ar@[blue]"1";"15"};{\ar@[blue]"1";"16"};
{\ar@[red]^{p}"15";"16"};{\ar@[red]_{p}"1";"2"};
{\ar@[red]^p"9";"10"};{\ar@[red]^p"10";"11"};{\ar@[red]^p"11";"12"};{\ar@[red]"12";"13"};{\ar@[red]"13";"14"};{\ar@[red]"14";"15"};
\endxy
\\ \\
(ii) &
\xy 0;/r.52pc/:
(-15,-5)*{\cdot}="1";(-5,-5)*{\cdot}="2";(5,-5)*{\cdot}="3";(15,-5)*{\cdot}="4";
(-15,5)*{}="5";(-5,5)*{\cdot}="6";(5,5)*{\cdot}="7";(15,5)*{\cdot}="8";
(-10,-2.5)*{}="9";(-10,0)*{\cdot}="10";(-7.5,2.5)*{\cdot}="11";(0,2.5)*{}="12";
(-5,5.9)*{\text{\scriptsize{$q_{n-1}$}}}="";(5,5.9)*{\text{\scriptsize{$q_n$}}}="";(15,5.9)*{\text{\scriptsize{$q_{n+1}$}}}="";
{\ar@[red]_p"1";"2"};{\ar_p"2";"3"};{\ar_p"3";"4"};
{\ar^p@[red]"6";"7"};{\ar^p"7";"8"};
{\ar"1";"8"};
{\ar@[blue]@{-}"1";"9"};{\ar@[blue]_{t_1}"9";"10"};{\ar@/^.6pc/^{t_2}@[blue]"10";"11"};{\ar@[blue]@{-}_{t_3}"11";"12"};{\ar@[blue]"12";"7"};
{\ar@[purple]^{s_1}"1";"10"};{\ar@[purple]_{s_2}"10";"11"};{\ar@[purple]^{s_3}"11";"6"};
\endxy
\\ \\
(iii) &
\xy 0;/r.3pc/:
(-30,-10)*{\cdot}="1";(-20,-10)*{\cdot}="2";(-10,-10)*{\cdot}="3";(0,-10)*{\cdot}="4";(10,-10)*{\cdot}="5";(20,-10)*{\cdot}="6";(30,-10)*{\cdot}="7";(40,-10)*{\cdot}="8";
(-30,10)*{\cdot}="9";
(-20,2.5)*{\cdot}="10";(-10,2.5)*{\cdot}="11";(0,2.5)*{\cdot}="12";(10,2.5)*{\cdot}="13";(20,2.5)*{\cdot}="14";(30,2.5)*{\cdot}="15";(40,2.5)*{\cdot}="16";
(-20,10)*{\cdot}="10'";(-10,10)*{\cdot}="11'";(0,10)*{\cdot}="12'";(10,10)*{\cdot}="13'";(20,10)*{\cdot}="14'";(30,10)*{\cdot}="15'";(40,10)*{\cdot}="16'";
(-30,11.7)*{\text{\scriptsize{$q_0$}}}="";(-20,11.7)*{\text{\scriptsize{$q_1$}}}="";(-10,11.7)*{\text{\scriptsize{$q_2$}}}="";(0,11.7)*{\text{\scriptsize{$q_3$}}}="";
(15,11.7)*{\cdots}="";
(30,11.7)*{\text{\scriptsize{$q_n$}}}="";(40,11.7)*{\text{\scriptsize{$q_{n+1}$}}}="";
(-30,2.5)*{\cdot}="25";
{\ar"1";"25"};{\ar"25";"9"};
{\ar@[red]_p"2";"3"};{\ar@[red]_p"3";"4"};{\ar@[red]"4";"5"};{\ar@[red]"5";"6"};{\ar@[red]"6";"7"};{\ar@[red]"7";"8"};
{\ar"1";"10"};{\ar"1";"11"};{\ar"1";"12"};{\ar"1";"13"};{\ar"1";"14"};{\ar@[blue]"1";"15"};{\ar@[blue]"1";"16"};
{\ar@[red]^{p}"15'";"16'"};{\ar@[red]_{p}"1";"2"};
{\ar"10";"10'"};{\ar"11";"11'"};{\ar"12";"12'"};{\ar"13";"13'"};{\ar"14";"14'"};{\ar@[blue]"15";"15'"};{\ar@[blue]"16";"16'"};
{\ar@[orange]^{s}"10";"11"};{\ar@[orange]^{s}"11";"12"};{\ar@[orange]"12";"13"};{\ar@[orange]"13";"14"};{\ar@[orange]"14";"15"};{\ar@[orange]^{s}"15";"16"};
{\ar@[red]^p"9";"10'"};{\ar@[red]^p"10'";"11'"};{\ar@[red]^p"11'";"12'"};{\ar@[red]"12'";"13'"};{\ar@[red]"13'";"14'"};{\ar@[red]"14'";"15'"};
{\ar@[orange]^{s}"25";"10"};
\endxy
\end{array}$$
\caption{Setups for Lemma \ref{area to zero}, drawn on the cover $Q^+$.
(In each case, $q_j$ labels the path from the lower line of $p$'s to the upper line of $p$'s with head at the corresponding vertex.)
In (ii): $q_{n-1}^+$ and $q_n^+$ intersect, and thus factor into paths $q_{n-1} = s_3s_2s_1$ and $q_n = t_3t_2t_1$, drawn in purple and blue respectively.
In (iii): the area of the region $\mathcal{R}_{s_nq'_{n},q'_{n+1}}$ tends to zero as $n \to \infty$.
Here, an infinite path in $\pi^{-1}(r^{\infty})$ is drawn in orange, each lift of the cycle $p$ is drawn in red, and $q^+_n$ and $q^+_{n+1}$ are drawn in blue.}
\label{area to zero fig}
\end{figure}

Consider the subset of arrows
\begin{equation*} \label{Q dagger def}
Q_1^{\mathcal{S}} := \left\{ a \in Q_1 \ | \ a \not \in x \text{ for each } x \in \mathcal{S} \right\},
\end{equation*}
where $\mathcal{S}$ is the set of simple matchings of $A$.

We will show in Theorem \ref{AKZ} below that the two assumptions considered in the following lemma, namely that $Q_1^{\mathcal{S}} \not = \emptyset$ and $\hat{\mathcal{C}}^u_i \not = \emptyset$ for each $u \in \mathbb{Z}^2$ and $i \in Q_0$, can never both hold.

\begin{Lemma} \label{purple lemma}
Suppose $\hat{\mathcal{C}}^u_i \not = \emptyset$ for each $u \in \mathbb{Z}^2$ and $i \in Q_0$.
Let $\delta \in Q_1^{\mathcal{S}}$.
There is a cycle $p \in \hat{\mathcal{C}}_{\operatorname{t}(\delta)}$ such that $\delta$ is not the rightmost arrow subpath of any representative of $p$.
\end{Lemma}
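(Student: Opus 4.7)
We argue by contradiction. Suppose every cycle $p \in \hat{\mathcal{C}}_{\operatorname{t}(\delta)}$ has some representative whose rightmost arrow is $\delta$. Set $i := \operatorname{t}(\delta)$ and $j := \operatorname{h}(\delta)$. By Proposition~\ref{circle 1}.4, for each $u \in \mathbb{Z}^2 \setminus 0$ the set $\hat{\mathcal{C}}^u_i$ contains a unique element modulo $I$, which I denote $p^{(u)}$, and by the contradiction hypothesis each $p^{(u)}$ admits a factorization $p^{(u)} = q^{(u)} \delta$ for some path $q^{(u)} \in e_i A e_j$. Note also that $\bar{\tau}(\delta) = 1$, since $\delta$ lies in no simple matching, so $\overbar{p^{(u)}} = \overbar{q^{(u)}}$.

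The first step is to apply Lemma~\ref{area to zero} with $\varepsilon_1 = \varepsilon_2 = 1$ to produce an integer $n \gg 1$ such that, setting $u := (1,0)$ and $v := (n,1)$, we have $\sigma \nmid \overbar{p^{(u)}\, p^{(v)}}$. Combined with Lemma~\ref{here3'}.3 (contrapositive) and Lemma~\ref{here3}, this implies $p^{(u)} p^{(v)} \in \hat{\mathcal{C}}^{u+v}_i$, and hence by Proposition~\ref{circle 1}.4 it equals $p^{(u+v)}$ modulo $I$. The representative $q^{(u)} \delta q^{(v)} \delta$ thus has, in its lift to $Q^+$, two translates of $\delta^+$: one issuing from $i^+$ and another issuing from $i^+ + v$.

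The contradiction should then come from combining two facts. First, by Lemma~\ref{figure3lemma} applied to the direction $u+v$, there is a simple matching $D \in \mathcal{S}$ whose complement supports every cycle in $\widehat{\mathcal{C}}^{u+v}$; since $\delta \in Q_1^{\mathcal{S}}$ we have $\delta \notin D$, forcing $\delta$ to appear only as a boundary arrow in the column/pillar decomposition of Lemma~\ref{columns and pillars}. Second, the contradiction hypothesis pins the rightmost $\delta$ of the \emph{unique} cycle $p^{(u+v)}$ to the basepoint of the lift, while the alternative representative $q^{(u)} \delta q^{(v)} \delta$ forces a further copy of $\delta^+$ interior to the lift. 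Any sequence of unit-cycle swaps (the generators of $I$) taking one representative to the other must move the interior $\delta^+$ across faces of the cover; tracing this process, I expect that either the interior $\delta^+$ completes a cyclic subpath with its complementary path $s$ in $\sigma_i = s\delta$, producing a $\mathcal{C}^0$-subcycle whose $\bar{\tau}$-image is a positive power of $\sigma$ by Lemma~\ref{here3'}.1, or it must traverse a unit cycle which inserts such a $\sigma$-factor. Either way, this contradicts $\sigma \nmid \overbar{p^{(u+v)}}$, established above via Lemma~\ref{area to zero} (and reiterated in Proposition~\ref{circle 1}.1).

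The main obstacle I anticipate is making the final step rigorous: showing that the two representatives $q^{(u+v)}\delta$ and $q^{(u)}\delta q^{(v)}\delta$ of $p^{(u+v)}$, which differ in the number of $\delta$-occurrences, cannot be reconciled via relations in $I$ without producing a $\mathcal{C}^0$-subcycle somewhere in the lift. Concretely, the challenge is to use the column/pillar decomposition to localize the two lifts of $\delta^+$ and to show that any homotopy joining them through unit cycles must enclose at least one full unit face based at $i^+$ or $j^+$, thereby introducing the forbidden $\sigma$-factor. An alternative, more quantitative, route would be to iterate the product construction via Lemma~\ref{area to zero} to obtain representatives of $p^{(ku)}$ containing arbitrarily many copies of $\delta$, and to derive a contradiction from comparing the resulting length growth with the non-crossing bound on $\widehat{\mathcal{C}}^{ku}$.
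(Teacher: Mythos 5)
Your setup is sound as far as it goes: the contradiction hypothesis, the uniqueness of $p^{(u)} \in \hat{\mathcal{C}}^u_i$ from Proposition \ref{circle 1}.4, the observation $\overbar{\delta} = 1$, and the use of Lemma \ref{area to zero} to produce a product $p^{(u)}p^{(v)} = p^{(u+v)}$ lying in $\hat{\mathcal{C}}$ are all genuine ingredients of the argument. But the proof stops exactly where the real work begins, and you say so yourself (``I expect that\dots'', ``the main obstacle I anticipate\dots''). The endgame you sketch --- that the representatives $q^{(u+v)}\delta$ and $q^{(u)}\delta q^{(v)}\delta$ of the same cycle carry different numbers of $\delta$-occurrences, and that any chain of unit-cycle relations connecting them must enclose a unit face and thereby introduce a factor of $\sigma$ --- is not justified, and I do not see how it can be made to work. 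Two $I$-equivalent paths may contain a given arrow with different multiplicities without either acquiring a cyclic subpath in its lift: the relations freely create and destroy occurrences of individual arrows, and the invariant they preserve is the $\bar{\tau}$-image, which is by construction the same for both representatives. So no contradiction has actually been exhibited.

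The paper's mechanism is different and is genuinely needed. First, it uses the \emph{full} clockwise fan of directions $u_0,\ldots,u_n$ (not just two), with cycles $p_m = p'_m\delta$ through $\delta$ satisfying $\sigma \nmid \overbar{p_{m+1}p_m}$ via Lemma \ref{area to zero}; this is what forces a translate of $\operatorname{t}(\delta^+)$ to lie in the \emph{interior} of some region $\mathcal{R}_{\tilde{s},\tilde{t}}$ bounded by two representatives of a single cycle $s = p_{m+1}p_m \in \hat{\mathcal{C}}$ (with only the two directions $u$ and $v$ you get no such enclosure). Second, for any simple matching $D$ with $x_D \nmid \overbar{s}$, every path $r$ from the boundary of $\mathcal{R}_{\tilde{s},\tilde{t}}$ to the trapped vertex closes up with a subpath of $s$ and with $\delta$ into a cycle of $Q^+$, whose $\bar{\tau}$-image is $\sigma^{\ell}$ with $\ell \geq 1$; since $x_D \nmid \overbar{\delta}$ and $x_D$ does not divide the image of the relevant subpath of $s$, it follows that $x_D \mid \overbar{r}$, so $\operatorname{t}(\delta)$ is a source in $Q \setminus D$ and $D$ is not simple. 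Hence $\sigma \mid \overbar{s}$, contradicting $s \in \hat{\mathcal{C}}$ by Lemmas \ref{here3'}.3 and \ref{here3}. This enclosure-plus-simplicity argument is the missing idea; your appeal to Lemma \ref{figure3lemma} and the column/pillar decomposition does not substitute for it, since $\delta \notin D$ only says that $\delta$ survives in $Q \setminus D$ and imposes no constraint that could produce a contradiction.
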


\begin{proof}
Assume to the contrary that there is an arrow $\delta$ which is a rightmost arrow subpath of some representative of each cycle in $\hat{\mathcal{C}}_{\operatorname{t}(\delta)}$.

(i) We first claim that there is some $u \in \mathbb{Z}^2 \setminus 0$ such that $\operatorname{t}(\delta^+)$ lies in the interior of the region $\mathcal{R}_{\tilde{s},\tilde{t}}$ bounded by the lifts of two representatives $\tilde{s},\tilde{t}$ of the (unique) cycle in $\hat{\mathcal{C}}^u_{\operatorname{h}(\delta)}$.
(There is precisely one cycle in $\hat{\mathcal{C}}^u_{\operatorname{h}(\delta)}$ by Proposition \ref{circle 1}.4.)

Indeed, by Lemma \ref{area to zero}, there are counter-clockwise ordered vectors
$$u_0 = u_{n+1}, u_1, u_2, \ldots, u_n \in \mathbb{Z}^2 \setminus 0$$
whose convex cone is $\mathbb{R}^2$, and cycles 
$$p_m = \delta p'_m \in \hat{\mathcal{C}}^{u_m}_{\operatorname{h}(\delta)},$$
such that for $0 \leq m \leq n$, we have
\begin{equation} \label{m+1 m}
\sigma \nmid \overbar{p_{m+1}p_m}.
\end{equation}

Assume to the contrary that for each $m$, $\delta^+$ is not contained in the region
$$\mathcal{R}_{p_{m+1}p_m,p_mp_{m+1}}.$$
See Figure \ref{purple fig2}.i. 
Since the convex cone of $u_0, \ldots, u_n$ is $\mathbb{R}^2$, there is some $m$ for which $p_m$ contains a leftmost subpath $\delta r$, and $p_{m+1}$ contains a rightmost subpath $r'$, such that $(\delta rr')^+$ is a cycle in $Q^+$.
See Figure \ref{uf}. 
Whence,
$$\overbar{\delta r r'} = \sigma^{\ell}$$
for some $\ell \geq 1$, by Lemmas \ref{here3'}.1 and \ref{here3}.
Consequently, $\sigma \mid \overbar{p_{m+1}p_m}$.
But this is a contradiction to (\ref{m+1 m}). 

Thus there is some $0 \leq m \leq n$ such that $\delta^+$ lies in $\mathcal{R}_{p_{m+1}p_m,p_mp_{m+1}}$; see Figure \ref{purple fig2}.ii.
Since $p_m$, $p_{m+1}$ are in $\hat{\mathcal{C}}$, their lifts $p_m^+$, $p_{m+1}^+$ do not have cyclic subpaths.
Thus, $\delta^+$ only meets $p_m^+$ and $p_{m+1}^+$ at $\operatorname{t}(p_m^+) = \operatorname{h}(\delta^+)$.
Consequently, $\operatorname{t}(\delta^+)$ lies in the interior of $\mathcal{R}_{p_{m+1}p_m,p_mp_{m+1}}$.
The claim then follows by setting
$$s = p_{m+1}p_m \ \ \ \text{ and } \ \ \ t = p_m p_{m+1}.$$

(ii) Let $s$ and $t$ be as in Claim (i).
In particular, $\overbar{s} = \overbar{t}$.
Assume to the contrary that there is a simple matching $x \in \mathcal{S}$ for which
$$x \nmid \overbar{s} = \overbar{t}.$$
Let $r^+$ be a path in $\mathcal{R}_{\tilde{s},\tilde{t}}$ from a vertex on the boundary of $\mathcal{R}_{\tilde{s},\tilde{t}}$ to $\operatorname{t}(\delta^+)$.
It suffices to suppose the tail of $r$ is a trivial subpath of $\tilde{s}$, in which case $\tilde{s}$ factors into paths
$$\tilde{s} = \tilde{s}_2e_{\operatorname{t}(r)}\tilde{s}_1.$$
See Figure \ref{purple fig2}.ii.
Then $(rs_1\delta)^+$ is a cycle in $Q^+$.
Whence
$$\overbar{rs_1\delta} = \sigma^{\ell}$$
for some $\ell \geq 1$, by Lemmas \ref{here3'}.1 and \ref{here3}.
Consequently,
$$x \mid \overbar{rs_1\delta} = \overbar{r}\overbar{s}_1\overbar{\delta}.$$
Furthermore, by assumption,
$$x \nmid \overbar{s}_1 \ \ \ \text{ and } \ \ \ x \nmid \overbar{\delta}.$$
Whence $x \mid \overbar{r}$.
Thus, since $r$ is arbitrary, $x$ divides the $\bar{\tau}$-image of each path in $\mathcal{R}_{\tilde{s},\tilde{t}}$ from the boundary of $\mathcal{R}_{\tilde{s},\tilde{t}}$ to $\operatorname{t}(\delta^+)$.
But $\operatorname{t}(\delta^+)$ lies in the interior of $\mathcal{R}_{\tilde{s},\tilde{t}}$.
Thus the vertex $\operatorname{t}(\delta)$ is a source in $Q \setminus x$.
Therefore $x$ is not simple, contrary to assumption.
It follows that $x \mid \overbar{s}$ for each $x \in \mathcal{S}$.

(iii) By Claim (ii), $\sigma \mid \overbar{s}$.
But this is a contradiction since $s$ is in $\hat{\mathcal{C}}$, by Lemmas \ref{here3'}.3 and \ref{here3}.
Therefore there is a cycle $p$ in $\hat{\mathcal{C}}_{\operatorname{t}(\delta)}$ such that $\delta$ is not the rightmost arrow subpath of any representative of $p$.
\end{proof}

\begin{figure}
$$\begin{array}{ccc}
\xy
0;/r.3pc/:
(-20,0)*{\cdot}="1";(20,0)*{\cdot}="2";(-11,8)*{}="3";(11,8)*{}="4";
(-11,-8)*{}="5";(11,-8)*{}="6";
(-29,0)*{\cdot}="8";
(29,0)*{\cdot}="9";{\ar^{\delta}"2";"9"};
{\ar@{-}@/^/@[blue]"1";"3"};{\ar@{-}^{p_mp_{m+1}}@[blue]"3";"4"};{\ar@/^/@[blue]"4";"2"};
{\ar@{-}@/_/@[red]"1";"5"};{\ar@{-}_{p_{m+1}p_m}@[red]"5";"6"};{\ar@/_/@[red]"6";"2"};
{\ar^{\delta}@[green]"8";"1"};
\endxy
& \ \ & 
\xy
0;/r.3pc/:
(-20,0)*{\cdot}="1";(20,0)*{\cdot}="2";(-11,8)*{}="3";(11,8)*{}="4";
(-11,-8)*{\cdot}="5";(11,-8)*{}="6";
(-11,0)*{\cdot}="8";
(11,0)*{\cdot}="9";{\ar_{\delta}"2";"9"};
{\ar@{-}@/^/@[blue]"1";"3"};{\ar@{-}^{t = p_mp_{m+1}}@[blue]"3";"4"};{\ar@/^/@[blue]"4";"2"};
{\ar@/_/^{s_1}@[red]"1";"5"};{\ar@{-}_{s = p_{m+1}p_m}@[red]"5";"6"};{\ar@/_/^{s_2}@[red]"6";"2"};
{\ar_{r}"5";"8"};{\ar_{\delta}@[green]"8";"1"};
\endxy
\\
(i) & & (ii) 
\end{array}$$
\caption{Setups for Lemma \ref{purple lemma}, drawn on the cover $Q^+$.
In both cases, $\delta^+$ is the green $\delta$ arrow on the left.
The black $\delta$ arrow on the right is the last arrow subpath of both $p_mp_{m+1}$ and $p_{m+1}p_m$.
In (i), $\delta^+$ is not contained in the region $\mathcal{R}_{p_{m+1}p_m,p_mp_{m+1}}$, whereas in (ii) $\delta^+$ is contained in this region.}
\label{purple fig2}
\end{figure}

\begin{figure}
$$\xy
0;/r.3pc/:
(-24,-3)*{}="1";(-6,-3)*{\cdot}="2";(0,-3)*{\cdot}="3";
(0,3)*{\cdot}="4";(6,3)*{\cdot}="5";(24,3)*{}="6";
(0,-21)*{}="7";(0,21)*{}="8";
(24,21)*{}="9";(-24,-21)*{}="10";
{\ar@[brown]^{p_{m-1}}"7";"3"};{\ar@[brown]^{p_{m-1}}"4";"8"};
{\ar@[blue]^{p_m}"9";"5"};{\ar@[blue]^{p_m}"9";"5"};{\ar@[blue]^r"5";"3"};{\ar@[blue]"4";"2"};{\ar@[blue]^{p_m}"2";"10"};{\ar@[red]^{p_{m+1}}"1";"2"};{\ar@[red]^{r'}"4";"5"};{\ar@[red]_{p_{m+1}}"5";"6"};
{\ar@[green]|-{\delta}"3";"4"};{\ar@[red]"2";"3"};
\endxy$$
\caption{Setup for Lemma \ref{purple lemma}, Claim (i).}
\label{uf}
\end{figure}

\begin{Remark} \rm{
There are dimer algebras that have an arrow $\delta \in Q_1$ which is a rightmost arrow subpath of each cycle in $\hat{\mathcal{C}}_{\operatorname{t}(\delta)}$; see Figure \ref{AKZfig1}.
Furthermore, if $A$ has center $Z$, admits a cyclic contraction, and $\delta \in Q_1^{\mathcal{S}}$, then $\delta$ is a rightmost arrow subpath of each cycle $p \in Ze_{\operatorname{t}(\delta)}$ for which $\sigma \nmid \overbar{p}$, by \cite[Lemma 2.4]{B2}.
} \end{Remark}

\begin{figure}
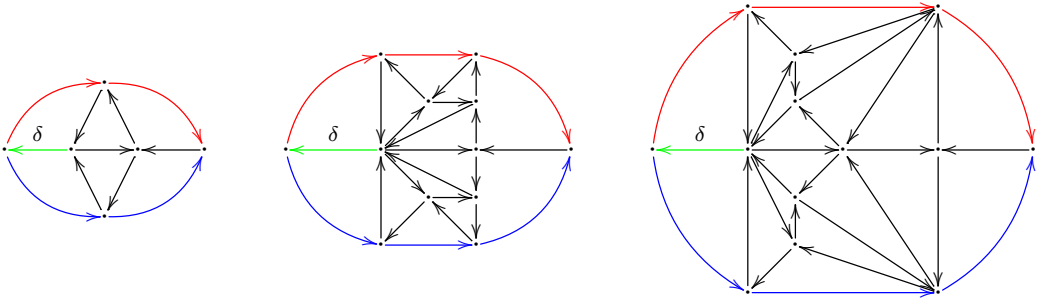

$$\xy 0;/r.3pc/:
(-10.5,0)*{\cdot}="1";(-3.5,0)*{\cdot}="2";(3.5,0)*{\cdot}="3";(10.5,0)*{\cdot}="4";
(0,-7)*{\cdot}="5";(0,7)*{\cdot}="6";
{\ar@[green]_{\delta}"2";"1"};{\ar"2";"3"};{\ar"4";"3"};{\ar"6";"2"};{\ar"5";"2"};
{\ar"3";"5"};{\ar"3";"6"};
{\ar@[blue]@/_.7pc/"1";"5"};{\ar@[red]@/^.7pc/"1";"6"};
{\ar@[blue]@/_.7pc/"5";"4"};{\ar@[red]@/^.7pc/"6";"4"};
\endxy
\ \ \ \ \ \ \
\xy 0;/r.3pc/:
(-15,0)*{\cdot}="1";(-5,0)*{\cdot}="2";(5,0)*{\cdot}="3";(15,0)*{\cdot}="4";
(-5,-10)*{\cdot}="5";(5,-10)*{\cdot}="6";
(0,-5)*{\cdot}="7";(0,5)*{\cdot}="8";
(-5,10)*{\cdot}="9";(5,10)*{\cdot}="10";
(5,-5)*{\cdot}="11";(5,5)*{\cdot}="12";
{\ar@[blue]@/_.7pc/"1";"5"};{\ar@[red]@/^.7pc/"1";"9"};
{\ar@[green]_{\delta}"2";"1"};
{\ar@[blue]"5";"6"};{\ar@[red]"9";"10"};
{\ar@[blue]@/_.7pc/"6";"4"};{\ar@[red]@/^.7pc/"10";"4"};
{\ar"2";"3"};{\ar"4";"3"};
{\ar"9";"2"};{\ar"5";"2"};{\ar"8";"9"};{\ar"7";"5"};{\ar"2";"8"};{\ar"2";"7"};
{\ar"10";"8"};{\ar"8";"12"};{\ar"12";"2"};
{\ar"6";"7"};{\ar"7";"11"};{\ar"11";"2"};
{\ar"3";"12"};{\ar"12";"10"};{\ar"3";"11"};{\ar"11";"6"};
\endxy
\ \ \ \ \ \ \
\xy 0;/r.3pc/:
(-20,0)*{\cdot}="1";(-10,-15)*{\cdot}="2";(10,-15)*{\cdot}="3";(20,0)*{\cdot}="4";(10,15)*{\cdot}="5";(-10,15)*{\cdot}="6";
(-10,0)*{\cdot}="7";(0,0)*{\cdot}="8";(10,0)*{\cdot}="9";
(-5,-10)*{\cdot}="10";(-5,10)*{\cdot}="11";
(-5,-5)*{\cdot}="12";(-5,5)*{\cdot}="13";
{\ar@[green]_{\delta}"7";"1"};
{\ar@/_.7pc/@[blue]"1";"2"};{\ar@[blue]"2";"3"};{\ar@/_.7pc/@[blue]"3";"4"};
{\ar@/^.7pc/@[red]"1";"6"};{\ar@[red]"6";"5"};{\ar@[red]@/^.7pc/"5";"4"};
{\ar"6";"7"};{\ar"2";"7"};
{\ar"10";"2"};{\ar"11";"6"};
{\ar"5";"11"};{\ar"3";"10"};
{\ar"10";"12"};{\ar"11";"13"};
{\ar"13";"7"};{\ar"12";"7"};
{\ar"8";"13"};{\ar"8";"12"};
{\ar"5";"8"};{\ar"3";"8"};
{\ar"9";"5"};{\ar"9";"3"};
{\ar"4";"9"};{\ar"8";"9"};{\ar"7";"8"};
{\ar"7";"10"};{\ar"7";"11"};
{\ar"12";"3"};{\ar"13";"5"};
\endxy$$
\caption{In each example, the arrow $\delta$ is a rightmost arrow subpath of each path from $\operatorname{t}(\delta)$ to a vertex on the boundary of the region $\mathcal{R}_{p,q}$ bounded by the paths $p^+$ and $q^+$, drawn in red and blue respectively.
The red and blue arrows are nontrivial paths in $Q^+$, and the black arrows are arrows in $Q^+$.}
\label{AKZfig1}
\end{figure}

The following was shown in \cite[Corollary 11.4]{IU} for consistent dimer quivers; here we give a new and independent proof using columns and pillars.

\begin{Theorem} \label{AKZ}
Suppose (i) $A$ is cancellative, or (ii) $\hat{\mathcal{C}}_i^u \not = \emptyset$ for each $u \in \mathbb{Z}^2$ and $i \in Q_0$.
Then $Q_1^{\mathcal{S}} = \emptyset$, that is, each arrow annihilates a simple $A$-module of dimension $1^{Q_0}$.
\end{Theorem}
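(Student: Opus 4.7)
The plan is an argument by contradiction. First, case (i) reduces to case (ii): the contrapositive of Proposition \ref{here} states that if $A$ is cancellative then $\hat{\mathcal{C}}^u_i \neq \emptyset$ for every $u \in \mathbb{Z}^2 \setminus 0$ and $i \in Q_0$. Under hypothesis (ii), Lemma \ref{figure3lemma} supplies a simple matching, so $\mathcal{S} \neq \emptyset$ and $\sigma \neq 1$; Lemma \ref{here3} then guarantees that $A$ has no non-cancellative pair involving a vertex, which is exactly the standing hypothesis invoked in Lemma \ref{here3'}. Assume for contradiction that $\delta \in Q_1^{\mathcal{S}}$. Then $\overline{\delta} = 1$ (empty product), so $\sigma \nmid \overline{\delta}$; by Lemma \ref{here3'}.4, $\delta$ is a subpath of some cycle in $\hat{\mathcal{C}}$, and a cyclic permutation of that cycle yields $q_0 \in \hat{\mathcal{C}}^{u_0}_{\operatorname{t}(\delta)}$ with $\delta$ as its rightmost arrow subpath, for some $u_0 \neq 0$ (using that $\hat{\mathcal{C}}^0 = \emptyset$).

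The plan is then to establish the universal statement that every cycle $p \in \hat{\mathcal{C}}_{\operatorname{t}(\delta)}$ admits a representative whose rightmost arrow subpath is $\delta$. Once that is in hand, Lemma \ref{purple lemma} is directly contradicted and we conclude $Q_1^{\mathcal{S}} = \emptyset$. To produce such a representative, fix $u \in \mathbb{Z}^2 \setminus 0$ and let $p$ be the unique cycle in $\hat{\mathcal{C}}^u_{\operatorname{t}(\delta)}$ guaranteed by Proposition \ref{circle 1}.4. By Proposition \ref{circle 1}.1 there exists a simple matching $D$ with $x_D \nmid \overline{p}$, so $p$ is supported on $Q \setminus D$. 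The heart of the argument is then to construct a path $r$ in $Q \setminus D$ from $\operatorname{h}(\delta)$ to $\operatorname{t}(\delta)$ such that $r\delta$ is a cycle at $\operatorname{t}(\delta)$ of winding exactly $u$. Because $r$ avoids $D$ and $\overline{\delta} = 1$, one immediately obtains $x_D \nmid \overline{r\delta}$ and therefore $\sigma \nmid \overline{r\delta}$; the contrapositive of Lemma \ref{here3'}.3 places $r\delta$ in $\hat{\mathcal{C}}^u_{\operatorname{t}(\delta)}$, and uniqueness (Proposition \ref{circle 1}.4) forces $r\delta = p$ modulo $I$. Hence $r\delta$ is the desired $\delta$-rightmost representative of $p$.

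The main obstacle is constructing the path $r$ of prescribed winding inside $Q \setminus D$. Strong connectivity of $Q \setminus D$ (which is immediate since $D$ is simple) produces some path from $\operatorname{h}(\delta)$ to $\operatorname{t}(\delta)$, but a priori the lattice of realizable windings is only a subgroup of $\mathbb{Z}^2$. To adjust the winding freely I would use the auxiliary cycle $q_0$: concatenation with large powers $q_0^m$ shifts the winding by multiples of $u_0$ while preserving the $\delta$-rightmost property, and Lemma \ref{area to zero} together with Proposition \ref{circle 1}.3 ensure that these concatenations stay inside $\hat{\mathcal{C}}$. Iterating with a second cycle of winding $\mathbb{Z}$-independent from $u_0$, obtained by applying Lemma \ref{here3'}.4 at another vertex and cyclically permuting, sweeps out the remaining winding classes. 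Once $r$ can be produced for every $u$, the universal statement holds and the contradiction with Lemma \ref{purple lemma} finishes the proof, yielding $Q_1^{\mathcal{S}} = \emptyset$.
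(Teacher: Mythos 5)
Your reduction of (i) to (ii), the use of Lemma \ref{here3} to justify invoking Lemma \ref{here3'}, and the observation that $\overbar{\delta}=1$ forces $\delta$ to be the rightmost arrow of some cycle $q_0\in\hat{\mathcal{C}}^{u_0}_{\operatorname{t}(\delta)}$ are all sound. The strategy of contradicting Lemma \ref{purple lemma} is also the right target (it is exactly what the paper does). But the heart of your argument --- producing, for \emph{every} $u\in\mathbb{Z}^2\setminus 0$, a cycle $r\delta\in\hat{\mathcal{C}}^u_{\operatorname{t}(\delta)}$ with $\delta$ rightmost --- is not established, and the tools you cite cannot deliver it. First, concatenation with nonnegative powers $q_0^m$ and $q_1^{m'}$ of two cycles of independent windings $u_0,u_1$ only realizes the sub-semigroup $\mathbb{N}u_0+\mathbb{N}u_1$, a cone in $\mathbb{Z}^2$; you cannot reach windings outside that cone, yet hypothesis (ii) forces you to handle all of $\mathbb{Z}^2\setminus 0$. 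Second, $\hat{\mathcal{C}}$ is not closed under concatenation of cycles with different windings: Proposition \ref{circle 1}.3 only covers cycles assembled from subpaths of cycles lying in a \emph{single} $\hat{\mathcal{C}}^u$, and Lemma \ref{area to zero} applies only to the special winding pairs $(\varepsilon_1,0)$ and $(n\varepsilon_1,\varepsilon_2)$ with $n\gg 1$. Indeed a product of two cycles in $\hat{\mathcal{C}}$ with total winding $0$ has $\bar{\tau}$-image a positive power of $\sigma$ (Lemma \ref{here3'}.1), so such products generically leave $\hat{\mathcal{C}}$. Third, your divisibility argument needs the entire path $r$ to avoid the specific simple matching $D$ chosen so that $x_D\nmid\overbar{p}_u$; the auxiliary cycles $q_0,q_1$ used to adjust the winding carry no such guarantee, so $x_D$ may well divide $\overbar{r\delta}$ after the adjustment.

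For comparison, the paper avoids this global winding problem entirely: starting from the cycle $p\in\hat{\mathcal{C}}_{\operatorname{t}(\delta)}$ supplied by Lemma \ref{purple lemma} whose rightmost arrow is \emph{not} $\delta$, it takes a cycle $q\in\hat{\mathcal{C}}^u_{\operatorname{h}(\delta)}$ of the \emph{same} winding, chooses representatives bounding a region with no interior vertices (Lemma \ref{figure2lemma}), and then runs the local column/pillar analysis of Lemmas \ref{columns and pillars} and \ref{figure3lemma} to force either $\delta$ into a simple matching or $\delta$ to be rightmost in $p$ --- a contradiction either way. If you want to salvage your route, you would need a genuinely new argument that every winding class is realized by a $\delta$-rightmost cycle in $\hat{\mathcal{C}}$ supported off a prescribed simple matching; as written, that step is a gap.
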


\begin{proof}
Recall that assumption (i) implies assumption (ii), by Proposition \ref{here}.
So suppose (ii) holds, and assume to the contrary that there is an arrow $\delta$ in $Q_1^{\mathcal{S}}$.

By Lemma \ref{purple lemma}, there is a cycle $p \in \hat{\mathcal{C}}_{\operatorname{t}(\delta)}$ whose rightmost arrow subpath is not $\delta$ (modulo $I$).
Let $u \in \mathbb{Z}^2$ be such that $p \in \mathcal{C}^u$.
By assumption, there is a cycle $q$ in $\hat{\mathcal{C}}^u_{\operatorname{h}(\delta)}$.
By Lemma \ref{figure2lemma}, we may choose representatives $\tilde{p}$, $\tilde{q}$ of $p$, $q$ such that $\mathcal{R}_{\tilde{p},\tilde{q}}$ contains no vertices in its interior.
We thus have one of the three cases given in Figure \ref{AKZfig3}.

First suppose $\tilde{p}^+$ and $\tilde{q}^+$ do not intersect (that is, do not share a common vertex), as shown in case (i).
Then $\tilde{p}^+$ and $\tilde{q}^+$ bound a column.
By Lemma \ref{figure3lemma}, the brown arrows belong to a simple matching $x \in \mathcal{S}$.
In particular, $\delta$ is in $x$, contrary to assumption.

So suppose $\tilde{p}^+$ and $\tilde{q}^+$ intersect, as shown in cases (ii) and (iii).
Then $\tilde{p}^+$ and $\tilde{q}^+$ bound a union of pillars.
Again by Lemma \ref{figure3lemma}, the brown arrows belong to a simple matching $x \in \mathcal{S}$.
In particular, in case (ii) $\delta$ is in $x$, contrary to assumption.
Therefore case (iii) holds.
But then $\delta$ is a rightmost arrow subpath of $p$ (modulo $I$), contrary to our choice of $p$.
\end{proof}

In \cite[Theorem 1.1]{B3} we show that the converse of Theorem \ref{AKZ} also holds: $A$ is cancellative if and only if each arrow of $Q$ is contained in a simple matching.

\begin{figure}
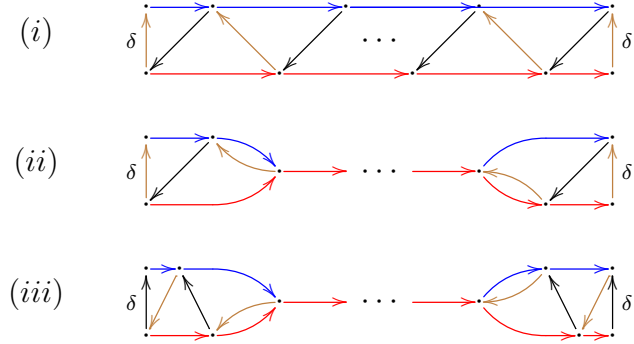

$$\begin{array}{ccc}
(i) & &
\xy 0;/r.3pc/:
(-24.5,-3.5)*{\cdot}="1";
(-10.5,-3.5)*{\cdot}="3";(3.5,-3.5)*{\cdot}="5";(24.5,-3.5)*{\cdot}="6";
(-24.5,3.5)*{\cdot}="7";(-17.5,3.5)*{\cdot}="8";
(-3.5,3.5)*{\cdot}="9";
(10.5,3.5)*{\cdot}="9'";(24.5,3.5)*{\cdot}="11";
(17.5,-3.5)*{\cdot}="5'";
(0,0)*{\cdots}="";
{\ar@[blue]"7";"8"};{\ar@[blue]"8";"9"};{\ar@[blue]"9";"11"};
{\ar@[red]"1";"3"};{\ar@[red]"3";"5"};{\ar@[red]"5'";"6"};
{\ar@[brown]_{\delta}"6";"11"};{\ar"11";"5'"};{\ar@[brown]"5'";"9'"};{\ar"9";"3"};{\ar@[brown]"3";"8"};{\ar"8";"1"};{\ar@[brown]^{\delta}"1";"7"};
{\ar@[blue]"9";"9'"};
{\ar"9'";"5"};{\ar@[red]"5";"5'"};
\endxy
\\
\\
(ii) &  &
\xy 0;/r.3pc/:
(-24.5,-3.5)*{\cdot}="1";
(-10.5,0)*{\cdot}="3";(17.5,-3.5)*{\cdot}="5";(24.5,-3.5)*{\cdot}="6";
(-24.5,3.5)*{\cdot}="7";(-17.5,3.5)*{\cdot}="8";(10.5,0)*{\cdot}="9";(24.5,3.5)*{\cdot}="11";
(-17.5,-3.5)*{}="2";(17.5,3.5)*{}="10";
(-3.5,0)*{}="14";(3.5,0)*{}="15";(0,0)*{\cdots}="";
{\ar@[red]@{->}"3";"14"};{\ar@[red]@{->}"15";"9"};
{\ar@[blue]"7";"8"};{\ar@[blue]@/^.3pc/"8";"3"};{\ar@[blue]@{-}@/^.3pc/"9";"10"};{\ar@[blue]"10";"11"};
{\ar@[red]@{-}"1";"2"};{\ar@[red]@/_.3pc/"2";"3"};{\ar@[red]@/_.3pc/"9";"5"};{\ar@[red]"5";"6"};
{\ar@[brown]_{\delta}"6";"11"};{\ar"11";"5"};{\ar@[brown]@/_.3pc/"5";"9"};
{\ar@[brown]@/^.3pc/"3";"8"};{\ar"8";"1"};{\ar@[brown]^{\delta}"1";"7"};
\endxy
\\
\\
(iii) & &
\xy 0;/r.3pc/:
(-24.5,-3.5)*{\cdot}="1";
(-10.5,0)*{\cdot}="3";(17.5,-3.5)*{}="5";(24.5,-3.5)*{\cdot}="6";
(-24.5,3.5)*{\cdot}="7";(-17.5,3.5)*{}="8";(10.5,0)*{\cdot}="9";(24.5,3.5)*{\cdot}="11";
(-17.5,-3.5)*{\cdot}="2";(17.5,3.5)*{\cdot}="10";
(-21,3.5)*{\cdot}="12";(21,-3.5)*{\cdot}="13";
(-3.5,0)*{}="14";(3.5,0)*{}="15";(0,0)*{\cdots}="";
{\ar@[red]@{->}"3";"14"};{\ar@[red]@{->}"15";"9"};
{\ar@[blue]"7";"12"};{\ar@[blue]@{-}"12";"8"};{\ar@[blue]@/^.3pc/"8";"3"};{\ar@[blue]@/^.3pc/"9";"10"};{\ar@[blue]"10";"11"};
{\ar@[red]"1";"2"};{\ar@[red]@/_.3pc/"2";"3"};{\ar@[red]@{-}@/_.3pc/"9";"5"};{\ar@[red]"5";"13"};{\ar@[red]"13";"6"};
{\ar_{\delta}"6";"11"};{\ar@[brown]"11";"13"};{\ar"13";"10"};{\ar@[brown]@/^.3pc/"10";"9"};
{\ar@[brown]@/_.3pc/"3";"2"};{\ar"2";"12"};{\ar@[brown]"12";"1"};{\ar^{\delta}"1";"7"};
\endxy
\end{array}$$
\caption{Setup for Theorem \ref{AKZ}.
The red and blue paths are the (lifts of the) cycles $p$ and $q$, respectively.
The red and blue arrows are paths in $Q^+$, and the black and brown arrows are arrows in $Q^+$.
In cases (i) and (ii), $\delta$ belongs to a simple matching, contrary to assumption.
In case (iii), $\delta$ is a rightmost arrow subpath of $p$ (modulo $I$), again contrary to assumption.}
\label{AKZfig3}
\end{figure}

\begin{Lemma} \label{easy injective}
Suppose $A$ is cancellative.
Let $p,q \in e_jAe_i$ be paths satisfying
$$\operatorname{t}(p^+) = \operatorname{t}(q^+) \ \ \text{ and } \ \ \operatorname{h}(p^+) = \operatorname{h}(q^+).$$
If $\overbar{p} = \overbar{q}$, then $p = q$.
\end{Lemma}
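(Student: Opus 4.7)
The plan is to combine Lemma \ref{here2}.1 (which makes $p$ and $q$ equal after multiplying by powers of the unit cycle $\sigma_i$) with the hypothesis $\overbar{p}=\overbar{q}$, and then strip away the $\sigma_i$ factors using cancellativity of $A$.

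First, since $\operatorname{t}(p^+)=\operatorname{t}(q^+)$ and $\operatorname{h}(p^+)=\operatorname{h}(q^+)$, Lemma \ref{here2}.1 supplies integers $m,n\geq 0$ with
$$p\,\sigma_i^m \;=\; q\,\sigma_i^n.$$
Applying the map $\bar{\tau}$, which is an algebra homomorphism by Lemma \ref{tau'A'}, and using that $\bar{\tau}(\sigma_i)=\sigma$, one obtains $\overbar{p}\,\sigma^m=\overbar{q}\,\sigma^n$ inside the integral domain $B=k[x_D\mid D\in\mathcal{S}]$. Since the hypothesis gives $\overbar{p}=\overbar{q}$ (a nonzero monomial, as $p$ and $q$ are nonzero paths), one can cancel in $B$ to conclude $\sigma^m=\sigma^n$.

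Next, because $A$ is cancellative, Lemma \ref{at least one} guarantees $\mathcal{S}\neq\emptyset$, so the product $\sigma=\prod_{D\in\mathcal{S}}x_D$ is a nontrivial monomial and in particular $\sigma\neq 1$ in $B$. Unique factorization in $B$ then forces $m=n$, whence $p\sigma_i^m=q\sigma_i^m$. The common value is nonzero, since its $\bar{\tau}$-image equals the nonzero monomial $\overbar{p}\,\sigma^m$. Therefore if $p\neq q$ the pair $(p,q)$ together with the path $r=\sigma_i^m$ would constitute a non-cancellative pair, contradicting the cancellativity of $A$. Hence $p=q$, as required.

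I do not anticipate a genuine obstacle: the lemma is essentially a direct corollary of Lemma \ref{here2}.1 together with cancellativity. The only subtle point is making sure the equation $\sigma^m=\sigma^n$ actually forces $m=n$, which is where $\mathcal{S}\neq\emptyset$ (i.e.\ $\sigma\neq 1$) is invoked via Lemma \ref{at least one}; this is also the only place where cancellativity enters prior to the final cancellation step.
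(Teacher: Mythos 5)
Your proof is correct and follows essentially the same route as the paper: the paper's own (terse) proof invokes Lemma \ref{at least one} to get $\sigma \neq 1$ and then reruns the argument of Lemma \ref{here2}.4 with $\bar{\tau}$ in place of $\bar{\eta}$, which is exactly the chain $p\sigma_i^m = q\sigma_i^n \Rightarrow \sigma^m = \sigma^n \Rightarrow m = n \Rightarrow p\sigma_i^m = q\sigma_i^m \neq 0$ that you spell out, followed by cancellativity to strip off $\sigma_i^m$.
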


\begin{proof}
Since $A$ is cancellative, $Q$ has at least one simple matching by Lemma \ref{at least one}.
In particular, $\sigma \not = 1$.
Thus we may apply the proof of Lemma \ref{here2}.3, with $\bar{\tau}$ in place of $\bar{\eta}$.
\end{proof}

\begin{Lemma} \label{generated by}
Suppose $A$ is cancellative.
For each $i \in Q_0$, the corner ring $e_iAe_i$ is generated by $\sigma_i$ and $\hat{\mathcal{C}}_i$.
\end{Lemma}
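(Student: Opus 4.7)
The plan is to reduce to showing that every cycle $p \in \mathcal{C}_i$ can be rewritten as $\sigma_i^m q$ for some $m \geq 0$ and some $q \in \hat{\mathcal{C}}_i \cup \{e_i\}$; since $e_iAe_i$ is $k$-spanned by $\{e_i\} \cup \mathcal{C}_i$, this suffices. I will split into three cases according to where $p$ sits. If $p \in \mathcal{C}^0_i$, then Lemma \ref{here3'}.2 gives $p = \sigma_i^m$ outright. If $p \in \hat{\mathcal{C}}_i$, there is nothing to do. The remaining case is $p \in \mathcal{C}^u_i \setminus \hat{\mathcal{C}}$ for some $u \in \mathbb{Z}^2 \setminus 0$, which is where the real work lies.

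In that case, I will use the factorization from display (\ref{C0}), writing a representative as $\tilde{p} = \tilde{p}_3 \tilde{p}_2 \tilde{p}_1$ with $p_2 \in \hat{\mathcal{C}}^u_j$ for $j := \operatorname{t}(p_2)$ and $p_1 p_3 \in \mathcal{C}^0_j$. Translating $\tilde{p}_3$ by $-u$ shows the resulting lift of $p_3 p_1$ closes up at $i^+$, so $p_3 p_1 \in \mathcal{C}^0_i$, and Lemma \ref{here3'}.2 applied at $i$ gives $p_3 p_1 = \sigma_i^m$ for some $m \geq 1$ (the product is nontrivial since otherwise $p = p_2 \in \hat{\mathcal{C}}$). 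It then remains to establish the slide identity $p_2 p_1 = p_1 p'$, where $p'$ is the unique element of $\hat{\mathcal{C}}^u_i$; granting this, substitution gives $p = p_3 p_2 p_1 = p_3 p_1 p' = \sigma_i^m p'$ as required.

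For the slide identity I will write $p_1 = c_\ell \cdots c_1$ as a concatenation of arrows and iterate Lemma \ref{longlist3}.1. Proposition \ref{here} (via cancellativity) ensures $\hat{\mathcal{C}}^u_k \neq \emptyset$ at every intermediate vertex $k = \operatorname{h}(c_r)$, and Proposition \ref{circle 1}.4 forces that cycle to be unique. Starting from $p' \in \hat{\mathcal{C}}^u_i$, each application of Lemma \ref{longlist3}.1 produces a unique $q_r \in \hat{\mathcal{C}}^u_{\operatorname{h}(c_r)}$ satisfying $c_r q_{r-1} = q_r c_r$; uniqueness then forces $q_\ell = p_2$, and composing the $\ell$ equalities yields $p_2 p_1 = p_1 p'$. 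The main obstacle is precisely this slide: commutativity of $e_iAe_i$ (Lemma \ref{longlist3}.2) does not apply directly because $p_1$ is not a cycle, so one must reduce to the single-arrow swap of Lemma \ref{longlist3}.1 and marshal the existence/uniqueness results to carry it through along the full path $p_1$.
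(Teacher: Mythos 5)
Your proof is correct, but it takes a genuinely different route from the paper's. For $u \neq 0$ the paper argues in one stroke at the level of $\bar{\tau}$-images: it picks any $q \in \hat{\mathcal{C}}^u_i$ (nonempty by Proposition \ref{here}), gets $\overbar{p} = \overbar{q}\sigma^m$ with $m \in \mathbb{Z}$ from Lemma \ref{longlist}, forces $m \geq 0$ from Proposition \ref{circle 1}.1 (if $m<0$ then $\sigma$ would divide $\overbar{q}$), and transfers the resulting equality $\overbar{p} = \overbar{q\sigma_i^m}$ back to $p = q\sigma_i^m$ via the injectivity statement Lemma \ref{easy injective}. You instead exhibit the factorization combinatorially: you invoke the decomposition (\ref{C0}) of $p$ into $p_3p_2p_1$ with $p_2 \in \hat{\mathcal{C}}^u$, identify $p_3p_1$ as a positive power of $\sigma_i$ via Lemma \ref{here3'}.2, and slide the unique element of $\hat{\mathcal{C}}^u$ along $p_1$ one arrow at a time using Lemma \ref{longlist3}.1 together with existence (Proposition \ref{here}) and uniqueness (Proposition \ref{circle 1}.4) of these cycles at each intermediate vertex; the slide checks out as written, and is the same mechanism the paper uses later to show $\sum_i c_{ui}$ is central in Proposition \ref{i=j}.1. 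What the paper's route buys is brevity and independence from the factorization (\ref{C0}): it needs only the existence of \emph{some} element of $\hat{\mathcal{C}}^u_i$, not a factorization of $p$ itself. What your route buys is that it never passes through $B$ and back --- it avoids Lemma \ref{longlist} and Lemma \ref{easy injective} entirely and produces the generator $p'$ and the exponent $m$ directly from the geometry of the lift --- at the cost of leaning on the unproved assertion (\ref{C0}) for the specific cycle $p$, which is a strictly stronger input than the existence statement the paper uses.
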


\begin{proof}
Since $I$ is generated by binomials, $e_iAe_i$ is generated by $\mathcal{C}_i$.
It thus suffices to show that $\mathcal{C}_i$ is generated by $\sigma_i$ and $\hat{\mathcal{C}}_i$.

Let $u \in \mathbb{Z}^2$ and $p \in \mathcal{C}_i^u$.
If $u = 0$, then $p = \sigma_i^m$ for some $m \geq 0$ by Lemma \ref{here3'}.2.
So suppose $u \not = 0$.
Then there is a cycle $q$ in $\hat{\mathcal{C}}^u_i$ by Proposition \ref{here}.
In particular, $\overbar{p} = \overbar{q}\sigma^m$ for some $m \in \mathbb{Z}$ by Lemma \ref{longlist}.
Furthermore, $m \geq 0$ by Proposition \ref{circle 1}.1.
Therefore $p = q \sigma_i^m$ by Lemma \ref{easy injective}.
\end{proof}

It is well known that if $A$ is cancellative, then $A$ is a 3-Calabi-Yau algebra \cite{D,MR}.
In particular, the center $Z$ of $A$ is noetherian, and $A$ is a finitely generated $Z$-module.
In the following, we give independent proofs of these facts.

\begin{Proposition}  \label{i=j}
Suppose $A$ is cancellative, and let $i,j \in Q_0$.
Then
\begin{enumerate}
 \item $e_iAe_i = Ze_i \cong Z$.
 \item $\bar{\tau}\left(e_iAe_i \right) = \bar{\tau}\left(e_jAe_j\right)$.
 \item $A$ is a finitely generated $Z$-module, and $Z$ is a finitely generated $k$-algebra.
\end{enumerate}
\end{Proposition}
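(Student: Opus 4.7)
My plan is to prove the three statements in sequence, with (2) falling out immediately from (1), and (3) reducing to a monoid-theoretic analysis via the embedding $\bar{\tau}$.

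For (1), the strategy is to construct central lifts of the generators of each corner ring $e_iAe_i$. By Proposition \ref{here} together with Proposition \ref{circle 1}.4, for every $u \in \mathbb{Z}^2 \setminus 0$ and every $i \in Q_0$ there is a unique cycle $p_i^u \in \hat{\mathcal{C}}_i^u$. I set
$$z^u := \sum_{i \in Q_0} p_i^u \quad (u \neq 0), \qquad z^0 := \sum_{i \in Q_0} \sigma_i.$$
Lemma \ref{sigma} gives $z^0 \in Z$, and Lemma \ref{longlist3}.1---which reads $a\, p_{\operatorname{t}(a)}^u = p_{\operatorname{h}(a)}^u\, a$ for every arrow $a \in Q_1$---immediately yields $a\, z^u = z^u\, a$, so $z^u \in Z$. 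By Lemma \ref{generated by}, the generators $\sigma_i = z^0 e_i$ and $\{p_i^u = z^u e_i\}_{u \neq 0}$ of $e_iAe_i$ all lie in $Ze_i$, whence $e_iAe_i \subseteq Ze_i$; the reverse containment is clear. To promote this to the isomorphism $Z \cong Ze_i$, the surjection $z \mapsto ze_i$ is injective by cancellativity: if $ze_i = 0$, then for any $j \in Q_0$ one picks a nonzero $r \in e_jAe_i$ (which exists by strong connectivity of $Q$) and uses centrality to write $(ze_j)\, r = r\, (ze_i) = 0$, forcing $ze_j = 0$, and hence $z = \sum_j ze_j = 0$.

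Statement (2) follows immediately: by Lemma \ref{generated by} and (1), $\bar{\tau}(e_iAe_i)$ is the $k$-subalgebra of $B$ generated by $\sigma = \bar{\tau}(\sigma_i)$ and $\{\bar{\tau}(p_i^u)\}_{u \neq 0}$, and Proposition \ref{circle 1}.2 tells us $\bar{\tau}(p_i^u)$ depends only on $u$, so this subring is independent of $i$.

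For (3), set $S := \bar{\tau}(e_iAe_i) \subseteq B = k[x_D : D \in \mathcal{S}]$, well-defined by (2). Each generator of $S$ is a monomial in the $x_D$, so $S = k[N]$ for a submonoid $N \subseteq \mathbb{Z}_{\geq 0}^{|\mathcal{S}|}$. Applying Lemma \ref{here3'}.1 to the cycle $p_i^u p_i^{-u} \in \mathcal{C}^0$ gives $\bar{\tau}(p_i^u)\, \bar{\tau}(p_i^{-u}) = \sigma^{m_u}$ with $m_u \geq 1$, so the $\mathbb{Z}$-span $L$ of $N$ is a sublattice of rank at most $3$ in $\mathbb{Z}^{|\mathcal{S}|}$, generated by the exponent of $\sigma$ and the exponent vectors of $\bar{\tau}(p_i^{e_1}), \bar{\tau}(p_i^{e_2})$ for a fixed $\mathbb{Z}$-basis $e_1, e_2$ of $\mathbb{Z}^2$. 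Once one establishes the saturation identity $N = L \cap \mathbb{Z}_{\geq 0}^{|\mathcal{S}|}$, Gordan's lemma yields that $N$ is a finitely generated affine semigroup, so $S \cong Z$ is finitely generated over $k$. The same lattice/monoid analysis applied to $\bar{\tau}(e_jAe_i) \subseteq B$ (injective by Lemma \ref{easy injective}) presents it as a union of finitely many $N$-cosets in the positive orthant, hence a finitely generated $S$-module; summing over $i, j$ shows $A$ is finitely generated over $Z$.

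The hard part I anticipate is the saturation identity $N = L \cap \mathbb{Z}_{\geq 0}^{|\mathcal{S}|}$: showing that every nonnegative integer combination of the basic exponent vectors is realized as the $\bar{\tau}$-image of an honest cycle at $i$, rather than merely a formal lattice combination. This should rest on a covering-space argument on $Q^+$ using the uniqueness in $\hat{\mathcal{C}}_i^u$ (Proposition \ref{circle 1}.4) and the commutativity of corner rings (Lemma \ref{longlist3}.2); it is the only step of the proof that does not reduce directly to the lemmas already developed.
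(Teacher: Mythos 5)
Your treatment of parts (1) and (2) is essentially the paper's own: the central elements $\sum_{i}c_{ui}$ built from the unique cycles in $\hat{\mathcal{C}}_i^u$ (Propositions \ref{here} and \ref{circle 1}.4 plus Lemma \ref{longlist3}.1), the containment $e_iAe_i \subseteq Ze_i$ via Lemma \ref{generated by}, cancellativity for the injectivity of $z \mapsto ze_i$, and Proposition \ref{circle 1}.2 for (2) are exactly the steps the paper takes.

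The problem is part (3), and you have flagged it yourself: the saturation identity $N = L \cap \mathbb{Z}_{\geq 0}^{|\mathcal{S}|}$ is a genuine gap, not a deferred routine verification. It asserts that every lattice point of $L$ in the positive orthant is realized as the exponent vector of an actual cycle at $i$; this is in substance the normality of $\operatorname{Spec}Z$ as a toric threefold, a nontrivial theorem about consistent dimer models. None of the lemmas you invoke (Proposition \ref{circle 1}.4, Lemma \ref{longlist3}.2, Lemma \ref{easy injective}) can deliver it, because they only constrain the $\bar{\tau}$-images of cycles that are already given — they never produce a new cycle with a prescribed image. Without saturation, Gordan's lemma does not apply, and both finite generation of $Z$ and the coset argument for module-finiteness of $A$ collapse. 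The paper takes a different and much lighter route, in the opposite logical order: it first shows (citing the second paragraph of the proof of Theorem 2.11 in \cite{B6}, together with part (1)) that $A$ is generated as a $Z$-module by the finitely many paths of length at most $|Q_0|$ — a local statement about absorbing unit cycles into the center, with no toric geometry — and then deduces that $Z$ is a finitely generated $k$-algebra from the Artin--Tate lemma \cite[1.1.3]{McR}, since $A$ is a finitely generated $k$-algebra and a finite $Z$-module. If you want to salvage your approach you would need to prove the saturation identity from scratch, which is harder than the proposition itself; otherwise replace part (3) with the bounded-path-length plus Artin--Tate argument.
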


\begin{proof}
(1) For each $i \in Q_0$ and $u \in \mathbb{Z}^2 \setminus 0$, there exists a unique cycle $c_{ui} \in \hat{\mathcal{C}}_i^u$ (modulo $I$) by Propositions \ref{here} and \ref{circle 1}.4.
Thus the sum
$$\sum_{i \in Q_0} c_{ui} \in \bigoplus_{i \in Q_0} e_iAe_i$$
is in $Z$, by Lemma \ref{longlist3}.1.
Whence $e_iAe_i \subseteq Ze_i$ by Lemma \ref{generated by}.
Furthermore,
$$Ze_i = Ze_i^2 = e_iZe_i \subseteq e_iAe_i.$$
Therefore $Ze_i = e_iAe_i$.

We now claim that there is an algebra isomorphism $Z \cong Ze_i$ for each $i \in Q_0$.
Indeed, fix $i \in Q_0$ and suppose $z \in Z$ is nonzero.
Then there is some $j \in Q_0$ such that $ze_j \not = 0$.
Furthermore, since $Q$ is a dimer quiver, there is a path $p$ from $i$ to $j$.

Assume to the contrary that $ze_jp = 0$.
Thus, since $I$ is generated by binomials, it suffices to suppose $ze_j = c_1 - c_2$ where $c_1$ and $c_2$ are paths.
But since $A$ is cancellative, $ze_jp = 0$ implies $c_1 = c_2$.
Whence $ze_j = 0$, a contradiction.
Therefore $ze_j p \not = 0$.
Consequently,
$$p e_i z = pz = zp = ze_j p \not = 0.$$
Whence $ze_i \not = 0$.
Thus the algebra homomorphism $Z \to Ze_i$, $z \mapsto ze_i$, is injective, hence an isomorphism.
This proves our claim.

(2) Follows from Proposition \ref{circle 1}.2 and Lemma \ref{generated by}.

(3) $A$ is generated as a $Z$-module by all paths of length at most $|Q_0|$ by Claim (1) and \cite[second paragraph of proof of Theorem 2.11]{B6}.
Thus $A$ is a finitely generated $Z$-module.
Furthermore, $A$ is a finitely generated $k$-algebra since $Q$ is finite.
Therefore $Z$ is also a finitely generated $k$-algebra \cite[1.1.3]{McR}.
\end{proof}

\begin{Lemma} \label{finally!'}
Suppose $A$ is cancellative, and let $p \in \mathcal{C}^u$.
Then $u = 0$ if and only if $\overbar{p} = \sigma^m$ for some $m \geq 0$.
\end{Lemma}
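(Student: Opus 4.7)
The plan is to prove the two directions separately, with the forward direction being essentially immediate and the reverse requiring a short argument by contradiction that uses several of the preceding results.

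For the forward direction, I would argue as follows. If $u = 0$ and $p$ is the trivial cycle $e_i$, then $\overbar{p} = 1 = \sigma^0$, giving $m = 0$. Otherwise $p$ is a nontrivial cycle in $\mathcal{C}^0$, and Lemma \ref{here3'}.1 directly yields $\overbar{p} = \sigma^m$ for some $m \geq 1$. The hypothesis of Lemma \ref{here3'} that $A$ has no non-cancellative pair involving a vertex is automatic here because $A$ is cancellative.

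For the reverse direction, I would suppose $\overbar{p} = \sigma^m$ with $m \geq 0$ and argue by contradiction. Assume $u \neq 0$; set $i := \operatorname{t}(p)$. Since $A$ is cancellative, the contrapositive of Proposition \ref{here} gives $\hat{\mathcal{C}}^u_i \neq \emptyset$, so I may pick $q \in \hat{\mathcal{C}}^u_i$. Since $p,q \in \mathcal{C}^u$, Lemma \ref{longlist} produces some $n \in \mathbb{Z}$ with $\overbar{p} = \overbar{q}\sigma^n$, i.e.\ $\overbar{q}\sigma^n = \sigma^m$ in the fraction field of the polynomial ring $B$. Since $\overbar{q} \in B$, one concludes $m \geq n$ and $\overbar{q} = \sigma^{m-n}$ in $B$. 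Proposition \ref{circle 1}.1 gives $\sigma \nmid \overbar{q}$, forcing $m = n$ and $\overbar{q} = 1$.

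The final step — the step I see as the crux — is ruling out $\overbar{q} = 1$. Here I would invoke Theorem \ref{AKZ}, which (since $A$ is cancellative) guarantees $Q_1^{\mathcal{S}} = \emptyset$, i.e.\ every arrow of $Q$ lies in at least one simple matching. Consequently $\bar{\tau}(a)$ contributes a nonconstant monomial factor $\prod_{a \in D \in \mathcal{S}} x_D$ for every arrow $a$, and since $u \neq 0$ the cycle $q$ has positive length, so $\overbar{q}$ is divisible by some $x_D$ and in particular $\overbar{q} \neq 1$. This contradiction completes the argument. The main obstacle is really just spotting that Theorem \ref{AKZ} is what is needed to prevent the degenerate outcome $\overbar{q} = 1$; the rest is bookkeeping in the UFD $B$.
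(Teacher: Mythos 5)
Your proof is correct and follows essentially the same route as the paper's: the forward direction is Lemma \ref{here3'}.1, and the reverse direction produces $q \in \hat{\mathcal{C}}^u$ via Proposition \ref{here}, compares $\overbar{p}$ and $\overbar{q}$ up to a power of $\sigma$ (the paper uses Lemma \ref{here2}.2 where you use the interchangeable Lemma \ref{longlist}), forces $\overbar{q} = 1$ via Proposition \ref{circle 1}.1, and rules this out with Theorem \ref{AKZ}. The only implicit point is that concluding $m \geq n$ from $\overbar{q}\sigma^n = \sigma^m$ uses $\sigma \neq 1$, which Lemma \ref{at least one} supplies and which the paper cites explicitly at the corresponding step.
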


\begin{proof}
($\Rightarrow$) Lemma \ref{here3'}.1.

($\Leftarrow$) Let $u \in \mathbb{Z}^2 \setminus 0$, and assume to the contrary that $p \in \mathcal{C}^u$ satisfies $\overbar{p} = \sigma^m$ for some $m \geq 0$.
Since $A$ is cancellative, there is a cycle $q \in \hat{\mathcal{C}}^u_{\operatorname{t}(p)}$ by Proposition \ref{here}.
Furthermore, $\sigma \nmid \overbar{q}$ by Proposition \ref{circle 1}.1.
Thus there is some $n \geq 0$ such that
$$\overbar{p} = \overbar{q}\sigma^n,$$
by Lemma \ref{here2}.2.
Whence $\sigma^m = \overbar{q} \sigma^n$.
Furthermore, $\sigma \not = 1$ by Lemma \ref{at least one}.
Therefore $m = n$ and
\begin{equation} \label{bar eta q = 1}
\overbar{q} = 1,
\end{equation}
since $B$ is a polynomial ring.
But each arrow in $Q$ is contained in a simple matching by Theorem \ref{AKZ}.
Therefore $\overbar{q} \not = 1$, contrary to (\ref{bar eta q = 1}).
\end{proof}

Recall that $B := k[\mathcal{S}]$ is the polynomial ring generated by the simple matchings $\mathcal{S}$.

\begin{Proposition} \label{injective prop}
Suppose $A$ is cancellative.
The homomorphisms
$$\tau: A \to M_{|Q_0|}(B) \ \ \ \text{ and } \ \ \ \eta: A \to M_{|Q_0|}\left(k[\mathcal{P}]\right)$$
are injective.
\end{Proposition}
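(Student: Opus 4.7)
The plan is to reduce the injectivity of $\tau$ to showing that its restriction $\bar\tau : e_j A e_i \to B$ is an injective $k$-linear map for every $i, j \in Q_0$, and then to deduce injectivity of $\eta$ by a specialization argument. Since $\tau(e_i) = e_{ii}$ and $\tau(p) = \bar\tau(p)\,e_{ji}$ for each path $p \in e_j A e_i$, writing an element $x \in A$ as $\sum_{i,j} e_j x e_i$ shows that $\tau(x) = 0$ is equivalent to $\bar\tau(e_j x e_i) = 0$ for every ordered pair $(i,j)$. Because $I$ is generated by binomials $p - q$ with coincident tail lifts and coincident head lifts, the winding vector of a path is well defined on its $I$-class, and $e_j A e_i$ carries a $k$-basis of paths (modulo $I$) that is partitioned by winding $u \in \mathbb{Z}^2$.

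Each such basis element is sent by $\bar\tau$ to a monomial in $B$, and monomials in a polynomial ring are $k$-linearly independent, so the injectivity of $\bar\tau$ reduces to the claim that distinct basis paths produce distinct monomials. I will prove the following stronger statement: if $p, q$ are paths in $e_j A e_i$ with $\bar\tau(p) = \bar\tau(q)$, then $p = q$ in $A$. When $p$ and $q$ have the same winding, this is exactly Lemma \ref{easy injective}. When they have different windings $u \neq u'$, I derive a contradiction as follows. Starting from any path $r_0 \colon j \to i$ (which exists since $Q$ is a dimer quiver; cf.\ the proof of Proposition \ref{i=j}.1) of winding $v_0$, I use Proposition \ref{here}---valid since $A$ is cancellative---to pick a cycle $c \in \hat{\mathcal{C}}^{-u - v_0}_j$ (or $c = e_j$ if $-u - v_0 = 0$) and set $r := r_0 c$, which is a path from $j$ to $i$ of winding $-u$. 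Then $rp \in \mathcal{C}^0_i$ whereas $rq \in \mathcal{C}^{u' - u}_i$ with $u' - u \neq 0$, so Lemma \ref{finally!'} gives $\bar\tau(rp) = \sigma^m$ for some $m \geq 0$, while $\bar\tau(rq)$ is not a power of $\sigma$. This contradicts $\bar\tau(rp) = \bar\tau(r)\bar\tau(p) = \bar\tau(r)\bar\tau(q) = \bar\tau(rq)$.

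The injectivity of $\eta$ then follows by a short comparison. Let $\phi \colon k[x_D \mid D \in \mathcal{P}] \to B$ be the $k$-algebra map defined by $\phi(x_D) = x_D$ for $D \in \mathcal{S}$ and $\phi(x_D) = 1$ for $D \in \mathcal{P} \setminus \mathcal{S}$. Applied entrywise, the induced map $M_n(\phi)$ satisfies $M_n(\phi) \circ \eta = \tau$ on each arrow by (\ref{taua}), and hence on all of $A$; therefore $\eta(x) = 0$ implies $\tau(x) = 0$, which by the previous paragraph forces $x = 0$.

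The main obstacle is the winding-separation step in the second paragraph. It crucially depends on Proposition \ref{here}, which realizes every prescribed winding class by a cycle (and thereby provides paths between any two vertices in any winding class), together with Lemma \ref{finally!'}, which characterizes winding zero at the level of $\bar\tau$-images in $B$; without these two inputs there seems to be no direct way to distinguish monomials coming from paths of different winding.
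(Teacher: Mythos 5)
Your proposal is correct and follows essentially the same route as the paper: both arguments compose $p,q$ with an auxiliary path so that one product becomes a contractible cycle, invoke Lemma \ref{finally!'} (equivalently Lemma \ref{here3'}.1) to control the $\bar{\tau}$-image by winding, finish the equal-winding case with Lemma \ref{easy injective} and cancellativity, and deduce injectivity of $\eta$ from that of $\tau$ by the specialization $x_D \mapsto 1$ for $D \notin \mathcal{S}$. The only differences are organizational: you split by winding of paths directly (building the auxiliary path of prescribed winding via Proposition \ref{here}) where the paper first reduces paths to cycles at a vertex, and you make explicit the passage from injectivity on paths to injectivity on $k$-linear combinations via linear independence of monomials, which the paper leaves implicit.
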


\begin{proof}
(i) We fist claim that $\tau$ is injective on the vertex corner rings $e_iAe_i$, $i \in Q_0$.
Fix a vertex $i \in Q_0$ and let $p,q \in e_iAe_i$ be cycles satisfying $\overbar{p} = \overbar{q}$.
Let $r$ be a path such that $r^+$ is path from $\operatorname{h}(p^+)$ to $\operatorname{t}(p^+)$.
Then $rp \in \mathcal{C}^0$.
Thus there is some $m \geq 0$ such that
$$rp = \sigma_i^m,$$
by Lemma \ref{here3'}.2.
Whence
$$\overbar{rq} = \overbar{r}  \overbar{q} = \overbar{r}  \overbar{p} = \overbar{rp} = \overbar{\sigma_i^m} = \overbar{\sigma}_i^m = \sigma^m.$$
Thus $rq \in \mathcal{C}^0$ by Lemma \ref{finally!'}.
Hence $rq = \sigma_i^m = rp$ by Lemma \ref{easy injective}.
Therefore $p = q$ since $A$ is cancellative.

(ii) We now claim that $\tau$ is injective on paths.
Let $p,q \in e_jAe_i$ be paths satisfying $\overbar{p} = \overbar{q}$.
Let $r$ be a path from $j$ to $i$.
The two cycles $pr$ and $qr$ at $j$ then satisfy $\overbar{pr} = \overbar{qr}$.
Thus $pr = qr$ since $\tau$ is injective on the corner ring $e_jAe_j$ by Claim (i).
Therefore $p = q$ since $A$ is cancellative.

(iii) Since $A$ is generated by paths and $\tau$ is injective on paths, it follows that $\tau$ is injective.

(iv) Finally, we claim that $\eta$ is injective.
Let $g \in A$, and suppose $\eta(g) = 0$.
Then
\begin{equation} \label{tau g}
\tau(g) = \eta(g)|_{x = 1 \, : \, x \not \in \mathcal{S}} = 0.
\end{equation}
Therefore $g = 0$ by Claim (iii).
\end{proof}

\begin{Theorem} \label{arehom}
Cancellative dimer algebras are ghor algebras.
\end{Theorem}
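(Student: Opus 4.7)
The plan is to observe that this theorem is essentially a direct corollary of Proposition \ref{injective prop} together with Lemma \ref{tau'A'}. Recall that the homotopy algebra is defined as $\Lambda = kQ/\ker\eta$, where $\eta: kQ \to M_n(k[x_D \mid D \in \mathcal{P}])$ is the algebra homomorphism from (\ref{eta def}). By Lemma \ref{tau'A'}, the defining ideal $I$ of the dimer algebra $A$ is contained in $\ker\eta$, so $\eta$ factors as
$$kQ \xrightarrow{\pi} A \xrightarrow{\bar\eta} M_n\bigl(k[x_D \mid D \in \mathcal{P}]\bigr),$$
where $\pi$ is the canonical projection. In particular, $\ker\eta = \pi^{-1}(\ker\bar\eta)$.

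First I would note that under the hypothesis that $A$ is cancellative, Proposition \ref{injective prop} gives injectivity of the induced map $\bar\eta: A \to M_n(k[x_D \mid D \in \mathcal{P}])$, that is, $\ker\bar\eta = 0$. Combining this with the factorization above yields $\ker\eta = \pi^{-1}(0) = \ker\pi = I$, and hence
$$\Lambda \;=\; kQ/\ker\eta \;=\; kQ/I \;=\; A,$$
exhibiting $A$ as a homotopy algebra.

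There is no real obstacle: all the work is already done in Proposition \ref{injective prop}, whose proof in turn rests on the chain of earlier lemmas (especially Lemma \ref{here3'}, Lemma \ref{finally!'}, and Lemma \ref{easy injective}) establishing that on corner rings $\bar\tau$ separates cycles by cohomology class $u \in \mathbb{Z}^2$ and that cancellativity allows these matrix-entry computations to be promoted back to equalities in $A$. So the proof of Theorem \ref{arehom} should be a one- or two-line deduction citing Proposition \ref{injective prop}, with perhaps an explicit mention of the factorization through $I$ afforded by Lemma \ref{tau'A'} for clarity.
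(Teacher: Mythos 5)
Your proposal is correct and follows essentially the same route as the paper: the paper's proof likewise shows $I = \ker\eta$ by combining Lemma \ref{tau'A'} (for $I \subseteq \ker\eta$) with the injectivity of the induced map on $A$ from Proposition \ref{injective prop} (the paper passes through $\tau$ via the specialization $x_D = 1$ for $D \notin \mathcal{S}$, but that reduction is already contained in the proof of that proposition). No gap.
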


\begin{proof}
Let $A = kQ/I$ be cancellative.
We want to show that $I = \ker \eta$.

Indeed,  $I \subseteq \ker \eta$ by Lemma \ref{tau'A'}.
So let $g \in kQ$, and suppose $\eta(g) = 0$.
Then $\tau(g) = 0$ by (\ref{tau g}).
Whence $g \in I$ by Proposition \ref{injective prop}.
Therefore $\ker \eta \subseteq I$.
\end{proof}

\section{Proof of main theorem} \label{Cancellative dimer algebras}

Throughout, let $A = kQ/I$ be a dimer algebra, and let $\psi: A \to A'$ be a cyclic contraction to a cancellative dimer algebra $A' = kQ'/I'$.
If $A$ is cancellative, then we may take $\psi$ to be the identity map.

Let $\tau: A' \to M_{|Q'_0|}(B)$ be the algebra homomorphism defined in Lemma \ref{tau'A'}, with $B$ the polynomial ring generated by the simple matchings of $A'$,
$$B = k[\mathcal{S}'].$$
To prove Theorem \ref{main}, we will use the composition of $\psi$ with $\tau$.
Specifically, let
$$\tau_{\psi}: A \to M_{|Q_0|}(B)$$
be the $k$-linear map defined for each $i,j \in Q_0$ and $p \in e_jAe_i$ by
\begin{equation*} \label{tilde tau define}
p \mapsto \bar{\tau}\psi(p) \cdot e_{ji}.
\end{equation*}
Denote by $\overbar{p} := \overbar{\tau}_{\psi}(p) := \bar{\tau}\psi(p)$ the single nonzero matrix entry of $\tau_{\psi}(p)$.

\begin{Lemma} \label{homomorphism on corner}
The map $\tau_{\psi}: A \to M_{|Q_0|}(B)$ is an algebra homomorphism.
\end{Lemma}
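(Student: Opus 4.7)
The plan is to prove multiplicativity path-by-path, noting that $\tau_{\psi}$ is $k$-linear by construction and that $A$ is spanned by (classes of) paths. It therefore suffices to verify $\tau_{\psi}(pq) = \tau_{\psi}(p)\tau_{\psi}(q)$ for all pairs of paths $p \in e_jAe_i$ and $q \in e_{\ell}Ae_k$, together with $\tau_{\psi}(e_i) = e_{ii}$; the latter is immediate since $\psi(e_i) = e_{\psi(i)}$ and $\bar{\tau}(e_{\psi(i)}) = 1$, so $\tau_{\psi}$ sends the unit to the unit.

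First I would split into two cases on the matrix indices. If $i \neq \ell$, then $pq = 0$ in $A$, so the left side vanishes; on the right side $\tau_{\psi}(p)\tau_{\psi}(q)$ contains the factor $e_{ji}e_{\ell k}$, which is $0$ in $M_{|Q_0|}(B)$, so both sides vanish. If $i = \ell$, then $pq$ is a genuine concatenation of paths, and the multiplicative extension of $\psi$ to paths in Definition \ref{contraction} gives $\psi(pq) = \psi(p)\psi(q)$ in $kQ'$, and hence in $A'$. Since $\psi(p) \in e_{\psi(j)}A'e_{\psi(i)}$ and $\psi(q) \in e_{\psi(i)}A'e_{\psi(k)}$, and since $\tau : A' \to M_{|Q'_0|}(B)$ is an honest algebra homomorphism by Lemma \ref{tau'A'}, we deduce $\bar{\tau}\psi(pq) = \bar{\tau}\psi(p)\cdot \bar{\tau}\psi(q)$. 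Combined with $e_{ji}e_{ik} = e_{jk}$ in $M_{|Q_0|}(B)$, this gives $\tau_{\psi}(p)\tau_{\psi}(q) = \bar{\tau}\psi(p)\bar{\tau}\psi(q)\cdot e_{jk} = \tau_{\psi}(pq)$, as required.

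The only subtlety, and what I would flag as the would-be obstacle, is that $\psi$ itself is \emph{not} an algebra homomorphism, as noted in the Remark following Lemma \ref{positive length cycle}: for arrows $a_1, a_2$ with $\operatorname{h}(a_1) \neq \operatorname{t}(a_2)$ but $\psi(\operatorname{h}(a_1)) = \psi(\operatorname{t}(a_2))$, the product $\psi(a_2)\psi(a_1)$ can be nonzero in $A'$ even though $\psi(a_2 a_1) = 0$. The crux of the argument is that $\tau_{\psi}$ records source and target data via the \emph{original} vertex set $Q_0$ through the matrix indices $e_{ji}$, not via the contracted $Q'_0$; this bookkeeping forces $\tau_{\psi}(p)\tau_{\psi}(q)$ to vanish on precisely the pairs non-composable in $A$ where $\psi$'s multiplicativity would otherwise break, so the failure of $\psi$ to be a homomorphism is absorbed by the matrix-index structure and never obstructs $\tau_{\psi}$.
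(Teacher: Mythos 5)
Your proof is correct and follows essentially the same route as the paper: reduce to paths, use that $\psi$ is multiplicative on genuine concatenations together with the fact that $\tau$ is an algebra homomorphism on $A'$ (Lemma \ref{tau'A'}), and observe that the matrix units $e_{ji}$ indexed by the original vertex set kill exactly the non-composable products where $\psi$ itself fails to be multiplicative. In fact your write-up is more complete than the paper's two-line sketch, which only records that $\psi$ is a homomorphism on the vertex corner rings and leaves the non-composable case implicit; the point you flag as the ``subtlety'' is precisely the content of the lemma.
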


\begin{proof}
By Lemma \ref{tau'A'}, $\tau: A' \to M_{|Q'_0|}(B)$ is an algebra homomorphism.
Furthermore, $\psi$ is a $k$-linear map, and an algebra homomorphism when restricted to each vertex corner ring $e_iAe_i$.
\end{proof}

\begin{Lemma} \label{here4}
Let $p$ be a nontrivial cycle.
\begin{enumerate}
 \item If $p \in \mathcal{C}^0$, then $\overbar{p} = \sigma^m$ for some $m \geq 1$.
 \item If $p \in \mathcal{C} \setminus \hat{\mathcal{C}}$, then $\sigma \mid \overbar{p}$.
\end{enumerate}
\end{Lemma}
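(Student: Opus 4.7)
The plan is to route both claims through the cancellative algebra $A'$ and invoke the corresponding parts of Lemma \ref{here3'} there. This is necessary because the $\sigma$ in the statement refers, by the convention of Notation \ref{tau bar notation} in this section, to the simple matchings of $A'$ rather than of $A$. By construction, $\bar{\tau}_\psi = \bar{\tau}\circ\psi$ factors through $A'$, so both parts reduce to transferring the conditions ``$p\in\mathcal{C}^0$'' and ``$p\in\mathcal{C}\setminus\hat{\mathcal{C}}$'' across $\psi$.

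For part (1), let $p \in \mathcal{C}^0_i$ be a nontrivial cycle. First, $A'$ is cancellative, hence admits a simple (in particular, perfect) matching by Lemma \ref{at least one}, so Lemma \ref{positive length cycle}.1 guarantees that $\psi(p)$ is not a vertex. Second, since contracting an arrow in $Q$ identifies its endpoints in every fiber of $\pi$, the contraction $\psi$ lifts to the universal cover, and a closed lift $p^+$ descends to a closed lift $\psi(p)^+$ in $Q'^+$; hence $\psi(p)$ is a nontrivial element of $\mathcal{C}'^0_{\psi(i)}$. Applying Lemma \ref{here3'}.2 to $A'$ (whose hypothesis on non-cancellative pairs with a vertex is vacuously true) then yields $\psi(p) = {\sigma'_{\psi(i)}}^m$ in $A'$ for some $m\geq 1$. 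Applying $\bar{\tau}$ and using the earlier identity $\bar{\tau}(\sigma'_j) = \sigma$ for every $j\in Q'_0$, I conclude $\overbar{p} = \bar{\tau}\psi(p) = \sigma^m$.

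For part (2), I would copy the strategy of Lemma \ref{here3'}.3 with part (1) in place of Lemma \ref{here3'}.1. If $p \in \mathcal{C}\setminus\hat{\mathcal{C}}$, then by definition a cyclic permutation $\tilde{p}$ of $p$ admits a lift containing a nontrivial cyclic subpath, which is the lift of a nontrivial cycle $q\in\mathcal{C}^0$. Writing $\tilde{p} = p_2 q p_1$ in $A$ and using multiplicativity of $\bar{\tau}_\psi$ (Lemma \ref{homomorphism on corner}) together with the commutativity of $B$, I see that cyclic permutation does not alter the image, so $\overbar{p} = \overbar{\tilde{p}} = \overbar{p_2}\,\overbar{q}\,\overbar{p_1}$. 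Part (1) applied to $q$ gives $\overbar{q} = \sigma^m$ with $m\geq 1$, whence $\sigma\mid\overbar{p}$.

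The main obstacle, such as it is, lies in justifying that $\psi$ sends $\mathcal{C}^0$ into $\mathcal{C}'^0$---that is, that edge contraction commutes with passage to the universal cover of $T^2$. This is essentially a formal consequence of Definition \ref{contraction}, but it is the point where the covering-quiver geometry enters the argument; once it is granted, both claims follow by direct reduction to the cancellative case already handled in Lemma \ref{here3'}.
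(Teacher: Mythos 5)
Your proof is correct and follows essentially the same route as the paper: push $p$ through $\psi$ to the cancellative algebra $A'$ and invoke Lemma \ref{here3'} there. The only differences are cosmetic --- you explicitly justify via Lemmas \ref{at least one} and \ref{positive length cycle} that $\psi(p)$ remains a nontrivial cycle (a point the paper leaves implicit), and for part (2) you re-run the argument of Lemma \ref{here3'}.3 with $\bar{\tau}_{\psi}$ in place of citing it directly, which is equally valid.
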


\begin{proof}
If $p^+$ is a cycle (resp.\ has a cyclic subpath) in $Q^+$, then $\psi(p)^+$ is a cycle (resp.\ has a cyclic subpath) in $Q'^+$.
Furthermore, $A'$ is cancellative.
Claims (1) and (2) therefore hold by Lemmas \ref{here3'}.1 and \ref{here3'}.3 respectively.
\end{proof}

\begin{Lemma} \label{coincident}
If $\psi(p) = \psi(q)$, then $p^+$ and $q^+$ have coincident tails and coincident heads.
\end{Lemma}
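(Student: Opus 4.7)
The plan is to lift the contraction $\psi$ to a map $\psi^+ : Q^+ \to {Q'}^+$ on universal covers and then transfer the hypothesis $\psi(p)=\psi(q)$ upstairs to obtain the coincidence of endpoints. The essential structural input is Lemma~\ref{positive length cycle}.1, which forbids $\psi$ from contracting any cycle in the underlying graph of $Q$ to a vertex; equivalently, the subgraph of $Q$ spanned by the contracted arrows $Q_1^*$ is a forest, so its preimage in $Q^+$ under $\pi$ is also a forest.

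First I would construct $\psi^+$ with care: since $\psi$ preserves the ambient torus $T^2$, and both covers sit in the common $\mathbb{R}^2$ via $\pi$, the contraction lifts to a $\mathbb{Z}^2$-equivariant quiver map obtained by contracting every lift of an arrow in $Q_1^*$. With the fundamental domain $[0,1)\times[0,1)$ chosen compatibly in both covers, a routine check shows $\psi^+$ carries the canonical lift $p^+$ of any path $p$ in $Q$ to the canonical lift $\psi(p)^+$ of $\psi(p)$ in ${Q'}^+$.

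From $\psi(p)=\psi(q)$ it then follows that $\psi^+(p^+)=\psi^+(q^+)$ in ${Q'}^+$, so that
\[
\psi^+(\operatorname{t}(p^+)) = \psi^+(\operatorname{t}(q^+)) \qquad\text{and}\qquad \psi^+(\operatorname{h}(p^+)) = \psi^+(\operatorname{h}(q^+)).
\]
To promote these $\psi^+$-identifications in ${Q'}^+$ back to honest equalities in $Q^+$, I use that two vertices of $Q^+$ are merged by $\psi^+$ only if they lie in the same connected component of the forest of contracted-arrow lifts. Because $\psi^+$ is $\mathbb{Z}^2$-equivariant and the deck action is free on vertices of ${Q'}^+$, two distinct lifts of a single vertex of $Q_0$ can never be $\psi^+$-identified: such an identification would force a nontrivial element of $\mathbb{Z}^2$ to fix a vertex of ${Q'}^+$, contradicting freeness. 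This rules out any nontrivial collapse on the endpoints of $p^+$ and $q^+$, and yields $\operatorname{t}(p^+)=\operatorname{t}(q^+)$ and $\operatorname{h}(p^+)=\operatorname{h}(q^+)$.

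The main obstacle is the middle step: verifying that with the chosen fundamental domain in each cover, $\psi^+$ genuinely carries canonical lifts to canonical lifts, rather than to some $\mathbb{Z}^2$-translate. Once this bookkeeping is handled, the forest property from Lemma~\ref{positive length cycle}.1 together with $\mathbb{Z}^2$-freeness closes the argument.
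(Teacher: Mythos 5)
Your overall strategy---lift the contraction to a $\mathbb{Z}^2$-equivariant map $\psi^+:Q^+\to Q'^+$ (using Lemma~\ref{positive length cycle} to ensure the contracted arrows span a forest, so the quotient really is the cover of $Q'$) and then use freeness of the deck action to rule out a nontrivial translate between $\operatorname{h}(p^+)$ and $\operatorname{h}(q^+)$---is viable and is genuinely different in presentation from the paper's proof, which avoids constructing $\psi^+$ altogether: the paper joins $\operatorname{h}(p^+)$ to $\operatorname{h}(q^+)$ by a path $r^+$, observes that $\psi(r)^+$ is a cycle, and applies the contrapositive of Lemma~\ref{positive length cycle}.2 to conclude $r^+$ is a cycle. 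Both arguments ultimately rest on the same structural fact (the contraction does not kill homology classes of cycles), but yours makes the covering-space picture explicit, which is arguably cleaner once the bookkeeping is done.

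There is, however, one genuine gap in the middle step. You assert that $\psi(p)=\psi(q)$ implies $\psi^+(p^+)=\psi^+(q^+)$ as paths in $Q'^+$. This is false in general: the hypothesis $\psi(p)=\psi(q)$ is an equality in $A'=kQ'/I'$, i.e.\ modulo the relations $I'$, so the two image paths may be distinct elements of $kQ'$ and their lifts distinct paths in $Q'^+$. What you actually need is only that the lifts of $\psi(p)$ and $\psi(q)$ with a common tail have a common head, and this is precisely Lemma~\ref{r+}.1 (the relations in $I'$ lift to homotopy relations on $Q'^+$, so paths equal modulo $I'$ have coincident lifted endpoints). With that citation inserted, your displayed equalities $\psi^+(\operatorname{t}(p^+))=\psi^+(\operatorname{t}(q^+))$ and $\psi^+(\operatorname{h}(p^+))=\psi^+(\operatorname{h}(q^+))$ hold, and the rest of your argument (equivariance plus freeness of the $\mathbb{Z}^2$-action on vertices of $Q'^+$) goes through; note also that this makes your ``canonical lift to canonical lift'' bookkeeping unnecessary, since only the common tail of $\psi^+(p^+)$ and $\psi^+(q^+)$ matters, not which $\mathbb{Z}^2$-translate it is.
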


\begin{proof}
Suppose $\psi(p) = \psi(q)$.
Consider lifts $p^+$ and $q^+$ for which $\operatorname{t}(p^+) = \operatorname{t}(q^+)$.
Let $r^+$ be a path from $\operatorname{h}(p^+)$ to $\operatorname{h}(q^+)$.
Then $\psi(r^+)$ is a cycle in $Q'^+$ since $\psi(p) = \psi(q)$, by Lemma \ref{r+}.1.
Thus $r^+$ is also a cycle by the contrapositive of Lemma \ref{positive length cycle}.2.
Whence $\operatorname{h}(p^+) = \operatorname{h}(q^+)$.
\end{proof}

Denote by $\mathcal{P}$ and $\mathcal{P}'$ the set of perfect matchings of $Q$ and $Q'$ respectively.
Consider the algebra homomorphisms defined in (\ref{eta def}),
$$\eta: kQ \to M_{|Q_0|}\left( k[\mathcal{P}] \right) \ \ \ \text{ and } \ \ \ \eta': kQ' \to M_{|Q'_0|}\left( k[\mathcal{P}'] \right).$$
By Lemma \ref{cannot contract}, $\psi$ cannot contract a unit cycle to a vertex.
Thus, if $y$ is a perfect matching of $Q'$, then $\psi^{-1}(y)$ is a perfect matching of $Q$.
We may therefore view $k[\mathcal{P}']$ as a subalgebra of $k[\mathcal{P}]$ under the identification $y = \psi^{-1}(y)$ for each $y \in \mathcal{P}'$.
For $g \in e_jkQe_i$, we then have
\begin{equation} \label{eta' psi}
\bar{\eta}'(\psi(g)) = \bar{\eta}(g)|_{x = 1 \,: \, \psi(x) \not \in \mathcal{P}'}.
\end{equation}

Denote the $\bar{\eta}$- and $\bar{\eta}'$-images of the unit cycles in $Q$ and $Q'$ by
$$\sigma_{\mathcal{P}} := \prod_{x \in \mathcal{P}} x \ \ \ \text{ and } \ \ \ \sigma_{\mathcal{P}'} := \prod_{y \in \mathcal{P}'} y.$$

\begin{Proposition}
The $k$-linear map $\psi: kQ \to kQ'$ induces a $k$-linear map of ghor algebras $\psi: \Lambda \to \Lambda'$.
Furthermore, for each $i,j \in Q_0$, the restriction
\begin{equation} \label{psi: ej}
\psi: e_j\Lambda e_i \hookrightarrow e_{\psi(j)} \Lambda' e_{\psi(i)}
\end{equation}
is injective.
\end{Proposition}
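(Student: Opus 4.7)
The plan has two steps: verify that $\psi$ descends to a $k$-linear map $\Lambda \to \Lambda'$, and then establish injectivity on each corner.

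For the descent, I would use the relation~(\ref{eta' psi}): for any $g \in e_j kQ e_i$, $\bar{\eta}'(\psi(g))$ is the specialization of $\bar{\eta}(g)$ obtained by setting $x_D = 1$ for those $D \in \mathcal{P}$ with $\psi(D) \notin \mathcal{P}'$. Consequently, if $g \in \ker \eta$ then every matrix entry $\bar{\eta}(e_j g e_i)$ vanishes, and hence each specialization $\bar{\eta}'(\psi(e_j g e_i))$ vanishes as well. Summing these over the fibres of $\psi \colon Q_0 \to Q'_0$ (and using that $\psi$ is $k$-linear with $\psi(e_i) = e_{\psi(i)}$) yields $\eta'(\psi(g)) = 0$; hence $\psi(\ker \eta) \subseteq \ker \eta'$, and we obtain the induced $k$-linear map $\psi \colon \Lambda \to \Lambda'$.

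For the injectivity of the restriction, I would reduce to a statement about pairs of paths. Since $\bar{\eta}$ is tautologically injective on the corner $e_j \Lambda e_i$, this corner is spanned as a $k$-vector space by the pairwise distinct monomials $\bar{\eta}(p)$ indexed by $\Lambda$-classes of paths $p$ from $i$ to $j$. Composing with $\bar{\eta}'$ is, by~(\ref{eta' psi}), the specialization $x_D \mapsto 1$ (for $\psi(D) \notin \mathcal{P}'$) applied to these monomials, so injectivity of $\psi$ on $e_j \Lambda e_i$ reduces to the implication
\[
\bar{\eta}'(\psi(p)) = \bar{\eta}'(\psi(q)) \quad \Longrightarrow \quad \bar{\eta}(p) = \bar{\eta}(q)
\]
for paths $p, q \in e_j kQ e_i$.

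To prove this implication, I would use that $A'$ is cancellative, so Theorem~\ref{arehom} identifies $\Lambda' = A'$; the hypothesis is then equivalent to $\psi(p) = \psi(q)$ in $A'$. Lemma~\ref{coincident} next yields lifts $p^+$ and $q^+$ in $Q^+$ with coincident tails and heads, so the $\bar{\eta}$-analog of Lemma~\ref{here2}.2 (whose proof carries over verbatim with $\bar{\eta}$ and $\sigma_{\mathcal{P}}$ replacing $\bar{\tau}$ and $\sigma$) gives $\bar{\eta}(p) = \bar{\eta}(q)\, \sigma_{\mathcal{P}}^{m}$ for some $m \in \mathbb{Z}$. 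Specializing via~(\ref{eta' psi}) gives $\bar{\eta}'(\psi(p)) = \bar{\eta}'(\psi(q))\, \sigma_{\mathcal{P}'}^{m}$, so since both sides are equal nonzero monomials, $\sigma_{\mathcal{P}'}^{m} = 1$. Lemma~\ref{at least one} supplies a simple matching, hence a perfect matching, of $A'$, so $\sigma_{\mathcal{P}'} \neq 1$ and therefore $m = 0$, giving $\bar{\eta}(p) = \bar{\eta}(q)$.

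The main obstacle will be the bridge between the algebraic equality $\psi(p) = \psi(q)$ in $\Lambda'$ and the geometric hypothesis of Lemma~\ref{coincident} about lifts in $A'$; Theorem~\ref{arehom}, which identifies $\Lambda'$ with the cancellative dimer algebra $A'$, supplies exactly this bridge. Once it is in place, the remaining steps are monomial bookkeeping via~(\ref{eta' psi}) and the integral-domain argument underlying Lemma~\ref{here2}.
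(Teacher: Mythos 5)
Your proposal is correct and follows essentially the same route as the paper's proof: descent via the specialization identity (\ref{eta' psi}), reduction of injectivity to pairs of paths, passage from $\bar{\eta}'(\psi(p)) = \bar{\eta}'(\psi(q))$ to $\psi(p)=\psi(q)$ in the cancellative $A'$, Lemma \ref{coincident} to get coincident lifts, the relation $\bar{\eta}(p)=\bar{\eta}(q)\sigma_{\mathcal{P}}^{m}$, and finally $\sigma_{\mathcal{P}'}\neq 1$ to force $m=0$. The only cosmetic differences are that you cite Theorem \ref{arehom} and the $\bar{\eta}$-analog of Lemma \ref{here2}.2 where the paper cites Proposition \ref{injective prop} and Lemma \ref{here4}.1; these are interchangeable here.
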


\begin{proof}
(i) If $g \in kQ$ satisfies $\eta(g) = 0$, then $\eta'(\psi(g)) = 0$, by (\ref{eta' psi}).
Thus,
$$\psi(\ker \eta) \subseteq \ker \eta'.$$
Therefore the map $\psi: kQ \to kQ'$ induces a well-defined map $\psi: \Lambda \to \Lambda'$.

(ii) Fix $i,j \in Q_0$.
We claim that the map (\ref{psi: ej}) is injective.
Let $p,q \in e_jkQe_i$ be paths satisfying $\bar{\eta}'(\psi(p-q)) = 0$.
To prove the claim, it suffices to show that $\bar{\eta}(p - q) = 0$.

Since $\bar{\eta}'(\psi(p-q)) = 0$, we have $\psi(p) = \psi(q)$ by Proposition \ref{injective prop}.
In particular, $p^+$ and $q^+$ have coincident tails and coincident heads, by Lemma \ref{coincident}.
Let $r^+$ be a path in $Q^+$ from $\operatorname{h}(p^+)$ to $\operatorname{t}(p^+)$.
Then there is some $m, n \geq 0$ such that
\begin{equation} \label{eta p =}
\bar{\eta}(rp) = \sigma^m_{\mathcal{P}} \ \ \ \text{ and } \ \ \ \bar{\eta}(rq) = \sigma^n_{\mathcal{P}},
\end{equation}
by Lemma \ref{here4}.1.
It follows that
\begin{equation} \label{yippy}
\bar{\eta}(p) = \bar{\eta}(q) \sigma_{\mathcal{P}}^{m-n}.
\end{equation}
Therefore
$$\bar{\eta}'(\psi(q)) \stackrel{\textsc{(i)}}{=} \bar{\eta}'(\psi(p)) \stackrel{\textsc{(ii)}}{=} \bar{\eta}(p)|_{x = 1\, : \, \psi(x) \not \in \mathcal{P}'}
\stackrel{\textsc{(iii)}}{=} \bar{\eta}(q)\sigma_{\mathcal{P}}^{m-n}|_{x = 1\, : \, \psi(x) \not \in \mathcal{P}'} \stackrel{\textsc{(iv)}}{=} \bar{\eta}'(\psi(q))\sigma_{\mathcal{P}'}^{m-n},$$
where (\textsc{i}) holds by assumption; (\textsc{ii}) and (\textsc{iv}) hold by (\ref{eta' psi}); and (\textsc{iii}) holds by (\ref{yippy}).
But then $m = n$ since $B$ is an integral domain and $\sigma_{\mathcal{P}'} \not = 1$.
Thus $\bar{\eta}(p) = \bar{\eta}(q)$, by (\ref{eta p =}).
Therefore $\eta(p - q) = 0$.
\end{proof}

The following strengthens Lemmas \ref{here2}.3 and \ref{here2}.4 for dimer algebras that admit cyclic contractions (specifically, the head and tail of the lifts $p^+$ and $q^+$ are not required to coincide).

\begin{Lemma} \label{r in T'}
Let $p,q \in e_jAe_i$ be distinct paths.
The following are equivalent:
 \begin{enumerate}
   \item $\psi(p) = \psi(q)$.
   \item $p,q$ is a non-cancellative pair.
   \item $\overbar{\tau}_{\psi}(p) = \overbar{\tau}_{\psi}(q)$.
   \item $\bar{\eta}(p) = \bar{\eta}(q)$.
 \end{enumerate}
\end{Lemma}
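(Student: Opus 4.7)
The plan is to establish the cycle (2) $\Rightarrow$ (4) $\Rightarrow$ (1) $\Rightarrow$ (2) together with the quick equivalence (1) $\Leftrightarrow$ (3), which together give all four equivalences.

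The equivalence (1) $\Leftrightarrow$ (3) is essentially free: one direction is immediate from the definition $\bar\tau_\psi = \bar\tau\circ\psi$, and the other follows because $\tau$ is injective on the cancellative algebra $A'$ by Proposition \ref{injective prop} (once one notes that the scalars $\bar\tau\psi(p)$ and $\bar\tau\psi(q)$ sit in the same matrix slot $(\psi(j),\psi(i))$, so their equality is the equality of matrices $\tau\psi(p)=\tau\psi(q)$). For (2) $\Rightarrow$ (4), I would first invoke Lemma \ref{r+}.1 to conclude that a non-cancellative pair $p,q$ has lifts $p^+,q^+$ with coincident tails and coincident heads; this places $p,q$ in the hypothesis of Lemma \ref{here2}, whose part (3) yields $\bar\eta(p)=\bar\eta(q)$. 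For (4) $\Rightarrow$ (1), I would pass through $\eta'$ on $A'$: the specialization relation (\ref{eta' psi}) exhibits $\bar\eta'\psi(\cdot)$ as a substitution of $\bar\eta(\cdot)$, so an equality of $\bar\eta$-values descends to an equality of $\bar\eta'\psi$-values, and injectivity of $\eta'$ on $A'=\Lambda'$ (Proposition \ref{injective prop}, combined with Theorem \ref{arehom}) forces $\psi(p)=\psi(q)$.

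The substantive step is (1) $\Rightarrow$ (2), which requires promoting the algebraic equality $\psi(p)=\psi(q)$ back into the existence of a cancellation witness in $A$. My approach is the following. Lemma \ref{coincident} supplies coincident tails and heads for the lifts $p^+,q^+$, so Lemma \ref{here2}.1 produces $m,n\geq 0$ with $p\sigma_i^m = q\sigma_i^n$. Applying $\psi$, which is multiplicative on paths, yields $\psi(p)\psi(\sigma_i)^m = \psi(q)\psi(\sigma_i)^n$ in $A'$; cancellativity of $A'$ together with $\psi(p)=\psi(q)\neq 0$ then reduces this to $\psi(\sigma_i)^m = \psi(\sigma_i)^n$. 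Since $\psi$ cannot contract a unit cycle to a vertex (Lemma \ref{cannot contract}), $\psi(\sigma_i)$ is a nontrivial cycle in $A'$, and a second application of cancellativity forces $m=n$: if $m>n$, then cancelling $\psi(\sigma_i)^n$ would give $\psi(\sigma_i)^{m-n}=e_{\psi(i)}$, impossible for a nontrivial cycle. Because $p\neq q$ we must have $m\geq 1$, and then $r:=\sigma_i^m$ satisfies $pr=qr\neq 0$ (nonvanishing is visible from the $\eta$-image, a nonzero monomial), exhibiting the required non-cancellative pair. The main obstacle is precisely this exponent-matching step, where one must combine cancellativity of $A'$ with the structural fact that $\psi(\sigma_i)$ remains a genuine cycle.
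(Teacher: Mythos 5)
Your proof is correct and follows essentially the same route as the paper: the same key inputs (Lemma \ref{coincident}, Lemma \ref{here2}, Proposition \ref{injective prop}, the specialization relation (\ref{eta' psi}), and Lemma \ref{cannot contract}) drive the same logical skeleton, with (4) $\Rightarrow$ (1) argued directly rather than through (3) as in the paper. The one step you handle differently is the exponent matching in (1) $\Rightarrow$ (2): the paper applies $\bar{\tau}_{\psi}$ to $p\sigma_i^m = q\sigma_i^n$ and concludes $m=n$ from $\sigma^m=\sigma^n$ in the integral domain $B$ with $\sigma\neq 1$, whereas you push the identity into $A'$ and cancel there; this works, but your assertion that $\psi(\sigma_i)^{m-n}=e_{\psi(i)}$ is ``impossible for a nontrivial cycle'' should be justified (e.g.\ by Lemma \ref{here3'}.1 applied to $A'$, which gives $\bar{\tau}(\psi(\sigma_i))=\sigma^{\ell}$ with $\ell\geq 1$ and $\sigma\neq 1$ by Lemma \ref{at least one}, whence this element cannot equal a vertex) --- a minor elision, not a gap.
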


\begin{proof}
(1) $\Rightarrow$ (2): Suppose $\psi(p) = \psi(q)$.
Then $p^+$ and $q^+$ have coincident tails and coincident heads, by Lemma \ref{coincident}.
Furthermore, $\mathcal{P}' \not = \emptyset$ since $A'$ is cancellative, by Lemma \ref{at least one}.
Whence $\mathcal{P} \not = \emptyset$, by Lemma \ref{cannot contract}.
Therefore $p,q$ is a non-cancellative pair, by Lemma \ref{here2}.4 (with $\bar{\tau}_{\psi}$ in place of $\bar{\eta}$).

(2) $\Rightarrow$ (3),(4): Holds by Lemma \ref{here2}.3 (with $\bar{\tau}_{\psi}$ in place of $\bar{\tau}$).

(3) $\Rightarrow$ (1): Holds since $\bar{\tau}: e_{\psi(j)}A'e_{\psi(i)} \to B$ is injective, by Proposition \ref{injective prop}.

(4) $\Rightarrow$ (3): If $\bar{\eta}(p) = \bar{\eta}(q)$, then
$$\bar{\tau}_{\psi}(p) = \bar{\eta}(p)|_{x = 1 \, : \, \psi(x) \not \in \mathcal{S}'} = \bar{\eta}(q)|_{x = 1 \, : \, \psi(x) \not \in \mathcal{S}'} = \bar{\tau}_{\psi}(q).$$
\end{proof}

The following theorem establishes the relationship between ghor algebras and dimer algebras on a torus.

\begin{Theorem} \label{first main}
There are algebra isomorphisms
$$\begin{array}{rcl}
\Lambda := kQ/\ker \eta & \stackrel{\textsc{(i)}}{=} & kQ/\ker \tau_{\psi} \\
& \stackrel{\textsc{(ii)}}{\cong} & A/\ker \tau_{\psi} \\
& \stackrel{\textsc{(iii)}}{=} & A/\left\langle p - q \ | \ p,q \text{ is a non-cancellative pair} \right\rangle.
\end{array}$$
\end{Theorem}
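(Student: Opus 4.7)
The plan is to deduce all three identifications from a single input, namely the equivalence of conditions (2), (3), (4) in Lemma \ref{r in T'}. First I lift $\tau_{\psi}$ to a $k$-linear (in fact algebra, by Lemma \ref{homomorphism on corner}) map $\tilde{\tau}_{\psi} := \tau_{\psi} \circ \pi_I : kQ \to M_{|Q_0|}(B)$, where $\pi_I : kQ \to A$ is the canonical quotient. By Lemma \ref{tau'A'}, $I \subseteq \ker \eta$, and by construction $I \subseteq \ker \tilde{\tau}_{\psi}$, so both kernels descend to ideals of $A$, and it will suffice to compare them there.

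For (i), the key observation is that for any path $p \in e_j kQ e_i$, each of $\eta(p)$ and $\tilde{\tau}_{\psi}(p)$ has the form (single monomial) $\cdot e_{ji}$, with monomials lying in polynomial rings. Using the block decomposition $kQ = \bigoplus_{i,j} e_j kQ e_i$ and the fact that distinct monomials in a polynomial ring are $k$-linearly independent, one sees that an element $\sum_\alpha c_\alpha p_\alpha \in e_j kQ e_i$ lies in $\ker \eta$ exactly when, for each monomial $m$, the coefficients of paths $p_\alpha$ with $\bar\eta(p_\alpha) = m$ sum to zero. Hence $\ker \eta$ is spanned as a $k$-vector space by the differences $\{p - q : p, q \in e_j kQ e_i,\ \bar{\eta}(p) = \bar{\eta}(q)\}$, and likewise $\ker \tilde{\tau}_{\psi}$ is spanned by the differences $\{p - q : \bar{\tau}_{\psi}(p) = \bar{\tau}_{\psi}(q)\}$. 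The equivalence of (3) and (4) in Lemma \ref{r in T'} (with the degenerate case $p \equiv q \pmod{I}$ already accounted for by $I \subseteq \ker \eta \cap \ker \tilde{\tau}_{\psi}$) shows these two spanning sets coincide, yielding $\ker \eta = \ker \tilde{\tau}_{\psi}$.

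Identification (ii) is then just the third isomorphism theorem applied to $\tilde{\tau}_{\psi} = \tau_{\psi} \circ \pi_I$ using $I = \ker \pi_I \subseteq \ker \tilde{\tau}_{\psi}$. For (iii), the same monomial-independence analysis, now carried out inside $A$, expresses the ideal $\ker \tau_{\psi} \subseteq A$ as the $k$-span of differences $p - q$ of distinct paths in $A$ with $\bar{\tau}_{\psi}(p) = \bar{\tau}_{\psi}(q)$; by the equivalence of (2) and (3) in Lemma \ref{r in T'}, these are precisely the non-cancellative pairs. Since $\ker \tau_{\psi}$ is itself a two-sided ideal and it is $k$-spanned by a set of elements it contains, it coincides with the two-sided ideal generated by that set, which is (iii).

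The only technical point requiring care is the monomial-class description of the kernels that underpins both (i) and (iii): one must justify that the kernel of a $k$-linear map sending paths to path-dependent monomials is really spanned by pairwise differences within each monomial class. This rests on the linear independence of distinct monomials and the block decomposition $\eta(e_j x e_i) = \bar{\eta}(x) \cdot e_{ji}$ (and analogously for $\tilde{\tau}_{\psi}$), which together force the kernel to split as a direct sum over pairs $(i,j) \in Q_0 \times Q_0$ of subspaces of $e_j kQ e_i$ (respectively $e_j A e_i$). Once this is set up, all three identifications fall out from the appropriate parts of Lemma \ref{r in T'}.
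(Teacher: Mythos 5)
Your proposal is correct and follows essentially the same route as the paper: the paper's (very terse) proof also reduces everything to the equivalences $(2)\Leftrightarrow(3)\Leftrightarrow(4)$ of Lemma \ref{r in T'}, deducing $I \subseteq \ker\eta = \ker\tau_{\psi}$ and then identifying $\ker\tau_{\psi}$ with the ideal generated by non-cancellative pairs. The monomial-class spanning argument you spell out is exactly the step the paper leaves implicit, and your justification of it is sound.
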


\begin{proof}
(\textsc{i}) and (\textsc{ii}) hold since
$$I  \stackrel{(a)}{\subseteq} \ker \eta \stackrel{(b)}{=} \ker \tau_{\psi}.$$
Indeed, ($a$) holds by Lemma \ref{tau'A'}, and ($b$) holds from the equivalence (3) $\Leftrightarrow$ (4) in Lemma \ref{r in T'}.
(\textsc{iii}) holds from the equivalence (2) $\Leftrightarrow$ (3) in Lemma \ref{r in T'}.
\end{proof}

\begin{Corollary} \label{injective cor}
The map $\tau_{\psi}: A \to M_{|Q_0|}(B)$ induces an injective algebra homomorphism on the ghor algebra $\Lambda$,
\begin{equation*} \label{tau psi lambda}
\tau_{\psi}: \Lambda \to M_{|Q_0|}(B).
\end{equation*}
\end{Corollary}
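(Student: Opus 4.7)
The plan is to combine Theorem \ref{first main} with Lemma \ref{homomorphism on corner} and invoke the first isomorphism theorem. Specifically, Lemma \ref{homomorphism on corner} already establishes that $\tau_{\psi}$ is an algebra homomorphism on $A$, so nothing further is needed on the multiplicative side. What remains is to check (i) that $\tau_{\psi}$ factors through the quotient $\Lambda$, and (ii) that the factored map is injective.

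For (i), observe that $\ker \tau_{\psi} \subseteq A$ pulls back to an ideal $J \subseteq kQ$ containing $I$, and by the equivalence (3) $\Leftrightarrow$ (4) of Lemma \ref{r in T'} together with Lemma \ref{tau'A'}, we have $J = \ker \eta$. Hence $\tau_{\psi}$ is constant on the equivalence classes defining $\Lambda = kQ/\ker \eta$, so it induces a well-defined $k$-linear, and by Lemma \ref{homomorphism on corner} multiplicative, map $\tau_{\psi}: \Lambda \to M_{|Q_0|}(B)$. For (ii), injectivity is immediate from the first isomorphism theorem: by Theorem \ref{first main} we have $\Lambda \cong A/\ker \tau_{\psi}$, and $\tau_{\psi}$ therefore descends to an injection on this quotient.

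There is essentially no obstacle here — the real work has already been done. Lemma \ref{r in T'} is what makes the kernels of $\eta$ and $\tau_{\psi}$ coincide, and Theorem \ref{first main} packages this as the isomorphism $\Lambda \cong A/\ker \tau_{\psi}$. The corollary is then a formal consequence, and the proof in the paper should amount to a single sentence citing these two results.
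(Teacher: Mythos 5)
Your proposal is correct and matches the paper, which states this corollary immediately after Theorem \ref{first main} with no separate proof: the identification $\Lambda = kQ/\ker\tau_{\psi} \cong A/\ker\tau_{\psi}$ from that theorem, together with Lemma \ref{homomorphism on corner}, makes injectivity of the induced map a formal consequence of the first isomorphism theorem, exactly as you argue.
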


\begin{Remark} \rm{
The ideal
$$\left\langle p - q \ | \ p,q \text{ is a non-cancellative pair} \right\rangle \subset A$$
is contained in the kernel of $\psi$, but not conversely.
Indeed, if $\psi$ contracts an arrow $\delta$, then $\delta - e_{\operatorname{t}(\delta)}$ is in the kernel of $\psi$, but $\delta$ and $e_{\operatorname{t}(\delta)}$ do not form a non-cancellative pair.
}\end{Remark}

Since $A'$ is cancellative, $A'$ is equal to its ghor algebra $\Lambda'$ by Theorem \ref{arehom} (or Theorem \ref{first main}).

\begin{Theorem} \label{impression prop}
Let $\psi: \Lambda \to \Lambda'$ be a cyclic contraction.
\begin{enumerate}
 \item The algebra homomorphisms
$$\tau_{\psi}: \Lambda \to M_{|Q_0|}(B) \ \ \ \text{ and } \ \ \ \tau: \Lambda' \to M_{|Q'_0|}(B)$$
are impressions of $\Lambda$ and $\Lambda'$.
 \item Suppose that either $k$ is uncountable, or $Q$ is cancellative.
Then $\tau_{\psi}$ classifies all simple $\Lambda$-module isoclasses of maximal $k$-dimension: for each such module $V$, there is some $\mathfrak{b} \in \operatorname{Max}B$ such that
 $$V \cong (B/\mathfrak{b})^{|Q_0|},$$
  where $av := \tau_{\psi}(a)v$ for each $a \in \Lambda$, $v \in (B/\mathfrak{b})^{|Q_0|}$.
 \item The centers of $\Lambda$ and $\Lambda'$ are given by the intersection and union of the vertex corner rings of $\Lambda$,
\begin{equation} \label{Z cong R S}
Z(\Lambda) \cong k\left[ \cap_{i \in Q_0} \bar{\tau}_{\psi}\left( e_i \Lambda e_i \right) \right] \subseteq k\left[ \cup_{i \in Q_0} \bar{\tau}_{\psi}\left( e_i\Lambda e_i \right) \right] \cong Z(\Lambda').
\end{equation}
\end{enumerate}
\end{Theorem}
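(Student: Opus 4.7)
The plan is to establish part (3) first, deduce (1) from it, and finally obtain (2) by a classification argument in the spirit of impressions. To compute $Z(\Lambda)$, I write an arbitrary central element as $z = \sum_{i \in Q_0} z_i$ with $z_i \in e_i\Lambda e_i$ and apply $\tau_\psi$ to each commutation relation $a z_i = z_j a$ for an arrow $a:i \to j$. Since $B$ is an integral domain and $\tau_\psi(a) = \bar\tau_\psi(a)\cdot e_{ji}$ with $\bar\tau_\psi(a) \neq 0$, cancellation yields $\bar\tau_\psi(z_i) = \bar\tau_\psi(z_j)$ independent of $i$, placing the common value in $\bigcap_{i \in Q_0} \bar\tau_\psi(e_i\Lambda e_i)$. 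Conversely, injectivity of $\tau_\psi$ on each corner ring (Corollary \ref{injective cor}) produces, for every $f$ in this intersection, a unique $z_i \in e_i\Lambda e_i$ with $\bar\tau_\psi(z_i) = f$, and the same computation confirms that $\sum_i z_i$ is central, establishing the first isomorphism in (\ref{Z cong R S}). For the second isomorphism, the defining identity of a cyclic contraction (Definition \ref{cyclic contraction}) equates $k[\bigcup_i \bar\tau_\psi(e_i\Lambda e_i)]$ with the cycle algebra $S = k[\bigcup_i \bar\tau(e_i\Lambda' e_i)]$; Propositions \ref{i=j}.1 and \ref{i=j}.2 then identify $S$ with $\bar\tau(Z(\Lambda')) \cong Z(\Lambda')$.

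For part (1), injectivity of $\tau_\psi$ on $\Lambda$ is Corollary \ref{injective cor}, and injectivity of $\tau$ on $\Lambda'$ is Proposition \ref{injective prop}. Generic surjectivity onto $M_{|Q_0|}(k)$: for each ordered pair $i,j \in Q_0$, the dimer quiver $Q$ carries a path $p_{ij}:i \to j$, and injectivity gives $\bar\tau_\psi(p_{ij}) \neq 0$ in $B$. The complement of the finite union of hypersurfaces $\mathcal{Z}(\bar\tau_\psi(p_{ij})) \subset \operatorname{Max}B$ is then a nonempty Zariski-open set on which each $\tau_\psi(p_{ij}) \bmod \mathfrak{b}$ is a nonzero scalar multiple of $e_{ji}$, so the reductions span $M_{|Q_0|}(k)$. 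For the Max-spectrum surjectivity $\operatorname{Max}B \twoheadrightarrow \operatorname{Max}\tau_\psi(Z(\Lambda))$, part (3) identifies the target with $\operatorname{Max}R$ where $R = \bigcap_i \bar\tau_\psi(e_i\Lambda e_i) \subseteq B$; I would prove surjectivity by factoring the inclusion $R \subseteq B$ through the cycle algebra $S$ and verifying that both $B/S$ and $S/R$ are integral extensions --- the former since every simple matching variable $x_D$ divides $\sigma \in S$, and the latter since, by Lemmas \ref{here2}.2 and \ref{longlist}, the corner-ring images differ from elements of $R$ only by powers of $\sigma$, which already lie in $R$.

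For part (2), when $Q$ is cancellative, Proposition \ref{i=j}.3 makes $\Lambda$ a finitely generated module over its center, so the classification follows from (1) together with \cite[Proposition 2.5]{B7}. In the general uncountable-$k$ case, $\Lambda$ need not be finite over its center, so I would replace the module-finiteness input by the Dixmier lemma: every simple $\Lambda$-module $V$ of at most countable $k$-dimension satisfies $\operatorname{End}_\Lambda(V) = k$, and hence $Z(\Lambda)$ acts on $V$ via a character $\chi:Z(\Lambda) \to k$. By the Max-surjectivity half of (1), $\ker\chi$ is the contraction of some $\mathfrak{b} \in \operatorname{Max}B$; the module $W := (B/\mathfrak{b})^{|Q_0|}$ is simple by generic surjectivity of (1), has central character $\chi$, and realizes the maximal $k$-dimension $|Q_0|$. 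A standard central-character plus Jacobson-density argument, applied to $\Lambda/\operatorname{ann}(V) \hookrightarrow \operatorname{End}_k(V) \cong M_{|Q_0|}(k)$ and the analogous map for $W$, identifies $V$ with $W$.

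The main obstacle will be the Max-spectrum surjectivity in (1), as this requires a precise integrality analysis along $R \subseteq S \subseteq B$ that uses the non-cancellative structure of $\Lambda$ and controls the $\sigma$-discrepancies between different corner rings via the cycle-structure tools of Section \ref{Cycle structure}. A secondary delicate point in (2) is verifying that the $\Lambda$-module $(B/\mathfrak{b})^{|Q_0|}$ produced from the character is simple of dimension exactly $|Q_0|$ for \emph{every} $\mathfrak{b}$ arising this way, not only for generic $\mathfrak{b}$ where the surjectivity onto $M_{|Q_0|}(k)$ is manifest.
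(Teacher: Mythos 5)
Your part (3) and the injectivity and generic-surjectivity halves of part (1) are sound and essentially match the paper's argument (the paper derives the first isomorphism of (3) from the impression property and (\ref{Ziso}) rather than by direct commutator computation, but the two are equivalent). The genuine problem is the route you propose for the Max-spectrum surjectivity in (1): neither of the two extensions you claim to be integral is integral in general. Take the paper's own Example \ref{four examples}(i), where $B = k[x,y,z,w]$, $S = k[xz,xw,yz,yw]$, and $R = k + \sigma S$ with $\sigma = xyzw$. Bigrading $B$ by $\deg x = \deg y = (1,0)$ and $\deg z = \deg w = (0,1)$, every generator of $S$ has degree $(1,1)$, so in any purported monic relation $x^N + \sum_{i<N} s_i x^i = 0$ with $s_i \in S$ the bidegree-$(N,0)$ component is $x^N \neq 0$; hence $x$ is not integral over $S$. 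The same bookkeeping shows $xz$ is not integral over $R$: every monomial of $\sigma f_i (xz)^i$ contains $y$ and $w$, so nothing can cancel $x^N z^N$. The facts you invoke --- that $x_D \mid \sigma$, and that corner-ring images differ from elements of $R$ by powers of $\sigma$ --- give divisibility or at best almost-integrality, never a monic equation. The paper's actual argument avoids integrality entirely: for $\mathfrak{m} \in \operatorname{Max}R$ it shows the extension $B\mathfrak{m}$ is a proper ideal of $B$, chooses a maximal $\mathfrak{b} \supseteq B\mathfrak{m}$, and uses that $\mathfrak{b} \cap R$ is a maximal ideal of $R$ (Nullstellensatz for the finitely generated $k$-algebra $B$) containing $\mathfrak{m}$, hence equal to it.

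Part (2) in the uncountable case is also not closed. Your Dixmier-lemma strategy produces a central character $\chi$ and a maximal ideal $\mathfrak{b}$ contracting to $\ker\chi$, but the "secondary delicate point" you flag is fatal as stated: for non-generic $\mathfrak{b}$ (e.g.\ $\mathfrak{b} \supseteq (\sigma)$) the module $(B/\mathfrak{b})^{|Q_0|}$ need not be simple, and since $\Lambda$ is in general not module-finite over $Z(\Lambda)$, equality of central characters does not force $\operatorname{ann}(V) = \operatorname{ann}(W)$, so the Jacobson-density step does not identify $V$ with $W$. The paper proceeds differently: each $e_i\Lambda e_i$ is a commutative, countably generated $k$-algebra over an uncountable algebraically closed field, so $e_iV$ is $0$ or $k$ for every simple $V$; this pins the maximal dimension vector to $1^{Q_0}$, and the classification of simples of that dimension vector is then quoted from Proposition 3.10 and Theorem 3.11 of \cite{B2}. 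Without that (or an equivalent) input, your argument for (2) does not go through.
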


\begin{proof}
(1) We first show that $\tau_{\psi}$ and $\tau$ are impressions of $\Lambda$ and $\Lambda'$.

(1.i) $\tau_{\psi}: \Lambda \to M_{|Q_0|}(B)$ is injective by Corollary \ref{injective cor}.

(1.ii) For each maximal ideal
$$\mathfrak{b} \in \mathcal{Z}_B\left( \sigma \right)^c \subset \operatorname{Max}B = \mathbb{A}_k^{\left|\mathcal{S}\right|},$$
the composition
\begin{equation} \label{composition lambda}
\Lambda \stackrel{\tau_{\psi}}{\longrightarrow} M_{|Q_0|}(B) \stackrel{1}{\longrightarrow}  M_{|Q_0|}\left(B/\mathfrak{b} \right) \cong M_{|Q_0|}(k)
\end{equation}
is a simple representation of $\Lambda$.
Indeed, when viewed as a vector space diagram on $Q$ of dimension vector $1^{Q_0}$, each arrow is represented by a nonzero scalar.
Thus, since $Q$ has a cycle containing each vertex, the representation is simple.
It follows that the composition (\ref{composition lambda}) is surjective.

(1.iii) Set $R := \tau_{\psi}(Z(\Lambda))$; Claims (1.i) and (1.ii), together with  (\ref{Ziso}), imply that each element of $R$ is product of a polynomial in $B$ and the identity matrix $1_{|Q_0|} \in M_{|Q_0|}(B)$.
For brevity, we will omit $1_{|Q_0|}$ in our expressions.

We claim that the morphism
$$\operatorname{Max}B \rightarrow \operatorname{Max}R, \ \ \ \mathfrak{b} \mapsto \mathfrak{b} \cap R,$$
is surjective.
Indeed, for any $\mathfrak{m} \in \operatorname{Max}R$, $B\mathfrak{m}$ is a proper ideal of $B$.
Thus there is a maximal ideal $\mathfrak{b} \in \operatorname{Max}B$ containing $B\mathfrak{m}$ since $B$ is noetherian.
Furthermore, since $B$ is a finitely generated $k$-algebra and $k$ is algebraically closed, the intersection $\mathfrak{b} \cap R =: \mathfrak{m}'$ is a maximal ideal of $R$.
Whence
$$\mathfrak{m} \subseteq B\mathfrak{m} \cap R \subseteq \mathfrak{b} \cap R = \mathfrak{m}'.$$
But $\mathfrak{m}$ and $\mathfrak{m}'$ are both maximal ideals of $R$.
Thus $\mathfrak{m} = \mathfrak{m}'$.
Therefore $\mathfrak{b} \cap R = \mathfrak{m}$, proving our claim.

(1.iv) By Claims (1.i), (1.ii), and (1.iii), $\tau_{\psi}$ is an impression of $\Lambda$.
It follows that $\tau$ itself is an impression of $\Lambda'$ by letting $\psi: A = A' \to A'$ be the trivial cyclic contraction given by the identity map.

(2.i) If $Q$ is cancellative, then $\tau$ classifies all simple $\Lambda$-module isoclasses of maximal $k$-dimension, by (\ref{Vcong}) and Proposition \ref{i=j}.3.

(2.ii) Now suppose $k$ is uncountable.
Using Claim (2.i), it was shown in \cite[Proposition 3.10 and Theorem 3.11]{B2} that, irrespective of whether $Q$ is cancellative or non-cancellative, $\tau_{\psi}$ classifies all simple $\Lambda$-modules (and $A$-modules) of dimension vector $1^{Q_0}$, up to isomorphism.

We claim that the dimension vector of the simple $\Lambda$-modules of maximal $k$-dimension is $1^{Q_0}$.
Let $V$ be a simple $\Lambda$-module.
Then for each $i \in Q_0$, $e_iV$ is a simple $e_i\Lambda e_i$-module.
But $e_i \Lambda e_i$ is a commutative countably generated $k$-algebra, and $k$ is an uncountable algebraically closed field.
Thus, $e_iV = 0$ or $e_iV \cong k$.
Whence $\operatorname{dim}_k e_iV \leq 1$.
Furthermore, there is a representation of $\Lambda$ where each arrow is represented by $1 \in k$ since we may set each $x$ equal to $1$; this representation is simple of dimension $1^{Q_0}$ since $Q$ has a cycle containing each vertex.
In particular, there is a simple $\Lambda$-module of dimension $1^{Q_0}$, proving our claim.

(3) We claim that
\begin{multline*}
Z(\Lambda) \stackrel{(\textsc{i})}{\cong}
k\left[ \cap_{i \in Q_0} \bar{\tau}_{\psi}\left( e_i \Lambda e_i \right) \right]
\subseteq k\left[ \cup_{i \in Q_0} \bar{\tau}_{\psi}\left( e_i\Lambda e_i \right) \right]
\\ \stackrel{(\textsc{ii})}{=}
k\left[ \cup_{i \in Q'_0} \bar{\tau}\left( e_i\Lambda' e_i \right) \right]
\stackrel{(\textsc{iii})}{=}
k\left[ \cap_{i \in Q'_0} \bar{\tau}\left( e_i\Lambda' e_i \right) \right]
\stackrel{(\textsc{iv})}{\cong} Z(\Lambda').
\end{multline*}
(\textsc{i}) and (\textsc{iv}) hold by Claims (1.i) and (1.ii), together with (\ref{Ziso}).
(\textsc{ii}) holds since the contraction $\psi$ is cyclic, and since $A' = \Lambda'$.
Finally, (\textsc{iii}) holds by Proposition \ref{i=j}.2.
Therefore (\ref{Z cong R S}) holds.
\end{proof}

\begin{Remark} \rm{
In the case of cancellative dimer algebras, the labeling of arrows given in (\ref{taua}) agrees with the labeling of arrows in the toric construction of \cite[Proposition 5.3]{CQ}.
We note, however, that impressions are defined more generally for non-toric algebras and have different implications than the toric construction of \cite{CQ}.
}\end{Remark}

\begin{Corollary} \label{cr}
A dimer algebra $A$ is cancellative if and only if it admits an impression $\tau: A \to M_{|Q_0|}(B)$, where $B$ is an integral domain and $\tau(e_i) = e_{ii}$ for each $i \in Q_0$.
\end{Corollary}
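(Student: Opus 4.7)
The plan is to prove both implications separately; one is an immediate consequence of prior results in the paper, while the other reduces to a short injectivity argument that exploits the integral domain hypothesis on $B$.

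For the $(\Rightarrow)$ direction, suppose $A$ is cancellative. By Theorem~\ref{arehom}, $A$ coincides with its homotopy algebra $\Lambda$, so the identity map serves as a (trivial) cyclic contraction $\psi: A \to A$. Theorem~\ref{impression prop}(1) then directly supplies the impression
$$\tau: A = \Lambda \longrightarrow M_{|Q_0|}(B), \qquad B = k[x_D \mid D \in \mathcal{S}],$$
where $B$ is a polynomial ring (hence an integral domain), and $\tau(e_i) = e_{ii}$ holds by the definition (\ref{taua}). So all the required properties are already packaged in the earlier theorem.

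For the $(\Leftarrow)$ direction, assume such an impression exists and, for contradiction, that $A$ is non-cancellative. Then there are distinct paths $p, q \in A$ and a path $r$ with $rp = rq \neq 0$ (the case $pr = qr \neq 0$ is entirely symmetric). Since $rp = rq$ is a single nonzero path, it has well-defined endpoints, which forces $\operatorname{t}(p) = \operatorname{t}(q)$ and $\operatorname{h}(p) = \operatorname{h}(q) = \operatorname{t}(r)$. Because $\tau(e_i) = e_{ii}$, every path $s \in e_j A e_i$ has image of the form $\bar\tau(s)\,e_{ji}$ for some single scalar $\bar\tau(s) \in B$. Applying $\tau$ to the relation $rp = rq$ and using $e_{ij}e_{kl} = \delta_{jk}e_{il}$ yields $\bar\tau(r)\bar\tau(p) = \bar\tau(r)\bar\tau(q) \neq 0$. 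Since $B$ is an integral domain and $\bar\tau(r) \neq 0$, cancellation gives $\bar\tau(p) = \bar\tau(q)$, whence $\tau(p) = \tau(q)$ (both lie in the same matrix slot). This contradicts the injectivity of $\tau$, which is built into the definition of impression.

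There is no serious obstacle here: the forward implication is fully assembled from Theorems~\ref{arehom} and \ref{impression prop}, and the reverse implication rests entirely on the fact that the integral domain hypothesis on $B$ allows the cancellation of the scalar $\bar\tau(r)$. The only point requiring care is the observation that $rp = rq \neq 0$ automatically forces $p$ and $q$ to share both endpoints, so that their $\tau$-images occupy the same matrix entry and can be compared as scalars in $B$.
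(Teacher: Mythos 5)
Your proposal is correct and follows essentially the same route as the paper: the forward direction is obtained by taking the identity map as the trivial cyclic contraction and invoking Theorem \ref{impression prop}.1 (the paper dispatches it with the one-line "the converse holds by Theorem \ref{impression prop}"), and the reverse direction is exactly the paper's argument of applying $\tau$ to $rp = rq \neq 0$, cancelling the nonzero scalar $\bar{\tau}(r)$ in the integral domain $B$, and concluding $p = q$ from injectivity of $\tau$. The only cosmetic difference is that you phrase the reverse direction as a proof by contradiction while the paper argues directly, and you spell out the endpoint bookkeeping that the paper leaves implicit.
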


\begin{proof}
Suppose $A$ admits an impression $\tau: A \to M_{|Q_0|}(B)$, where $B$ is an integral domain and $\tau(e_i) = e_{ii}$ for each $i \in Q_0$.
Consider paths $p,q,r$ satisfying $pr = qr \not = 0$.
Then
$$\overbar{p}  \overbar{r} = \overbar{pr} = \overbar{qr} = \overbar{q}  \overbar{r}.$$
Thus $\overbar{p} = \overbar{q}$ since $B$ is an integral domain.
Whence
$$\tau(p) = \overbar{p}e_{\operatorname{h}(p),\operatorname{h}(r)} = \overbar{q}e_{\operatorname{h}(p),\operatorname{h}(r)} = \tau(q).$$
Therefore $p = q$ by the injectivity of $\tau$.

The converse holds by Theorem \ref{impression prop}.
\end{proof}

Recall that an algebra $A$ is prime if for all $a,b \in A$, $aAb = 0$ implies $a = 0$ or $b = 0$, that is, the zero ideal is a prime ideal.

\begin{Corollary} \label{cancellative prime}
Ghor algebras are prime.
\end{Corollary}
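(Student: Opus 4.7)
The plan is to exploit the injective impression $\tau_\psi \colon \Lambda \hookrightarrow M_{|Q_0|}(B)$ established in Corollary \ref{injective cor} and Theorem \ref{impression prop}.1, where $B = k[x_D \mid D \in \mathcal{S}']$ is a polynomial ring in the simple matchings of $A'$, and hence an integral domain. Under this embedding, $\tau_\psi(e_i) = e_{ii}$ and each element $p \in e_j \Lambda e_i$ maps to a scalar multiple $\bar\tau_\psi(p)\, e_{ji}$ with $\bar\tau_\psi(p) \in B$. Crucially, if $c$ is an actual path in $Q$, then $\bar\tau_\psi(c)$ is a product of variables $x_D$, hence a nonzero monomial in $B$.

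To establish primeness, I would take nonzero $a, b \in \Lambda$ and construct an element $c \in \Lambda$ with $a c b \neq 0$. Since $a \neq 0$, there exist vertices $i, j \in Q_0$ with $\bar a_{ji} := \bar\tau_\psi(e_j a e_i) \neq 0$ in $B$; similarly $\bar b_{lk} := \bar\tau_\psi(e_l b e_k) \neq 0$ for some $k, l \in Q_0$. Because $Q$ is a dimer quiver on a torus, it is strongly connected (every arrow lies in a unit cycle, which permits two-way traversal between its vertices, and the unit cycles tile the torus), so there exists an honest path $c$ in $Q$ from $l$ to $i$.

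A direct matrix-entry computation then gives
\[
\tau_\psi(acb)_{j,k} \;=\; \bar a_{ji}\cdot\bar\tau_\psi(c)\cdot\bar b_{lk},
\]
which is nonzero since all three factors are nonzero elements of the integral domain $B$. Hence $acb \neq 0$ by the injectivity of $\tau_\psi$, so $a\Lambda b \neq 0$, and therefore $\Lambda$ is prime.

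The entire argument is essentially matrix-entry bookkeeping once the impression is in hand; the only non-formal ingredient is strong connectivity of $Q$, which I anticipate as the main (and minor) obstacle. It is immediate from the dimer structure, but must be invoked to guarantee that the connecting path $c$ in the middle slot actually exists.
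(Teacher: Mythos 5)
Your proposal is correct and follows essentially the same route as the paper: embed $\Lambda$ into $M_{|Q_0|}(B)$ via the injective impression $\tau_{\psi}$, reduce to nonzero vertex components, insert a connecting path supplied by the connectivity of the dimer quiver, and conclude from the fact that $B$ is an integral domain. The only cosmetic difference is that you spell out the reduction of general nonzero elements to nonzero corner components, which the paper compresses into ``it suffices to suppose.''
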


\begin{proof}
We claim that for nonzero elements $p,q \in A$, we have $qAp \not = 0$.
It suffices to suppose that
$$p \in e_{j}Ae_{i} \ \ \text{ and } \ \ q \in e_{\ell}Ae_{k}.$$
Since $Q$ is a dimer quiver, there is a path $r$ from $j$ to $k$.
Furthermore, the polynomials $\overbar{p}$, $\overbar{q}$, and $\overbar{r}$ are nonzero since $\tau$ is injective, by Theorem \ref{impression prop}.
Thus the product $\overbar{qrp} = \overbar{q} \overbar{r} \overbar{p} \in B$ is nonzero since $B$ is an integral domain.
Therefore $qrp$ is nonzero.
\end{proof}

\begin{Example}  \label{four examples} \rm{
A dimer algebra $A = kQ/I$ is \textit{square} if the underlying graph of its cover $Q^+$ is a square grid graph with vertex set $\mathbb{Z} \times \mathbb{Z}$, and with at most one diagonal edge in each unit square.
Square dimer algebras exhibit a remarkable interaction between their torus embedding and their representation theory.
Specifically, such an algebra $A$ admits an impression $\tau$ where for each arrow $a \in Q^+_1$, $\bar{\tau}(a)$ is the monomial corresponding to the orientation of $a$ given in Figure \ref{labels} \cite[Theorem 3.7]{B7}.
In this case, $B = k[x,y,z,w]$ is the polynomial ring in four variables.
If $Q$ only possesses three arrow orientations, say up, left, and right-down, then we may label the respective arrows by $x$, $y$, and $z$, and obtain an impression with $B = k[x,y,z]$.
In either case, $A$ is cancellative by Corollary \ref{cr}.

Consider the cyclic contractions $\psi: A \to A'$ given in Figure \ref{deformation figure}.
In each example, $A'$ is a square dimer algebra (with the 2-cycles removed from $Q'$).
By Theorem \ref{impression prop}, the cycle algebra $S$ and center $R$ of the ghor algebra $\Lambda = kQ/\ker \eta$ are respectively
$$\begin{array}{cclcl}
\text{(i):} & \ \ & S = k\left[ xz, xw, yz, yw \right] & & R = k + \sigma S \\
\text{(ii):} & \ \ & S = k\left[ xz, xw, yz, yw \right] & & R = k + (x^2zw, y^2zw, \sigma)S \\
\text{(iii):} & \ \ & S = k\left[ xz, yz, xw, yw \right] & & R = k + (xz, yz)S \\
\text{(iv):} & \ \ & S = k\left[ xz, yw, x^2w^2, y^2z^2 \right] & & R = k + (yw, x^2w^2, y^2z^2)S
\end{array}$$
}\end{Example}

\begin{figure}
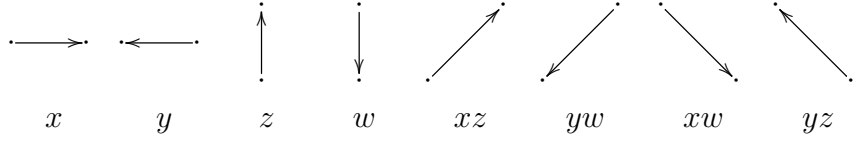

$$\begin{array}{cccccccc}
\xy
(-5,0)*{\cdot}="1";(5,0)*{\cdot}="2";{\ar@{->}"1";"2"};
\endxy
&
\xy
(-5,0)*{\cdot}="1";(5,0)*{\cdot}="2";{\ar@{<-}"1";"2"};
\endxy
&
\xy
(0,-5)*{\cdot}="1";(0,5)*{\cdot}="2";{\ar@{->}"1";"2"};
\endxy
&
\xy
(0,5)*{\cdot}="1";(0,-5)*{\cdot}="2";{\ar@{->}"1";"2"};
\endxy
&
\xy
(-5,-5)*{\cdot}="1";(5,5)*{\cdot}="2";{\ar@{->}"1";"2"};
\endxy
&
\xy
(-5,-5)*{\cdot}="1";(5,5)*{\cdot}="2";{\ar@{<-}"1";"2"};
\endxy
&
\xy
(-5,5)*{\cdot}="1";(5,-5)*{\cdot}="2";{\ar@{->}"1";"2"};
\endxy
&
\xy
(-5,5)*{\cdot}="1";(5,-5)*{\cdot}="2";{\ar@{<-}"1";"2"};
\endxy
\\
\ \ \ x \ \ \ & \ \ \ y \ \ \ & \ \ \ z \ \ \ & \ \ \ w \ \ \ & \ \ \ xz \ \ \ & \ \ \ yw \ \ \ & \ \ \ xw \ \ \ & \ \ \ yz \ \ \
\end{array}$$
\caption{A labeling of arrows in the covering quiver of a square dimer algebra that specifies an impression.}
\label{labels}
\end{figure}

\begin{Proposition} \label{finally!}
Suppose $A$ is cancellative, and let $p \in \mathcal{C}$ be a nontrivial cycle.
Then $p \in \hat{\mathcal{C}}$ if and only if $\sigma \nmid \overbar{p}$.
In particular, $\tau(Z) \subseteq B$ is generated over $k$ by $\sigma$ and a set of monomials in $B$ not divisible by $\sigma$.
\end{Proposition}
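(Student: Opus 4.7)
The plan is to assemble the two directions of the equivalence from results already in place, and then to extract the ``in particular'' part by combining the structure of the vertex corner rings with the fact that $\bar{\tau}$ sends each path to a monomial.

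For the forward implication, suppose $p \in \hat{\mathcal{C}}$. Since $p$ is a nontrivial cycle and $\hat{\mathcal{C}}^0 = \emptyset$, we have $p \in \hat{\mathcal{C}}^u$ for some $u \in \mathbb{Z}^2 \setminus 0$. Because $A$ is cancellative, hypothesis (i) of Proposition~\ref{circle 1} holds, so $\sigma \nmid \overbar{p}$ by Proposition~\ref{circle 1}.1. For the reverse implication, suppose $\sigma \nmid \overbar{p}$. Since $A$ is cancellative, it certainly has no non-cancellative pair in which one path is a vertex, so Lemma~\ref{here3'} applies; the contrapositive of Lemma~\ref{here3'}.3 then gives $p \in \hat{\mathcal{C}}$.

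For the second assertion, fix any vertex $i \in Q_0$. By Proposition~\ref{i=j}.1, the map $Z \to Ze_i = e_iAe_i$, $z \mapsto ze_i$, is a $k$-algebra isomorphism, and under $\tau$ the image of $z \in Z$ is $\bar{\tau}(ze_i)$ times the identity matrix $1_{|Q_0|}$; thus $\tau(Z)$ (identified with its scalar part in $B$) equals $\bar{\tau}(e_iAe_i)$. By Lemma~\ref{generated by}, $e_iAe_i$ is generated as a $k$-algebra by $\sigma_i$ together with the cycles in $\hat{\mathcal{C}}_i$. Applying $\bar{\tau}$, $\sigma_i$ maps to $\sigma = \prod_{D \in \mathcal{S}} x_D$, and each $c \in \hat{\mathcal{C}}_i$ maps to $\bar{\tau}(c) \in B$. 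Because $\tau$ is an algebra homomorphism on $e_iAe_i$ and each arrow is sent by (\ref{taua}) to a monomial of the form $\prod_{a \in D \in \mathcal{S}} x_D$ times a matrix unit, the image $\bar{\tau}(c)$ is a monomial in the $x_D$. By the equivalence just proved, $\sigma \nmid \bar{\tau}(c)$ for each $c \in \hat{\mathcal{C}}_i$. This exhibits $\tau(Z)$ as generated over $k$ by $\sigma$ and a set of monomials not divisible by $\sigma$.

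There is no real obstacle here; the only things to be careful about are (a) the case division $u = 0$ versus $u \neq 0$ in the forward direction (handled by $\hat{\mathcal{C}}^0 = \emptyset$), (b) verifying the hypothesis on non-cancellative vertex pairs required by Lemma~\ref{here3'} (which is automatic under cancellativity), and (c) making the identification $\tau(Z) = \bar{\tau}(e_iAe_i) \subseteq B$ explicit via Proposition~\ref{i=j}.1 before invoking Lemma~\ref{generated by}.
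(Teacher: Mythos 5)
Your proof is correct and follows the paper's own (very terse) proof exactly: the forward direction is Proposition \ref{circle 1}.1 (applicable since cancellativity gives hypothesis (i)), and the reverse direction is the contrapositive of Lemma \ref{here3'}.3, whose hypothesis is automatic for a cancellative algebra. The paper omits any argument for the ``in particular'' clause; your derivation of it from Proposition \ref{i=j} and Lemma \ref{generated by}, using that $\bar{\tau}$ sends cycles to monomials, is a correct and welcome completion.
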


\begin{proof}
($\Rightarrow$) Proposition \ref{circle 1}.1.

($\Leftarrow$) Lemma \ref{here3'}.3.
\end{proof}

\appendix
\section{A brief account of Higgsing with quivers} \label{Higgsing}

\begin{center}\textit{Quiver gauge theories}\end{center}

According to string theory, our universe is 10 dimensional.\footnote{Thanks to Francesco Benini, Mike Douglas, Peng Gao, Mauricio Romo, and James Sparks for discussions on the physics of non-cancellative dimers.}\textsuperscript{,}\footnote{More correctly, weakly coupled superstring theory requires 10 dimensions.}
In many string theories our universe has a product structure $M \times Y$, where $M$ is our usual 4-dimensional spacetime and $Y$ is a 6-dimensional compact Calabi-Yau variety.

Let us consider a special class of gauge theories called `quiver gauge theories', which can often be realized in string theory.\footnote{Here we are considering theories with $\mathcal{N} = 1$ supersymmetry.}
The input for such a theory is a quiver $Q$, a superpotential $W$, a dimension vector $d \in \mathbb{N}^{Q_0}$, and a stability parameter $\theta \in \mathbb{R}^{Q_0}$.

Let $I$ be the ideal in $\mathbb{C}Q$ generated by the partial derivatives of $W$ with respect to the arrows in $Q$.
These relations (called `F-term relations') are classical equations of motion from a supersymmetric Lagrangian with superpotential $W$.\footnote{More correctly, the F-term relations plus the D-term relations imply the equations of motion.}
Denote by $A$ the quiver algebra $\mathbb{C}Q/I$.

According to these theories, the space $X$ of $\theta$-stable representation isoclasses of dimension $d$ is an affine chart on the compact Calabi-Yau variety $Y$.
The `gauge group' of the theory is the isomorphism group (i.e., change of basis) for representations of $A$.

Physicists view the elements of $A$ as fields on $X$.
More precisely, $A$ may be viewed as a noncommutative ring of functions on $X$, where the evaluation of a function $f \in A$ at a point $p \in X$ (i.e., representation $p$) is the matrix $f(p) := p(f)$ (up to isomorphism).

\begin{center}
\textit{Vacuum expectation values}
\end{center}

Given a path $f \in A$ and a representation $p \in X$, denote by $f\left(\overbar{p}\right)$ the matrix representing $f$ in the vector space diagram on $Q$ associated to $p$.

A field $f \in A$ is `gauge-invariant' if $f(p) = f(p')$ whenever $p$ and $p'$ are isomorphic representations (i.e., they differ by a `gauge transformation').
If $f$ is a path, then $f$ will necessarily be a cycle in $Q$.

The `vacuum expectation value' of a field is its expected (average) energy in the vacuum (similar to rest mass), and is abbreviated `vev'.
In our case, the vev of a path $f \in A$ at a point $p \in X$ is the matrix $f\left(\overbar{p}\right)$, which is just the expected energy of $f$ in $M \times \{ p \}$.

\begin{center}
\textit{Higgsing}
\end{center}

Spontaneous symmetry breaking is a process where the symmetry of a physical system decreases, and a new property (typically mass) emerges.

For example, suppose a magnet is heated to a high temperature.
Then all of its molecules, which are each themselves tiny magnets, jostle and wiggle about randomly.
In this heated state the material has rotational symmetry and no net magnet field.
However, as the material cools, one molecule happens to settle down first.
As the neighboring molecules settle down, they align themselves with the first molecule, until all the molecules settle down in alignment with the first.\footnote{More precisely, there are domains of magnetization.}
The orientation of the first settled molecule then determines the direction of magnetization for the whole material, and the material no longer has rotational symmetry.
One says that the rotational symmetry of the heated magnet was spontaneously broken as it cooled, and a global magnetic field emerged.\footnote{This is an example of `global' symmetry breaking, meaning the symmetry is physically observable.}

Higgsing is a way of using spontaneous symmetry breaking to turn a quantum field theory with a massless field and more symmetry into a theory with a massive field and less symmetry.
Here mass (vev's) takes the place of magnetization, gauge symmetry (or the rank of the gauge group) takes the place of rotational symmetry, and energy scale (RG flow) takes the place of temperature.

The recent discovery of the Higgs boson at the Large Hadron Collider is another example of Higgsing.\footnote{This is an example `gauge' symmetry breaking, meaning the symmetry is not an actual observable symmetry of a physical system, but only an artifact of the math used to describe it (like a choice of basis for the matrix of a linear transformation).}

\begin{center}
\textit{Higgsing in quiver gauge theories}
\end{center}

We now give our main example.
Suppose an arrow $a$ in a quiver gauge theory with dimension $1^{Q_0}$ is contracted to a vertex $e$.
We make two observations:
\begin{enumerate}
  \item the rank of the gauge group drops by one since the head and tail of $a$ become identified as the single vertex $e$;
  \item $a$ has zero vev at any representation where $a$ is represented by zero, while $e$ can never have zero vev since it is a vertex, and $X$ only consists of representation isoclasses with dimension $1^{Q_0}$.
\end{enumerate}
We therefore see that contracting an arrow to a vertex is a form of Higgsing in quiver gauge theories with dimension $1^{Q_0}$.\footnote{This is another example of gauge symmetry breaking.}

In the context of a 4-dimensional $\mathcal{N} = 1$ quiver gauge theory with quiver $Q$, the Higgsing we consider in this article is related to RG flow.
We start with a non-superconformal (strongly coupled) quiver theory $Q$ which admits a low energy effective description, give nonzero vev's to a set of bifundamental fields $Q_1^*$, and obtain a new theory $Q'$ that lies at a superconformal fixed point.

\begin{center}\textit{The mesonic chiral ring and the cycle algebra}\end{center}

The cycle algebra $S$, introduced in \cite{B2}, is similar to the mesonic chiral ring in the corresponding quiver gauge theory.
In such a theory, the mesonic operators, which are the gauge invariant operators, are generated by the cycles in the quiver.
If the gauge group is abelian, then the dimension vector is $1^{Q_0}$.
In the case of a dimer theory with abelian gauge group, two disjoint cycles may share the same $\bar{\tau}\psi$-image, but take different values on a point of the vacuum moduli space.
These cycles would then be distinct elements in the mesonic chiral ring, although they would be identified in the cycle algebra $S$; see \cite[Remark 3.17]{B2}.

\ \\
\textbf{Acknowledgments.}
The author thanks an anonymous referee for their careful reading and helpful comments. 
The author was supported by the Austrian Science Fund (FWF) grants P34854 and P30549-N26.
Part of this article is also based on work supported by the Simons Foundation and the Heilbronn Institute for Mathematical Research while the author held postdoctoral positions at the Simons Center for Geometry and Physics at Stony Brook University and the University of Bristol.
%
%
%
%

\bibliographystyle{hep}
\def\cprime{$'$} \def\cprime{$'$}

\end{document}